\newcommand{\cB}{\mathcal{B}}
\newcommand{\Z}{\mathbb{Z}}
\newcommand{\R}{\mathbb{R}}
\newdimen\R
\definecolor{lavenderblue}{rgb}{0.8, 0.8, 1.0}
\definecolor{lightmauve}{rgb}{0.86, 0.82, 1.0}
\definecolor{brightlavender}{rgb}{0.75, 0.58, 0.89}
\newtheorem{theorem}{Theorem}[section]
\newtheorem{lemma}[theorem]{Lemma}
\newtheorem{proposition}[theorem]{Proposition}
\newtheorem{corollary}[theorem]{Corollary}
\newtheorem*{corollary*}{Corollary}
\theoremstyle{definition}
\newtheorem{definition}[theorem]{Definition}
\theoremstyle{remark}
\newtheorem{example}[theorem]{Example}
\newtheorem{remark}[theorem]{Remark}
\newmdtheoremenv{boxedaside}{Aside}
\definecolor{todocolor}{HTML}{454545}
\definecolor{lav}{HTML}{BBBBEE}
\definecolor{lav}{HTML}{9393ed}
\newcommandx{\margincomment}[2][1=]{\todo[linecolor=lav,backgroundcolor=lav!25,bordercolor=lav,#1]{#2}}
\newcommand{\tikzs}{\begin{center}\begin{tikzcd}}
\newcommand{\tikze}{\end{tikzcd}\end{center}}
\newcommand{\Hom}{\text{Hom}}
\def\sqtwoL (#1,#2){
  \draw (#1,#2) .. controls (#1-1,#2+1) .. (#1,#2+2);
}
\def\sqtwoR (#1,#2){
  \draw (#1,#2) .. controls (#1+1,#2+1) .. (#1,#2+2);
}
\def \sqtwoCR (#1,#2){
   \draw (#1,#2) .. controls (#1+0.75,#2+.5) and (#1+1.125,#2+2) .. (#1+1.5,#2+2);
}
\def \sqtwoCL (#1,#2){
   \draw (#1,#2) .. controls (#1-0.75,#2+.5) and (#1-1.125,#2+2)  .. (#1-1.5,#2+2);
}
\def \sqone (#1,#2){
  \draw (#1, #2) -- (#1, #2+1);
}
\newcommand{\Po}{\mathbb{P}^1}
\definecolor{Mahogany}{rgb}{0.75,0.25,0}
\newcommand\smvee{\raise0.9ex\hbox{$\scriptscriptstyle\vee$}}
\newcommand{\Aut}{{\rm Aut}}
\newcommand{\Sym}{{\rm Sym}}
\newcommand{\Jac}{{\rm Jac}}
\newcommand{\M}{\mathcal{M}}
\newcommand{\B}{\mathcal{B}}
\newcommand{\Bb}{\overline{\mathcal{B}}}
\newcommand{\OO}{\mathcal{O}}
\newcommand{\mb}{\overline{M}}
\newcommand{\mbr}{\mb^{\red}}
\newcommand{\hti}{\widetilde{{\mathcal{H}}}}
\newcommand{\HH}{\mathcal{H}}
\newcommand{\mt}{\widetilde{M}}
\newcommand{\mtc}{\widetilde{M}^{\circ}}
\newcommand{\D}{\mathcal{D}}
\newcommand{\Du}{\Xi}
\newcommand{\Dr}{\Delta}
\newcommand{\Duct}{\delta}
\newcommand{\CC}{\mathcal{C}}
\newcommand{\LL}{\mathbb{L}}
\newcommand{\Ext}{{\rm Ext}}
\newcommand{\Spec}{{\rm Spec}}
\newcommand{\br}{{\rm br}}
\newcommand{\ub}{{\rm ub}}
\newcommand{\red}{{\rm red}}
\newcommand{\Mb}{\overline{\mathcal{M}}}
\newcommand{\Pic}{\textup{Pic}}
\title{The $p$-rank stratification of the moduli space of double covers of a fixed elliptic curve} 
\author{Kevin Chang, Du\v{s}an Dragutinovi\'{c}, Steven R. Groen, Yuxin Lin, \\ Natalia Pacheco-Tallaj, Deepesh Singhal}
\date{}
\begin{document}

\maketitle
\begin{abstract}
    In this paper we investigate the $p$-rank stratification of the moduli space of curves of genus $g$ that admit a double cover to a fixed elliptic curve $E$ in characteristic $p>2$. We show that the closed $p$-rank strata of this moduli space are equidimensional of the expected dimension. We also show the existence of a smooth double cover of $E$ of all the possible values of the $p$-rank on this moduli space.
\end{abstract}

\section{Introduction}
\label{sec:intro}

Let $k$ be an algebraically closed field of characteristic $p>2$ throughout this paper. We study the $p$-rank stratification of the moduli space of double covers of a fixed elliptic curve over $k$. In characteristic $p$, abelian varieties carry several interesting discrete invariants, such as the $p$-rank or the Newton polygon. The corresponding invariants for curves are defined to be those of their Jacobians. These invariants induce stratifications on the moduli spaces of abelian varieties, and consequently on the moduli spaces of curves.

In this paper, we focus on the $p$-rank of curves. The $p$-rank of an abelian variety $A$ over $k$ is the integer $f_A:=\dim(\text{Hom}(\mu_p, A))$ such that $\#A[p](k)=p^{f_A}$; it satisfies $0 \leq f_A \leq \dim(A)$. The $p$-rank of a curve is defined to be the $p$-rank of its Jacobian. By \cite{oda}, the $p$-rank of a curve $C$ over $k$ also equals the semisimple rank of the Frobenius operator on $H^1(C, \mathcal{O}_C)$. The $p$-rank is invariant under isogenies, and the Newton stratification refines the $p$-rank stratification.

The $p$-rank stratifications of various moduli spaces have been studied extensively. Given a moduli space $\mathcal{M}$ of curves or abelian varieties, denote by $V_f (\mathcal{M})$ the \emph{closed $p$-rank $f$ stratum}, consisting of objects whose $p$-rank is at most $f$. It is known (\cite[Lemmas 1.4 and 1.6]{Oort1974} and \cite[Theorem~7.(1)]{Koblitz}) that $V_f (\mathcal{A}_g)$ is pure of codimension $g-f$. It is a subtler question how the $p$-rank stratification intersects the Torelli locus. It is proved in \cite[(2.3)]{faber2004complete} that closed $p$-rank strata of~$\overline{\mathcal{M}}_g$ are pure and have the expected codimension $g-f$. As a consequence, the generic point of any component of $V_f (\overline{\mathcal{M}}_g)$ corresponds to a smooth curve. Similar investigations for certain covers of the projective line followed. In \cite{glass2005hyperelliptic}, it is shown that the same holds for the closed $p$-rank strata of $\overline{\mathcal{H}}_g$, the moduli space of hyperelliptic curves of genus $g$. The $p$-rank stratification of the moduli space of \emph{Artin-Schreier covers}, separable degree $p$ covers of the projective line, is explored in \cite{PriesAS}. The $p$-rank strata of the moduli space of degree $l$ cyclic covers of $\mathbb{P}^1$, where $l\neq p$ is prime, are studied in \cite{Ekin2022cyclic}. In \cite[(6.1)]{faber2004complete}, $p$-ranks of unramified double covers of genus $2$ curves are studied. In \cite{Bouw_2001}, ramified covers of degree coprime to $p$ of generic $r$-pointed curves are studied for sufficiently large $p$. 

A natural next step is to investigate the $p$-ranks of potentially ramified double covers of a fixed elliptic curve $E = (E, O_E)$ over $k$. Note that the $p$-rank $f_E$ of $E$ is either $f_E = 0$ or $f_E = 1$, depending on whether $E$ is supersingular or ordinary.  
Let $\mathcal{B}_{E,g}$ denote the moduli space of smooth genus-$g$ curves over $k$ that admit a $\Z/2\Z$-cover to $E$, and let $\overline{\mathcal{B}}_{E,g}$ denote its Deligne-Mumford compactification. Furthermore, let $\mathcal{B}_g$ denote the bielliptic locus and $\overline{\mathcal{B}}_g$ its Deligne-Mumford compactification. Our main result is that the closed $p$-rank strata of $\overline{\mathcal{B}}_{E,g}$ and $\overline{\mathcal{B}}_g$ are pure and have the expected (co)dimension.

\begin{theorem} \label{thm:mainintro}
Let $g \geq 2$ and $p>2$. For any $f_E \leq f \leq g - 1 + f_E$, the locus $V_f(\Bb_{E, g})$ is pure of dimension $g - 2 + f - f_E$. As a consequence, $V_f(\overline{\mathcal{B}}_g)$ is pure of dimension $g - 2 + f$, for any $0 \leq f \leq g$.
\end{theorem}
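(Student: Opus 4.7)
The plan is to prove the theorem in two parts: a lower bound on $\dim V_f(\Bb_{E,g})$ via purity of the $p$-rank, and a matching upper bound (together with existence of smooth bielliptic covers of each $p$-rank) by induction on $g$ using a boundary clutching construction. For a degree-$2$ cover $C \to E$ in characteristic $p > 2$, the Jacobian decomposes up to isogeny as $\Jac(C) \sim E \times P$ where $P$ is the Prym variety of dimension $g-1$, giving $f_C = f_E + f_P \leq g-1+f_E$. In particular $V_{g-1+f_E}(\Bb_{E,g}) = \Bb_{E,g}$, which has dimension $2g-3$. Applying the purity-of-$p$-rank theorem (as in \cite[Lemma~1.6]{Oort1974}) to the family of Jacobians over $\Bb_{E,g}$ shows that every irreducible component of $V_f(\Bb_{E,g})$ has codimension at most $1$ in any component of $V_{f+1}(\Bb_{E,g})$ containing it; iterating downward from $f = g-1+f_E$ yields the lower bound $\dim V_f(\Bb_{E,g}) \geq g-2+f-f_E$ on every component.

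For the upper bound together with existence I would induct on $g \geq 2$. The base case $g=2$ is direct: $\Bb_{E,2}$ is one-dimensional and its $V_{f_E}$-stratum is identified via the Prym map with the supersingular locus of $\mathcal{A}_1$, which is nonempty and zero-dimensional. For the inductive step, consider the clutching morphism that sends a smooth bielliptic cover $\tilde\pi\colon \tilde C \to E$ of genus $g-1$ together with a generic unramified point $x \in E$ (so $\tilde\pi^{-1}(x) = \{p,q\}$) to the irreducible nodal curve $C_0$ obtained by identifying $p$ with $q$; this makes $C_0 \to E$ into a stable admissible double cover of arithmetic genus $g$, and the short exact sequence $0 \to \mathbb{G}_m \to \Jac(C_0) \to \Jac(\tilde C) \to 0$ shows $f_{C_0} = f_{\tilde C} + 1$. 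For $f \geq f_E + 1$, the induction hypothesis provides a family of smooth genus-$(g-1)$ bielliptic covers of $p$-rank exactly $f-1$, and clutching produces a corresponding family inside $V_f(\partial \Bb_{E,g})$. Since $\partial \Bb_{E,g}$ is of codimension $1$ while Step~1 forces $\dim V_f(\Bb_{E,g}) \geq g-2+f-f_E$, this boundary family cannot account for all of $V_f(\Bb_{E,g})$, so $V_f(\Bb_{E,g})$ contains a smooth bielliptic cover of $p$-rank exactly $f$. Combined with purity and downward induction on $f$, strict containment $V_{f-1} \subsetneq V_f$ at each step forces every component of $V_f(\Bb_{E,g})$ to have dimension exactly $g-2+f-f_E$.

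The main obstacle is the extremal case $f = f_E$, which cannot be reached by the clutching above (that construction raises the $p$-rank by $1$). For this one must exhibit a smooth bielliptic cover whose Prym is entirely supersingular; I would handle it by a deeper boundary construction — for instance, an admissible cover whose normalization is a chain of supersingular elliptic curves glued compatibly with a map to $E$ — producing an explicit boundary point of $p$-rank $f_E$, and then smoothing using the purity bound from Step~1 together with a dimension count against $\partial\Bb_{E,g}$. Finally, the consequence for $\Bb_g$ follows by fibering over $\overline{\mathcal{M}}_{1,1}$: for each $f$, the contribution from $f_E = 1$ (ordinary $E$ with Prym of $p$-rank $f-1$) and from $f_E = 0$ (supersingular $E$, of codimension $1$ in $\overline{\mathcal{M}}_{1,1}$, with Prym of $p$-rank $f$) both yield total dimension $g-2+f$ in $V_f(\Bb_g)$, giving the stated purity and dimension.
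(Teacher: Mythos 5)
Your lower-bound argument via purity and the Prym decomposition is correct and matches the paper. However, the upper bound and the existence-of-smooth-covers step have genuine gaps, and these are exactly where the paper's harder work lies.

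\textbf{The upper bound does not follow from ``strict containment.''} You claim that strict inclusions $V_{f-1}\subsetneq V_f$ plus purity force $\dim V_f = g-2+f-f_E$. But $V_f$ is not irreducible in general, so $V_{f-1}\subsetneq V_f$ does not imply $\dim V_{f-1}<\dim V_f$: a component of $V_{f-1}$ could coincide with a whole component of $V_f$. Purity only gives a \emph{lower} bound. The paper's upper bound is a separate argument: since the coarse space of $\B_{E,g}$ is quasi-affine (Remark~\ref{rem: meg is quasi-affine}), $\B_{E,g}$ has no complete subvarieties, so every component $W$ of $V_f(\Bb_{E,g})$ must meet the boundary $\Delta$; then $\dim W \le \dim(W\cap\Delta)+1$, and one bounds $\dim(W\cap\Delta)$ inductively using dimension counts on \emph{all} boundary strata $\Du_{g_1,g_2-1}$ and $\Dr_{g_1,g_2}$ (Lemmas~\ref{lemma:prankbdry_res1}--\ref{lemma:prankbdry_res2}), not just the single $\Du_{g-1,0}$ you use.

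\textbf{Your smoothing argument fails when $E$ is supersingular.} You argue that because the clutched boundary family has dimension $g-3+f-f_E$ while purity forces $\dim W\ge g-2+f-f_E$, the component $W$ must contain a smooth curve. This is not valid: $W$ could be a different component of $V_f$, entirely contained in the boundary. For $f_E=0$ this actually happens — the paper's Corollary~\ref{cor:p-rank}(\ref{itm:2}) exhibits, for each $f\le g-2$, a component of $V_f(\Bb_{E,g})$ consisting entirely of singular curves, sitting inside $\Duct_{1,g-2}$. The naive boundary dimension count even fails here because $\dim V_f(\Duct_{1,g-2})=g-2+f$ equals the target dimension; the paper resolves this by passing through the Torelli morphism $j\colon\Mb_g\to\tilde{\cA}_g$ and observing that $\dim j(V_f(\Duct_{1,g-2}))=g-3+f$ since $j$ forgets the pair of clutching points. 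That argument (together with the quasi-affineness of $\B_{E,g}$ to ensure no complete subvariety of $j(\Gamma)$ exists) is the crux of the $f_E=0$ case and is entirely absent from your proposal.

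\textbf{Minor issues.} Your $f=f_E$ case is flagged but not resolved. Your base case $g=2$ asserts an identification with the supersingular locus of $\cA_1$ that is not quite the relevant statement; the paper instead uses quasi-affineness of the ordinary Ekedahl-Oort stratum to show $V_{f_E}(\Bb_{E,2})\ne\emptyset$, and must separately invoke Ekedahl's theorem (no smooth superspecial genus-$2$ curves in characteristic $3$) to exclude a smooth curve when $(p,g,f)=(3,2,0)$. Finally, for the $\Bb_g$ consequence, you only sketch a dimension count over $\Mb_{1,1}$; the paper gives a real proof (Corollary~\ref{cor:biell_prank}) that again relies on the missing upper-bound machinery.
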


Furthermore, our geometric insights lead us to the following corollary.

\begin{corollary} The following statements hold for any $g\geq 2$ and any prime number $p>2$.
\begin{enumerate}
    \item For any $0\leq f \leq g - 2$, there is a component of $V_f(\overline{\mathcal{B}}_{g})$ that consists entirely of singular curves. 
    \item For any $0\leq f \leq g$, there is a smooth bielliptic curve $C$ of genus $g$ in characteristic~$p$ whose $p$-rank equals $f$, except for the case $(p, g, f) = (3, 2, 0)$.
\end{enumerate}
\label{cor:p-rankintro}
\end{corollary}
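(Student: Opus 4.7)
The plan is to derive both parts of the corollary from Theorem~\ref{thm:mainintro} by a careful choice of the elliptic curve $E$ and dimension accounting on the boundary of the relevant moduli.

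For part (2), I would select $E$ with $f_E=\min(f,1)$: take $E$ ordinary if $f\geq 1$, and $E$ supersingular if $f=0$. Both exist in characteristic $p>2$, and the constraint $f_E\leq f\leq g-1+f_E$ of Theorem~\ref{thm:mainintro} is satisfied, so $V_f(\overline{\mathcal{B}}_{E,g})$ is pure of dimension $g-2+f-f_E\geq 0$. To conclude that a smooth curve lies in this stratum, I would show that each boundary stratum of $\overline{\mathcal{B}}_{E,g}$ meets $V_f$ in strictly smaller dimension. For an irreducible nodal degeneration with $k$ non-separating nodes and bielliptic normalization $\widetilde C$ of genus $g-k$, the identity $f_C=f_{\widetilde C}+k$ combined with Theorem~\ref{thm:mainintro} applied to $\overline{\mathcal{B}}_{E,g-k}$ gives boundary dimension $g-2+f-f_E-k$, which drops by $k$ from the ambient, so for $k\geq 1$ the boundary is proper. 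Compact-type and chain degenerations in $\overline{\mathcal{B}}_{E,g}$ are even more restrictive (they require the restriction of the cover to each positive-genus component to have degree summing to two) and can be handled by similar accounting. Hence the smooth locus is open and dense in $V_f(\overline{\mathcal{B}}_{E,g})$, and any point in $V_f\setminus V_{f-1}$ there is a smooth bielliptic curve of $p$-rank exactly $f$.

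For part (1), the plan is to construct, for each $0\leq f\leq g-2$, an irreducible component of $V_f(\overline{\mathcal{B}}_g)$ of the full dimension $g-2+f$ lying entirely in the boundary $\partial\overline{\mathcal{B}}_g$. The natural candidates are admissible double covers whose combinatorial structure forces both a small $p$-rank and a rigidity preventing deformation into the smooth locus: for instance, admissible covers over a nodal elliptic base (the boundary point of $\overline{\mathcal{M}}_{1,1}$) or curves whose dual graph $\Gamma_C$ carries enough toric contributions that $f_C = \sum_i f_{C_i} + h^1(\Gamma_C)$ places the curve at the prescribed $p$-rank. After matching dimensions using Theorem~\ref{thm:mainintro} in smaller genera, the decisive step is a local deformation/tangent-space computation: one must verify that the $V_f$-preserving deformations of a generic point of the candidate family do not include the smoothing directions that open the nodes, so that the family is not contained in the closure of the smooth $V_f$-locus.

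The main obstacle is the exceptional case $(p,g,f)=(3,2,0)$ in part (2). Here $V_0(\overline{\mathcal{B}}_{E,2})$ is $0$-dimensional, consisting of finitely many isomorphism classes, and one must verify by an explicit calculation in characteristic~$3$ (via Weierstrass equations, the Cartier--Manin operator, or the Ekedahl--Oort classification of supersingular genus~$2$ curves) that none of these points corresponds to a smooth bielliptic curve, so that the entire $V_0$-stratum coincides with the singular compact-type locus $E\cup E$ of supersingular elliptic factors. A secondary obstacle is the rigidity argument for part (1): since generic smoothings of a node preserve the $p$-rank by upper semicontinuity, isolating the proposed singular family as a genuine component of $V_f$, rather than as a codimension-one sub-stratum of a smooth-containing component, requires a delicate comparison between the tangent space to $V_f$ and the smoothing directions in the versal deformation of the boundary curve.
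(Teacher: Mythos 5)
Your plan for part~(2) has a genuine gap at $f=0$, which is precisely the case that forces a supersingular choice of $E$. You claim that each boundary stratum meets $V_f(\overline{\mathcal{B}}_{E,g})$ in strictly smaller dimension, but when $f_E=0$ the compact-type locus $\Duct_{1,g-2}$ (curves of the form $E\cup_{q} H\cup_{q'} E$ with $H$ hyperelliptic of genus $g-2$) satisfies $\dim V_f(\Duct_{1,g-2})=g-2+f-2f_E=g-2+f=\dim V_f(\overline{\mathcal{B}}_{E,g})$. Thus this boundary stratum does \emph{not} drop in dimension, and pure dimension counting cannot conclude that the generic point of every component is smooth; in fact part~(1) of the corollary tells you that some component is entirely singular, so dimension counting alone cannot possibly yield part~(2) when $f_E=0$. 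The paper's workaround is to build, for each admissible $f$, a singular curve in the \emph{ramified} boundary locus $\Delta_{2,g-2}$ of the form $(D,r)\cup_{r=r'}(H,r')$ with $D$ a genus-$2$ bielliptic curve and $H$ hyperelliptic of genus $g-2$, sitting in a family of dimension $g-3+f-f_E$ strictly below that of $V_f(\overline{\mathcal{B}}_{E,g})$; any component $W$ containing this family must strictly contain it, and since the only singular-only components are of $\Duct_{1,g-2}$ type (combinatorially distinct from $\Delta_{2,g-2}$), the generic point of $W$ is smooth. You also do not address $(p,g,f)=(3,g,0)$ for $g\geq 3$: here the genus-$2$ base case fails (no smooth superspecial genus-$2$ curve in characteristic $3$), and the paper substitutes a genus-$3$ base case via an explicit Hasse--Witt computation before attaching a hyperelliptic curve of genus $g-3$.

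For part~(1) your proposed candidates and your proposed mechanism are both off target. The singular components are not admissible covers of a nodal elliptic base nor curves with large $h^1$ of the dual graph — both of those loci have dimension $\le g-3+f$ by the paper's boundary estimates, too small to be components. The correct candidate is the compact-type locus $\Duct_{1,g-2}$ with $E$ supersingular; its $V_f$-stratum has dimension exactly $g-2+f=\dim V_f(\overline{\mathcal{B}}_g)$, and since $\Duct_{1,g-2}$ is closed (a union of irreducible components of the image of the clutching map) and consists entirely of singular curves, any $(g-2+f)$-dimensional irreducible component of $V_f(\Duct_{1,g-2})$ is automatically a component of $V_f(\overline{\mathcal{B}}_g)$ by purity. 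No versal-deformation or tangent-space comparison is needed: the point is that a closed subvariety of $V_f(\overline{\mathcal{B}}_g)$ having full dimension must contain an irreducible component, which settles the matter without analyzing smoothing directions. Your deformation-theoretic plan would, if carried out, likely reproduce this, but it overcomplicates a clean dimension argument and does not identify the actual geometric locus.
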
 

The spaces $\mathcal{B}_{E,g}$ and $\mathcal{B}_g$ are of particular interest because they have interesting arithmetic applications. 
Any smooth supersingular curve of genus $g$ admits a map to % independent maps to 
a supersingular elliptic curve. %{\color{teal} Here the independence of the maps means they give rise to linearly disjoint extensions of function fields.}  
By choosing $E$ to be supersingular and considering the space $\mathcal{B}_{E,g}$,
we learn more about the case where such a map has degree $2$.  Understanding this could also lead to a better understanding of whether there exists a smooth supersingular curve of genus $g$. In fact, the existence of a smooth supersingular curve $C$ of genus $g = 4$ in characteristic $p > 3$ (and hence for all $p > 0$) was established in \cite{Kudo2020}, where $C$ was constructed as the normalization of the fiber product of two (well-chosen) supersingular elliptic curves $E_i \to \Po$, for $i = 1, 2$, over $\Po$. Note that the induced maps $C \to E_1$ and $C \to E_2$ are double covers. These moduli spaces are also relevant for Oort's conjecture, which states that the generic point of any component of the locus of supersingular abelian varieties has the minimal automorphism group, that is $\{\pm 1 \}$. An analog of this statement for supersingular curves was proved for $g = 4$ and $p > 2$ in \cite{dragutinović2024oortsconjectureautomorphismssupersingular}, which implies Oort's conjecture for $g = 4$ and $p > 2$. The proof uses geometric insights into the $p$-rank $0$ stratum of $\mathcal{B}_{4}$. 

\subsection{Structure of this paper}

The proof of our main theorem makes extensive use of the boundary of $\mathcal{B}_{E,g}$ in $\overline{\mathcal{M}}_g$, and a large part of the paper is devoted to understanding that boundary. The reason the boundary is so useful is that, while it can be extremely difficult to construct a smooth curve with a given $p$-rank, it is relatively easy to construct singular curves with a given $p$-rank. Namely, one can clutch together multiple well-understood curves of smaller genus at ordinary double points. This provides a point in $V_f(\overline{\mathcal{B}}_{E,g})$. If the boundary components of $\overline{\mathcal{B}}_{E,g}$ are well-understood, then, in many cases, one can argue by dimension reasons that $V_f(\mathcal{B}_{E,g})$ is dense inside $V_f(\overline{\mathcal{B}}_{E,g})$ and, in particular, also non-empty. Moreover, the dimension of the boundary can yield the dimension of the smooth locus.

To this end, in Section~\ref{section:construct_meg_stable}, we introduce the Deligne-Mumford stack $\overline{M}_{E,g;n}$, which parametrizes admissible double covers of a genus~1 curve with one component identified with $E$, along with~$n$ marked points on the source curve. The substack $M_{E,g;n}$ consists of points where the source curve is smooth. This provides the appropriate setting for proving the results we need about~$\overline{\mathcal{B}}_{E,g}$, which is the image of the map $\mb_{E,g} \to \overline{\mathcal{M}}_g$ that extracts the source curve.
In Lemma~\ref{lem: boundary is clutching} and Corollary~\ref{cor: smooth locus is dense}, we explicitly describe the boundary of $\mb_{E,g}$ and deduce that its smooth locus $M_{E,g}$ is dense. Since the forgetful morphism $\overline{M}_{E,g} \to \overline{\mathcal{M}}_g$ is not quasi-finite, we consider a modified moduli space $\mb_{E,g;n}^{\red}$, consisting of covers whose branch points on $E$ sum to the unit element $O_E$. In Proposition~\ref{forgetful map is quasi-finite}, we prove that the forgetful map $\mb_{E,g}^{\red} \to \Mb_g$ is quasi-finite on the smooth locus~$\mbr_{E,g}$ and generically finite on every boundary component.

In Section \ref{Sec: p rank strat MEg}, these results are used to analyze the $p$-rank stratification of $\mathcal{B}_{E,g}$. It is shown that every $p$-rank stratum intersects non-trivially with the boundary $\overline{\mathcal{B}}_{E,g} - \mathcal{B}_{E,g}$. The purity theorem~\cite[Theorem 4.1]{de2000purity} then provides a lower bound on the dimension of $V_f (\overline{\mathcal{B}}_{E,g})$, which we prove to be sharp following the strategy of \cite[(2.3)]{faber2004complete}, resulting in Theorem~\ref{thm:mainintro}. More sophisticated arguments are required to handle the case when $E$ is a supersingular curve.
Finally, we prove Corollary~\ref{cor:p-rankintro} using geometric insights into the $p$-rank $f$ strata and the inductive approach of \cite[Theorem~6.4]{pries_current_results}.

In Section \ref{sec:applications}, applications of Theorem~\ref{thm:mainintro} to the existence of smooth supersingular bielliptic curves are discussed. Finally, in Section~\ref{sec: explicit} we construct explicit smooth double covers of $E$ of low $p$-rank.  %{\color{blue} DD: I guess we changed this now?} {\color{teal} SG: changed this now.} 
In Appendix~\ref{appendix:geometry_meg}, necessary results about the geometry of relevant Deligne-Mumford stacks such as smoothness and properness are proved.

%For the rest of the paper, we work over an algebraically closed field $k$ of odd characteristic $p$.

\section*{Acknowledgment}

This project originated at the Arizona Winter School in 2024, of which we would like to thank the organizers cordially. In particular, we would like to thank Rachel Pries for organizing the lecture series on the Torelli locus, suggesting this project topic and for many helpful conversations and instructions. We would like to thank {George Nicolas Diaz-Wahl, Caleb Ji, Rose Lopez and Isaiah Mindich} for contributions to this project during the Arizona Winter School. We would also like to thank Ben Moonen for his helpful remark. DD is supported by the Mathematical Institute of Utrecht University. NPT is supported by the National Science Foundation under Grant No. DGE-2141064.

It is our pleasure to thank an anonymous referee for helpful comments.

\section{Moduli of double covers of $E$}\label{section:construct_meg_stable}

Let $k$ be an algebraically closed field of characteristic $p>2$, and let $E$ be a fixed elliptic curve over $k$. Our goal is to understand the locus $\B_{E,g}$ of smooth curves of genus $g$ that admit a double cover to $E$. In order to understand the boundary of $\B_{E,g}$ in $\overline{\mathcal{M}}_g$, we instead study the moduli space that parametrizes double covers of $E$ together with the map to $E$, which has a more accessible deformation theory.
More precisely, we construct the Deligne-Mumford stack $\mb_{E,g}$ of genus-$g$ double covers of a fixed elliptic curve $E$. Details about the construction and geometry of~$\mb_{E,g}$ can be found in Appendix~\ref{appendix:geometry_meg}. By defining $\mb_{E,g}$ as a substack of stable maps to a suitable Deligne-Mumford stack, we obtain its deformation theory and dimension. This provides insight into the boundary components of $\mb_{E,g}$. Note that $\Bb_{E,g}$, the closure of $\B_{E,g}$ in $\overline{\mathcal{M}}_g$, equals the image of the forgetful map $\overline{M}_{E,g} \to \overline{\mathcal{M}}_g$. Lemma~\ref{lem: boundary is clutching} describes the boundary of $M_{E,g}$, and Proposition~\ref{forgetful map is quasi-finite} allows us to use that description to understand the boundary of $\B_{E,g}$.

We will work with the following stacks, describe their $k$-points, and give a rigorous definition of the stacks using twisted stable maps in Definition~\ref{def: mbEgn}.

\begin{itemize}
\item $\mb_{E,g;n}$ is the stack parametrizing the following data: \begin{itemize}
\item A genus-$1$ nodal curve $C$ equipped with a map $C\to E$, with one component identified with $E$ by the map and all other components copies of $\mathbb{P}^1$. %

\item An admissible $\mathbb{Z}/2\mathbb{Z}$-cover $D \to C$ in the sense of \cite[Definition 4.3.1]{abramovich2003twisted} with $D$ a nodal curve of genus $g$.

\item A reduced degree $n$ divisor $\Sigma_{\ub} \subset C$ above which $D \to C$ is unramified.
\end{itemize}

We will write $\mb_{E,g} \coloneqq \mb_{E,g;0}$. For the unfamiliar reader, an admissible $\mathbb{Z}/2\mathbb{Z}$-cover $D \to C$ is essentially a double cover by a nodal curve $D$ in which nodes map to nodes, and ramification over a node is allowed provided that the ramification index at the node is the same when restricting the cover to each component containing it. 
Within $\mb_{E,g;n}$, we have the open substack $M_{E,g;n}$ where $C \cong E$. We think of $M_{E,g}$ as the moduli space of smooth genus~$g$ double covers of $E$, and of $M_{E,g;n}$ as the moduli space of genus~$g$ double covers of $E$ together with an unbranched divisor on $E$.

We will denote points of $\mb_{E,g;n}$ by pairs $(D \to C, \Sigma_{\ub})$, and points of $\mb_{E,g}$ by $D \to C$. Sometimes, we will write $D \to C \to E$ to emphasize the data of the map $C \to E$, but for the most part, we will just think of $C$ as having a single component identified with $E$.
For a $k$-point~$(D \xrightarrow[]{\pi} C, \Sigma_{\ub})$ of $\mb_{E,g;n}$, we will use $\Sigma_{\br} \subset C$ to denote the degree $2g - 2$ divisor of branch points in the smooth locus of $C$. We note that $(C, \Sigma_{\br} \cup \Sigma_{\ub})$ and $(D, \pi^{-1}(\Sigma_{\br} \cup \Sigma_{\ub})^{\red})$ must be stable (see Remark \ref{rem: describing the $k$ point of $M_{E,g,n}$}).

\item $\mt_{E,g;n}$ is the stack parametrizing the following data: \begin{itemize}
\item A $k$-point $(D \to C,\Sigma_{\ub}) \in \mb_{E,g;n}$.

\item An ordering of the branch points and the unbranched points $\Sigma_{\ub}$.
\end{itemize}
Note that there is an \'{e}tale $S_{2g - 2} \times S_n$-cover $\mt_{E,g;n} \to \mb_{E,g;n}$. The factor $S_{2g-2}$ acts on $\Sigma_{\br}$ and the factor $S_n$ acts on $\Sigma_{\ub}$. 
\end{itemize}

We will use the following result, which is proven in Appendix \ref{appendix:geometry_meg} as Proposition \ref{proposition:meg_proper_dm}, Theorem~\ref{theorem:meg_smooth}, and Corollary \ref{corollary:megn_irred}. The proofs and further details of our constructions are deferred to the appendix for readability, since they are not essential for understanding the rest of the paper.

\begin{theorem}\label{theorem:meg_properties_summary}
For $g \ge 2$ and $n \ge 0$, $\mb_{E,g;n}$ is an irreducible smooth proper Deligne-Mumford stack of dimension $2g - 2 + n$.
\end{theorem}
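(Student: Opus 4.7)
The plan is to realize $\mb_{E,g;n}$ as a substack of Abramovich--Vistoli's stack of balanced twisted stable maps into $B(\mathbb{Z}/2\mathbb{Z})$, carrying the additional data of the map to $E$, and then verify the four conclusions separately. Via the Abramovich--Vistoli correspondence, an admissible $\mathbb{Z}/2\mathbb{Z}$-cover $D \to C$ is the same as a balanced twisted stable map from an orbifold thickening of $C$ into $B(\mathbb{Z}/2\mathbb{Z})$; combined with the degree one map $C \to E$ and the $n$ unramified marked points, this embeds $\mb_{E,g;n}$ as a closed substack of an appropriate twisted stable maps moduli stack. The Deligne--Mumford and properness conclusions will then be inherited from the ambient stack, via the valuative criterion applied to stable reduction for admissible covers.

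For smoothness and dimension, I would argue by deformation theory. Since $p > 2$, the $\mathbb{Z}/2\mathbb{Z}$-action on $D$ is tame, so deformations of the cover are identified with deformations of its branch divisor on $C$, and there are no obstructions. Deformations of $(D \to C, \Sigma_{\ur})$ therefore decompose into deformations of the branch and unramified divisors on the $E$-component of $C$, with the auxiliary $\Po$-components and their attaching data rigidified by the admissibility condition. Riemann--Hurwitz forces $2g - 2$ branch points, so together with the $n$ unramified marked points this yields $2g - 2 + n$ moduli, matching the claimed dimension. Unobstructedness along the boundary is covered by the smoothing theory of admissible covers of Harris--Mumford, valid whenever $p \neq 2$.

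The main obstacle is irreducibility. On the open locus $M_{E,g;n}$, sending a cover to its branch and unramified divisors defines a map to the complement of the diagonals in $\Sym^{2g-2}(E) \times E^n$. This map is \'etale of degree $\# E[2] = 4$, because double covers of $E$ with branch divisor $B$ are classified by line bundles $L$ with $L^{\otimes 2} \cong \cO_E(B)$, of which there are exactly four. The target is irreducible, so $M_{E,g;n}$ has at most four components; I would then run a monodromy argument, exploiting the braid action of moving pairs of branch points through a coincidence, to show that the monodromy permutes the four square roots transitively, giving irreducibility of $M_{E,g;n}$. Finally, the admissible-cover smoothing theory used above shows every boundary stratum lies in the closure of $M_{E,g;n}$, yielding irreducibility of all of $\mb_{E,g;n}$.
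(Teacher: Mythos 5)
Your outline matches the paper's broad architecture — twisted stable maps to $E \times B\mathbb{Z}/2\mathbb{Z}$ for properness and the Deligne--Mumford property, deformation theory for smoothness, and a separate argument for irreducibility — so let me concentrate on where the approaches diverge and where yours has gaps.

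For smoothness you gesture at tameness and Harris--Mumford smoothing theory, but the parenthetical claim that ``the auxiliary $\Po$-components and their attaching data [are] rigidified by the admissibility condition'' is wrong and misleading. At a boundary point the $\Po$-bubbles are not rigid: the branch points on each bubble move freely modulo $\Aut(\Po)$, the attaching node on $E$ moves, and there is a separate smoothing parameter for each node. The correct statement is that these moduli, together with the smoothing directions, sum to $2g-2+n$, and verifying this requires the cotangent-complex computation (or at least an explicit Euler-characteristic bookkeeping as in the paper's Theorem~\ref{theorem:meg_smooth}). Your sketch is too coarse to establish smoothness at the boundary; the vanishing of $\Ext^2(\mathbb{L}_{\mathcal{C}_0/E},\mathcal{O}_{\mathcal{C}_0})$ is the actual content and does not follow from tameness alone.

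For irreducibility, your argument is genuinely different from the paper's and essentially sound, except for one miscalibrated step. The paper (Lemma~\ref{Lem: Meg is irr}) exhibits an explicit surjection onto $M_{E,g}$ from the irreducible space $X$ of pairs $(\mathcal{M},r)$ with $\mathcal{M} \in \Pic^{g-1}(E)$ and $r \in H^0(E,\mathcal{M}^{\otimes 2})$ with reduced zero divisor, so no monodromy computation is needed. You instead map $M_{E,g;n}$ to a configuration space, observe the map is finite \'etale of degree $\#E[2]=4$, and propose to prove transitivity of the monodromy. That strategy works, but the loops you name do not: exchanging two branch points (``moving pairs of branch points through a coincidence'') is a braid move, and braid generators map to zero under the composite $\pi_1(\mathrm{Conf}^{2g-2}(E)) \to \pi_1(\Pic^{2g-2}E) \cong H_1(E) \to E[2]$, so they act trivially on the four square roots. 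The loops that do act nontrivially are those in which a single branch point winds around a generator of $H_1(E)$; these surject onto $E[2]$ and give transitivity. With that correction your \'etale-cover argument is a clean alternative, arguably more conceptual than the paper's explicit parametrization, though what the paper's version buys is that irreducibility of $X$ is immediate from irreducibility of $\Pic^{g-1}(E)$ and one never touches $\pi_1$ at all.
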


\subsection{The boundary $\mb_{E,g}-M_{E,g}$}\label{section:boundary_description}
Let $\mt_{E,g;n}$ and $\mb_{E,g;n}$ be as defined above; for stability, we assume $g \geq 1$, $n \geq 0$, and $2g + n \geq 3$. 
We also consider the moduli of pointed hyperelliptic curves: let $\hti_{g;n}$ denote the Deligne-Mumford stack of stable $\mathbb{Z}/2\mathbb{Z}$-covers of the projective line with $2g + 2$ ordered smooth branch points and $n$ smooth unbranched marked points on the base curve; for stability, we assume $g \geq 0$, $n \geq 0$, and $2g + n \geq 1$. 
Furthermore, if $\Gamma$ is an irreducible substack parametrizing certain curves, we call the curve representing its generic point the ``generic curve'' of $\Gamma$.

In this section, we describe the boundary strata of $\mb_{E,g}$ in terms of images of clutching morphisms and, in particular, show that $M_{E,g}$ is dense in $\mb_{E,g}$.
We require two types of clutching maps, depending on whether the double cover is branched at the node where the curves are clutched. A visual representation of the clutching maps on the source and target curves is given in Figure~\ref{fig:clutching}.

\begin{figure}[h]
  \centering
    \begin{subfigure}[t]{0.34\textwidth} \label{fig: ur 1}
    \centering
    \scalebox{0.7}{

    \begin{tikzpicture}[yscale=cos(70)]

  % Left torus 
  \draw[thick, double distance=5mm] (-1.45,0) ++(0:1) arc (0:180:1);
    \draw[thick, double distance=5mm] (-1.45,0) ++(180:1) arc (180:360:1);

  % Right torus 
  \draw[thick, double distance=5mm] (0.95,0) ++(0:1) arc (0:180:1);
    \draw[thick, double distance=5mm] (0.95,0) ++(180:1) arc (180:360:1);

  % Fill for the bridge
  \fill[white]
          (-0.75,1.5) .. controls (-0.45,1.2) and (-0.05,1.2) .. (0.25,1.5)
          -- (0.25,-1.5) .. controls (-0.05,-1.2) and (-0.45,-1.2) .. (-0.75,-1.5)
          -- cycle;

  % Outline for the bridge
  \draw[thick]
          (-0.75,1.5) .. controls (-0.45,1.2) and (-0.05,1.2) .. (0.25,1.5);
    \draw[thick]
          (-0.75,-1.5) .. controls (-0.45,-1.2) and (-0.05,-1.2) .. (0.25,-1.5);

\fill[white]
          (1.65,1.5) .. controls (2.05,1.2) and (2.45,1.4) .. (2.45,1.4) 
         -- (2.45,0) .. controls (2.2,0) and (2.2,0) .. (2.2,0)
          -- cycle;
        \draw[thick]
          (1.65,1.5) .. controls (2.05,1.2).. (2.45,1.4);
        \fill[white]
          (1.65,-1.5) .. controls (2.05,-1.2) and (2.45,-1.4) .. (2.45,1.4)
          -- (2.45,0) .. controls (2.2,0) and (2.2,0) .. (2.2,0)
          -- cycle;
        \draw[thick]
          (1.65,-1.5) .. controls (2.05,-1.2).. (2.45,-1.4);
        \draw[thick] 
          (2.45,1.4) .. controls (1.75, 0.2) and (1.75, -0.2) .. (2.45,-1.4);

  % P1 torus 
  \draw[thick, double distance=5mm] (4.0,0) ++(0:1) arc (0:180:1);
    \draw[thick, double distance=5mm] (4.0,0) ++(180:1) arc (180:360:1);
  % node
\fill[white]
        (3.25,1.5) .. controls (2.85,1.2) and (2.45,1.4) .. (2.45,1.4)
        -- (2.45,0) .. controls (2.7,0) and (2.7,0) .. (2.7,0)
        -- cycle;
  \fill[red] (2.45,0) ellipse[x radius=2pt, y radius=5.85pt];
  \draw[thick]
          (3.25,1.5) .. controls (2.85,1.2).. (2.45,1.4);
    \fill[white]
          (3.25,-1.5) .. controls (2.85,-1.2) and (2.45,-1.4) .. (2.45,-1.4)
          -- (2.45,0) .. controls (2.7,0) and (2.7,0) .. (2.7,0)
          -- cycle;
   \draw[thick]
          (3.25,-1.5) .. controls (2.85,-1.2).. (2.45,-1.4);
        \draw[thick] 
          (2.45,1.4) .. controls (3.15, 0.2) and (3.15, -0.2) .. (2.45,-1.4);
          
          \fill[red] (2.45,1.35) ellipse[x radius=2pt, y radius=5.85pt];
          \fill[red] (2.45,-1.35) ellipse[x radius=2pt, y radius=5.85pt];
  
  \draw[->, thick] (0, -2.1) -- (1.2, -3.8);
  \draw[->, thick] (3.7, -2.1) -- (3.2, -3.8);
  
  % downstairs E
          \draw[thick, double distance=5mm] (1.2,-6.5) ++(0:1) arc (0:180:1);
          \draw[thick, double distance=5mm] (1.2,-6.5) ++(180:1) arc (180:360:1);
          \draw[thick] (3.2, -6.5) ellipse[x radius=21pt, y radius = 58.5pt];
          \draw[thick, dashed] (2.45, -6.5) .. controls (2.93, -6.2) and (3.41, -6.2) .. (3.95, -6.5);
          \draw[thick] (2.45, -6.5) .. controls (2.93, -7.1) and (3.41, -7.1) .. (3.95, -6.5);
          \fill[red] (2.45,-6.5) ellipse[x radius=2pt, y radius=5.85pt];
          
          % Marked points P1
          \fill (2.8, -5.85) ellipse[x radius=2pt, y radius=5.85pt];
          \fill (3, -5.2) ellipse[x radius=2pt, y radius=5.85pt];
          \fill (3.3, -5.6) ellipse[x radius=2pt, y radius=5.85pt];
          \fill (3.3, -7.8) ellipse[x radius=2pt, y radius=5.85pt];
        
          % Marked points P1 cover
          \fill (2.8+0.4, 6.4-5.85) ellipse[x radius=2pt, y radius=5.85pt];
          \fill (3+0.4, 6.4-5.2) ellipse[x radius=2pt, y radius=5.85pt];
          \fill (3.3+0.4, 6.4-5.6) ellipse[x radius=2pt, y radius=5.85pt];
          \fill (3.3+0.4, 6.8-7.8) ellipse[x radius=2pt, y radius=5.85pt];

          % Marked points E
          \fill (0.5, -5.6) ellipse[x radius=2pt, y radius=5.85pt];
          \fill (1, -5.5) ellipse[x radius=2pt, y radius=5.85pt];
          % Marked points E cover
          \fill (0.5-2, 6.7-5.6) ellipse[x radius=2pt, y radius=5.85pt];
          \fill (1, 4.5-5.5) ellipse[x radius=2pt, y radius=5.85pt];
\end{tikzpicture}

   }
   
    \caption{The clutching morphism $\kappa_{g_1, g_2}^\mathrm{u}$}
  \end{subfigure}
  \hspace{1cm}
   \begin{subfigure}[t]{0.24\textwidth} \label{fig: ur 2}
    \centering
    \scalebox{0.7}{
\begin{tikzpicture}[yscale=cos(70)]

  % Bottom torus 
  \draw[thick, double distance=5mm] (1.2,-1) ++(0:1) arc (0:180:1);
  \draw[thick, double distance=5mm] (1.2,-1) ++(180:1) arc (180:360:1);

  % Top torus 
  \draw[thick, double distance=5mm] (1.2,1.5) ++(0:1) arc (0:180:1);
  \draw[thick, double distance=5mm] (1.2,1.5) ++(180:1) arc (180:360:1);

  % P1 torus 
  \draw[thick, double distance=5mm] (3.6,0.2) ++(0:1) arc (0:180:1);
  \draw[thick, double distance=5mm] (3.6,0.2) ++(180:1) arc (180:360:1);
  
  % nodes
  \fill[red] (2.4,0.8) ellipse[x radius=2pt, y radius=5.85pt];
  \fill[red] (2.4,-0.4) ellipse[x radius=2pt, y radius=5.85pt];
  
  \draw[->, thick] (1.2, -3.1) -- (1.2, -4.5);
  \draw[->, thick] (3.7, -2.1) -- (3.2, -3.8);
  
  % downstairs E
  \draw[thick, double distance=5mm] (1.2,-6.5) ++(0:1) arc (0:180:1);
  \draw[thick, double distance=5mm] (1.2,-6.5) ++(180:1) arc (180:360:1);
  \draw[thick] (3.2, -6.5) ellipse[x radius=21pt, y radius = 58.5pt];
  % \draw[thick, dashed] (2.45, -6.5) .. controls (2.93, -6.8) and (3.41, -6.8) .. (3.9, -6.5);
  \fill[red] (2.45,-6.5) ellipse[x radius=2pt, y radius=5.85pt];

  % P1 equator
  \draw[thick, dashed] (2.45, -6.5) .. controls (2.93, -6.2) and (3.41, -6.2) .. (3.95, -6.5);
  \draw[thick] (2.45, -6.5) .. controls (2.93, -7.1) and (3.41, -7.1) .. (3.95, -6.5);

  % Marked points P1
  \fill (2.8, -5.85) ellipse[x radius=2pt, y radius=5.85pt];
  \fill (3, -5.2) ellipse[x radius=2pt, y radius=5.85pt];
  \fill (3.3, -5.6) ellipse[x radius=2pt, y radius=5.85pt];
    \fill (3.3, -7.8) ellipse[x radius=2pt, y radius=5.85pt];

  % Marked points P1 cover
  \fill (2.8+0.2, 6.6-5.85) ellipse[x radius=2pt, y radius=5.85pt];
  \fill (3+0.2, 6.6-5.2) ellipse[x radius=2pt, y radius=5.85pt];
  \fill (3.3+0.2, 6.6-5.6) ellipse[x radius=2pt, y radius=5.85pt];
  \fill (3.3+0.4, 6.8-7.8) ellipse[x radius=2pt, y radius=5.85pt];
 
\end{tikzpicture}

    }
    \caption{Special case of $\kappa^{\mathrm u}_{1, g_2}$ whose image lies in $\delta_{1, g_2}$}
    %Dusan: I changed this; take a look
    %{Special case of $\kappa^u_{g_1, g_2}$ where the cover of $E$ is \'etale}
    \end{subfigure}
    \hfill

    \begin{subfigure}[t]{0.34\textwidth} \label{fig: ramified}
    \centering
    \scalebox{0.7}{
    \begin{tikzpicture}[yscale=cos(70)]

  % Left torus 
  \draw[thick, double distance=5mm] (-1.2,0) ++(0:1) arc (0:180:1);
  \draw[thick, double distance=5mm] (-1.2,0) ++(180:1) arc (180:360:1);

  % Right torus 
  \draw[thick, double distance=5mm] (1.2,0) ++(0:1) arc (0:180:1);
  \draw[thick, double distance=5mm] (1.2,0) ++(180:1) arc (180:360:1);

  % Fill for the bridge
  \fill[white]
    (-0.5,1.5) .. controls (-0.2,1.2) and (0.2,1.2) .. (0.5,1.5)
    -- (0.5,-1.5) .. controls (0.2,-1.2) and (-0.2,-1.2) .. (-0.5,-1.5)
    -- cycle;

  % Outline for the bridge
  \draw[thick]
    (-0.5,1.5) .. controls (-0.2,1.2) and (0.2,1.2) .. (0.5,1.5);
  \draw[thick]
    (-0.5,-1.5) .. controls (-0.2,-1.2) and (0.2,-1.2) .. (0.5,-1.5);
    
  % P1 torus 
  \draw[thick, double distance=5mm] (3.7,0) ++(0:1) arc (0:180:1);
  \draw[thick, double distance=5mm] (3.7,0) ++(180:1) arc (180:360:1);
  % node
  \fill[red] (2.45,0) ellipse[x radius=2pt, y radius=5.85pt];
  
  \draw[->, thick] (0, -2.1) -- (1.2, -3.8);
  \draw[->, thick] (3.7, -2.1) -- (3.2, -3.8);
  
  % downstairs E
  \draw[thick, double distance=5mm] (1.2,-6.5) ++(0:1) arc (0:180:1);
  \draw[thick, double distance=5mm] (1.2,-6.5) ++(180:1) arc (180:360:1);
  \draw[thick] (3.2, -6.5) ellipse[x radius=21pt, y radius = 58.5pt];
  % \draw[thick, dashed] (2.45, -6.5) .. controls (2.93, -6.8) and (3.41, -6.8) .. (3.9, -6.5);
  \fill[red] (2.45,-6.5) ellipse[x radius=2pt, y radius=5.85pt];

    % equator of P1
  \draw[thick, dashed] (2.45, -6.5) .. controls (2.93, -6.2) and (3.41, -6.2) .. (3.95, -6.5);
  \draw[thick] (2.45, -6.5) .. controls (2.93, -7.1) and (3.41, -7.1) .. (3.95, -6.5);
  
  % Marked points P1
  \fill (2.8, -5.85) ellipse[x radius=2pt, y radius=5.85pt];
  \fill (3, -5.2) ellipse[x radius=2pt, y radius=5.85pt];
  \fill (3.3, -5.6) ellipse[x radius=2pt, y radius=5.85pt];

  % Marked points P1 cover
  \fill (2.8+0.2, 6.4-5.85) ellipse[x radius=2pt, y radius=5.85pt];
  \fill (3+0.2, 6.4-5.2) ellipse[x radius=2pt, y radius=5.85pt];
  \fill (3.3+0.2, 6.4-5.6) ellipse[x radius=2pt, y radius=5.85pt];
  
      % Marked points E
  \fill (0.5, -5.6) ellipse[x radius=2pt, y radius=5.85pt];
  % Marked points E cover
  \fill (0.5-2, 6.7-5.6) ellipse[x radius=2pt, y radius=5.85pt];
\end{tikzpicture}
}
    \caption{The clutching morphism $\kappa_{g_1, g_2}^\mathrm{r}$}
  \end{subfigure}
  
  \caption{Clutching morphisms on the boundary of $\overline{M}_{E, g}$.}
  \label{fig:clutching}
\end{figure}

In Definitions~\ref{def: clutching u} and~\ref{def: clutching r}, we use $Q$ (resp.~$R$) to denote an unbranched point (resp.~branch point) of the target curve, also called the base curve, and $q$ (resp.~$r$) to denote an unramified point (resp.~ramification point) of the source curve, also called the covering curve. In the notation introduced at the beginning of Section~\ref{section:construct_meg_stable}, $Q$ lies on $\Sigma_{\ub}$ and $R$ lies on $\Sigma_{\br}$. Moreover, in the definitions below, we describe the clutching maps on the smooth loci of $\mt_{E,g_1}$ and $\hti_{g_2}$ to simplify the exposition. These clutching maps can be extended to the closures by replacing the source curve~$E$ with a curve $C$ in which one component is $E$ and all other components are copies of $\mathbb{P}^1$, and by replacing the base curve $\mathbb{P}^1$ with a bubble of copies of $\mathbb{P}^1$. Definition~\ref{def: clutching u} deals with the clutching maps depicted in parts (a) and (b) of Figure~\ref{fig:clutching}, where the clutching point is not a ramified point of the double cover.

\begin{definition}\label{def: clutching u}

Given $g_1 \geq 1$, $g_2 \geq 0$, we have the following clutching map:
\begin{align*}
\begin{split}
    \kappa^{{\rm u}}_{g_1,g_2}: & \mt_{E,g_1; 1} \times \hti_{g_2; 1} \to \mt_{E,g_1+g_2+1} \\
     \text{source curve: }&(D,q_1,q_2) \times (H,q_1',q_2') \mapsto (D,q_1,q_2) \cup_{q_1=q_1',q_2=q_2'}(H,q_1',q_2') \\
     \text{target curve: }&(E,Q) \times (\Po, Q' ) \mapsto (E,Q) \cup_{Q=Q'}(\Po,Q'). \\
\end{split}    
\end{align*}
    Here $\pi_1: D \to (E,Q)$ is a double cover parametrized by $\mt_{E,g_1; 1}$ and $\pi_2: H \to (\Po,Q')$ is a double cover parametrized by $\hti_{g_2; 1}$, Moreover, write $\{q_1,q_2\}=\pi_1^{-1}(Q)$ and $\{q_1',q_2'\}=\pi_2^{-1}(Q')$. The output of the clutching map is a double cover of $(E,Q) \cup_{Q=Q'}(\Po,Q')$. Denote the reduced image of $\kappa_{g_1,g_2}^{{\rm u}}$ as $\Du_{g_1,g_2}$. %{\color{blue} DD: I feel this page needs slightly more text to "fill out" our exposition and make it more consistent with the phrasing in Section 3 and the Appendix.} 
    \end{definition}
    
    Notice that $g_2$ can be $0$ in this case, since a $\Z/2\Z$-cover of a genus-$0$ curve has $2$ branching points, and so the target curve has three special points, which is stable. 

    The structure and the generic curve of $\Du_{g_1,g_2}$ depend on $g_1$ in the following ways:
    \begin{enumerate}
    \item Suppose $g_1=1$.
     Let $(H,q_1, q_2, r_1, \cdots, r_{2g_2+2})$ be a generic curve of $\hti_{g_2;1}$, where $q_1,q_2$ are the preimage of the $1$ marked point on $\Po$, and $r_1, \cdots, r_{2g_2+2}$ are the ramification points of~$H$. A generic curve of~$\mt_{E,1;1}$ is either the disconnected $(E,Q) \sqcup (E,Q)$ or an \'etale double cover $(E', q_1,q_2)$ of $(E,q)$.  Therefore, there are two types of configurations for a generic curve of~$\Du_{1,g_2}$.
     We denote the sublocus of~$\Du_{1,g_2}$ consisting of curves of compact type, meaning their dual graph is acyclic, by $\Duct_{1,g_2}$, and we denote the complement of $\Duct_{1,g_2}$ in $\Du_{1,g_2}$ by $\xi_{1,g_2}$. There are two possibilities for the source curve:
    \begin{enumerate}
    \item 
    $(E,Q) \cup_{Q=q_1'}(H,q_1', r_1, \cdots, r_{2g_2+2}, q_2') \cup_{q_2'=Q}(E,Q)$. In this case it is of compact type and is in $\Duct_{1,g_2}$.  This corresponds to Figure~\ref{fig:clutching}.(b).
    \item $(E',q_1,q_2) \cup_{q_1=q_1',q_2=q_2'}(H,q_1', q_2', r_1, \cdots, r_{2g_2+2})$. In this case the curve has toric rank $1$ and is in $\xi_{1,g_2}$. This corresponds to Figure~\ref{fig:clutching}.(a).
    \end{enumerate}
\item Suppose $g_1 \geq 2$. Then $\mt_{E,g_1;1}$ is connected  and only the clutching map depicted in Figure~\ref{fig:clutching}.(a) can occur. Let $(D, q_1, q_2)$ be a generic curve of $\mt_{E,g_1;1}$ and $(H, q_1', q_2')$ be a generic curve of $\hti_{g_2,1}$. Then a generic curve of $\Du_{g_1,g_2}$ is of the form $(D, q_1, q_2) \cup_{q_1=q_1',q_2=q_2'}(H, q_1, q_2)$. Therefore, a curve in $\Du_{g_1,g_2}$ has toric rank at least $1$.
\end{enumerate}

Next, we define the clutching map in the case when the clutching point is also a ramified point of the double cover, as is depicted in Figure~\ref{fig:clutching}.(c).

    \begin{definition} \label{def: clutching r}
    Given $g_1 \geq 2$, $g_2\geq 1$, we have the following clutching map:
    \begin{align*}
    \begin{split}
    \kappa^{{\rm r}}_{g_1,g_2}: & \mt_{E,g_1} \times \hti_{g_2} \to \mt_{E,g_1+g_2} \\
    \text{source curve: }& (D, r_i) \times (H, r'_j) \mapsto (D, r_i)\cup_{r_{2g_1-2}=r'_1}(H, r'_j)\\
    \text{target curve: }& (E, R_i) \times (\Po, R'_j) \mapsto (E, R_i)\cup_{R_{2g_1-2}=R'_1}(\Po, R'_j).
    \end{split}
    \end{align*}
    On the source curves the $(2g_1-2)^{th}$ smooth ramification point of $D$ is identified with the first smooth ramification point of $H$ as a node, which is still a ramification point, and similarly for the target curves. Denote the reduced image of $\kappa_{g_1,g_2}^{{\rm r}}$ as $\Dr_{g_1,g_2}$.
\end{definition}
The generic point of $\Dr_{g_1,g_2}$ is represented by a curve of compact type. %{\color{teal} The clutching map $\kappa^{\rm r}_{g_1,g_2}$ is visually represented in Figure~\ref{fig:clutching}.(c).} %Figure~\ref{fig:clutching} provides a visual representation of the clutching morphisms $\kappa^r_{g_1,g_2}$ and $\kappa^u_{g_1,g_2}$.

\begin{lemma}\label{lem: boundary is clutching}
For $g\geq 2$, the boundary of $\mt_{E,g}$ is the union of the image of the above clutching maps. In particular,
\[\mt_{E,g}-\mt_{E,g}^\circ=\bigcup_{g_1=1}^{g-1}\Du_{g_1, g-1-g_1} \cup \bigcup_{g_1=2}^{g-1}\Dr_{g_1,g-g_1}.\]
\end{lemma}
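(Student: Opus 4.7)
The plan is to prove the two inclusions separately. The inclusion $\supseteq$ is essentially immediate from Definitions~\ref{def: clutching u} and~\ref{def: clutching r}: both clutching maps produce admissible covers whose base curve acquires at least one node, so their images lie outside $\mt_{E,g}^\circ$. The arithmetic genus additions ($+1$ when gluing at two points, $+0$ when gluing at one) match the indexing $g = g_1 + g_2 + 1$ for $\Du$ and $g = g_1 + g_2$ for $\Dr$.

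For the reverse inclusion, I would take any $(D \xrightarrow{\pi} C) \in \mt_{E,g} - \mt_{E,g}^\circ$. Since $C$ has arithmetic genus $1$ with $E$ the only positive-genus component, the dual graph of $C$ is a tree with $E$ as a root and $\mathbb{P}^1$-subtrees pendant; in particular every node of $C$ is separating. Pick a node $Q$ of $C$, write $C = C_1 \cup_Q C_2$ with $E \subseteq C_1$ and $C_2$ a tree of $\mathbb{P}^1$'s, and normalize $\pi$ at $Q$ and $\pi^{-1}(Q)$ to obtain sub-covers $D_i \to C_i$. If $Q$ is unramified, $\pi^{-1}(Q) = \{q_1, q_2\}$ consists of two smooth points of $D$; placing these as marked unramified points gives a pair in $\mt_{E, g_1; 1} \times \hti_{g_2; 1}$ with $g_1 + g_2 + 1 = g$, whose image under $\kappa^{{\rm u}}_{g_1, g_2}$, with the induced orderings on the branch points, recovers $(D \to C)$. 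If $Q$ is ramified, $\pi^{-1}(Q)$ is a single ramification point; after reordering the ramification data so that this point is the $(2g_1-2)^{th}$ on the $E$-side and the first on the $\mathbb{P}^1$-side, we similarly recover $(D \to C)$ from $\kappa^{{\rm r}}_{g_1, g_2}$ applied to the decomposition in $\mt_{E, g_1} \times \hti_{g_2}$.

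The main obstacle is verifying the claimed index ranges. For $\Du$, the bound $g_1 \ge 1$ follows from $E \subseteq C_1$ and $g_1 \le g-1$ from $C_2$ being non-empty (equivalently, $Q$ actually exists). For $\Dr$, once $Q$ has been resolved to a smooth point on $C_1$, Riemann--Hurwitz applied to the admissible double cover $D_1 \to C_1$ (with $p_a(C_1) = 1$) gives exactly $2g_1 - 2$ smooth ramification points, including the one now at $Q$; since this count is positive and even, $g_1 \ge 2$, matching the stated lower bound. Finally, the degenerate case $g_1 = 1$ in $\Du$ with $C_1 = E$ is accommodated by the two sub-cases of Case~1 after Definition~\ref{def: clutching u}: $D_1 \to E$ is either an \'etale double cover or the trivial disconnected cover $E \sqcup E \to E$, both of which are valid $k$-points of $\mt_{E, 1; 1}$.
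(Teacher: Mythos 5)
Your proof is essentially correct and follows the same conceptual outline as the paper's -- decompose the cover at a node of the base curve and land in the image of a clutching map -- but you take a slightly more direct route. The paper instead pushes forward along the forgetful morphism $\mt_{E,g} \to \Mb_{1,2g-2}$ and invokes the standard description of $\partial\Mb_{1,2g-2}$ in terms of clutching maps $\kappa_{i,j}$ (and the non-separating $\kappa_0$, which they rule out), then lifts the resulting decomposition back to admissible covers; you bypass that detour by observing directly that the dual graph of $C$ is a tree and decomposing there, which is cleaner. For the $g_1 \ge 2$ bound in the ramified case, the paper uses a parity argument ($j$ odd forces $i$ odd, hence $i \ge 1$), while you use Riemann--Hurwitz for the admissible cover $D_1 \to C_1$ -- both are fine.

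Two things worth flagging. First, the paper makes a point of choosing the decomposition so that $C_2$ is a \emph{single smooth} $\Po$ ("We can further choose $C_1$ and $C_2$ such that $C_2$ is a smooth genus $0$ curve, since $C_1$ is allowed to be degenerate"); you instead allow $C_2$ to be an arbitrary tree of $\Po$'s. Your version works only because the clutching-map domains $\mt_{E,g_1;1}$, $\hti_{g_2;1}$, etc.\ are the proper stacks of admissible covers and therefore contain boundary points where the base is itself degenerate; you are implicitly invoking that fact, while the paper's leaf-node choice sidesteps needing to. Second, a small imprecision: when $Q$ is unramified, the two points of $\pi^{-1}(Q)$ are nodes of $D$, not smooth points of $D$; they only become a pair of smooth marked points on $D_1$ (resp.\ $D_2$) after you normalize. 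This is standard clutching-map bookkeeping and doesn't affect the argument, but the phrasing "two smooth points of $D$" is technically off. Neither issue is a genuine gap.
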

\begin{proof}
Suppose $\pi: D \to C$ is in $\mb_{E,g}-M_{E,g}$. Then it must be in the image of $\mt_{E,g}-\mtc_{E,g}$.
Consider the forgetful morphism $\mt_{E,g}\to \Mb_{1, 2g-2}$ that records the target curve with the $(2g-2)$ smooth branch points.
Since $(C,R_1, \ldots, R_{2g - 2})$ is not smooth, it is represented by a point in $\Mb_{1, 2g-2}-\M_{1, 2g-2}$, with one of its irreducible components being $E$. We know that the boundary of $\Mb_{1, 2g-2}$ lies in the union of the image of the following clutching morphisms:

\begin{align*}
 \kappa_{1,i;0,j}: \Mb_{1, i+1} \times \Mb_{0, j+1} &\to \Mb_{1, i+j} = \Mb_{1,2g-2}, \\
    \kappa_{0}: \Mb_{0, 2g } &\to \Mb_{1, 2g-2}.
\end{align*}
Here $(i,j)$ runs through the pairs $i \geq 0$, $j \geq 2$, $i+j=2g-2$. The map $\kappa_{1,i;0,j}$ clutches two curves together at an ordinary double point, while $\kappa_{0}$ creates a node on one curve by identifying two marked points on it. %We denote the reduced image of $\kappa_{i,j}$ by $\Pi_{i,j}$ and the image of $\kappa_0$ as $\Pi_0$.

Notice that $(C,R_1, \ldots, R_{2g - 2})$ can not be in the image of $\kappa_{0}$ since $C$ should have $E$ as one of its irreducible components. Hence, there must exist $i \geq 0, j \geq 2$ such that $i+j=2g-2$ and $C= (C_1, R_1', \ldots, R_i', R_{i + 1}')\cup_{R_{i + 1}' = R_{j + 1}''}(C_2, R_1'', \ldots, R_j'', R_{j + 1}'')$ for some $(i + 1)$- and $(j + 1)$-pointed stable curves $(C_1, R_1', \ldots, R_i', R_{i + 1}')$ and $(C_2, R_1'', \ldots, R_j'', R_{j + 1}'')$ of genera $1$ and $0$, respectively. We can further choose $C_1$ and $C_2$ such that $C_2$ is a smooth genus-$0$ curve, since $C_1$ is allowed to be degenerate. Let $D_1=\pi^{-1}(C_1)$ and $D_2=\pi^{-1}(C_2)$ be the preimages of $C_1$ and $C_2$ under $\pi: D \to C$, and let $Q$ be the node on $C$ resulting from clutching. 
Note that $C_2$ has $j+1$ marked points, one of them is $Q$. We have the following possibilities for the configuration of $\pi: D \to C$.
\begin{enumerate}
    \item If $j$ is even: Write $j=2m$. Since $C_2$ is smooth of genus $0$, any $\Z/2\Z$-cover of $C_2$ must have an even number of the branching points. Therefore $Q$ is not a branching point.
    In this case, $\pi: D_2 \to C_2$ is a stable $\Z/2\Z$-cover branched at $j$ points, so this gives an element of $\hti_{m-1;1}$.
    Moreover, $E$ is a component of $C_1$ and $D_1 \to C_1$ is a stable $\Z/2\Z$-cover, branched at $i=2g-2-j=2(g-m-1)$ points, hence this gives an element of $\mt_{E,g-m;1}$. 
    Therefore, $D \to C$ is in $\Du_{g-m,m-1}$. The stability of $D_2$ implies that $j\geq 2$, so $m-1\geq 0$.
    \item If $j$ is odd: Write $j=2m+1$. Then $Q$ must be a branching point. In this case, $\pi: D_2 \to C_2$ is a stable~$\Z/2\Z$-cover branched at $j+1=2(m+1)$ points, so it is in $\hti_{m}$.
    Moreover, $E$ is a component of $C_1$ and $D_1 \to C_1$ is a stable $\Z/2\Z$-cover branched at $i+1=2(g-m-1)$ points, hence it is in $\mt_{E,g-m}$.
    Therefore, $D \to C$ is in $\Dr_{g-m,m}$.
    Since $i$ is odd, we have $i \geq 1$, implying $g-m \geq 2$. The stability of $C_2$ implies that $j\geq 2$, and it is odd, so $j\geq 3$. Thus we conclude~$m \geq 1$.
    \end{enumerate}
    Therefore $\pi: D \to C$ lies in the union of image of clutching maps.
\end{proof} 

\begin{example}[Description of the boundary of $\overline{M}_{E, 2}$]
Let $E $ be an arbitrary elliptic curve as above. 
From our description in Lemma \ref{lem: boundary is clutching}, the boundary $\overline{M}_{E, 2} - {M}_{E, 2}$ of $M_{E, 2}$ consists of curves that are double covers of $$(E, Q)\cup_{Q = Q'} (\Po, Q', R_1, R_2).$$ After applying an automorphism of $\Po$, we can assume $(Q', R_1, R_2)=(\infty, 0, 1)$. Then, the target curve is the clutch of $(E, Q)$ and $(\Po, \infty, 0, 1)$ with $Q \in E$ and $\infty\in \Po$ identified. This double cover is branched at $0$ and $1$. There are two kinds of curves in  $\overline{M}_{E, 2} - {M}_{E, 2}$:
\begin{enumerate}
    \item $D = (E', q_1, q_2)\cup_{q_1 = q_1', q_2 = q_2'} (\Po, q_1', q_2', r_1, r_2)$, where $E'$ is a curve of genus~$1$, such that $\pi: E' \to E$ is an \'etale double cover, $\pi^{-1}(Q) = \{q_1, q_2\}$, and $\pi':\Po \to \Po$ is the double cover branched at $0$ and $1$, with $\pi'^{-1}(\infty) = \{q_1', q_2'\}$, $\pi'^{-1}(0) = r_1$, and $\pi'^{-1}(1) = r_2$. Here $D$ has toric rank $1$.

    \item $D' = (E, Q)\cup_{Q = q_1'} (\Po, q_1', q_2', r_1, r_2)\cup_{q_2' = Q}(E, Q)$, with $ (\Po, q_1', q_2', r_1, r_2)$ as in the first case. Here $D'$ is of compact type. 
\end{enumerate}
We will use this description in Section \ref{Sec: p rank strat MEg}.
\label{example:bdry_of_ME2}
\end{example}

In the preceding example, the explicit descriptions of the boundary components show that they have codimension~$1$ in $\mb_{E,g}$ when $g = 2$. The following corollary demonstrates that the same holds for general~$g$.

\begin{corollary} \label{cor: smooth locus is dense}
For $g \geq 2$, the boundary of $\mb_{E,g}$ has codimension $1$ in $\mb_{E,g}$. Consequently, $M_{E,g}$ is open and dense in $\mb_{E,g}$.
\end{corollary}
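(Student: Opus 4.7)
The plan is to combine the explicit description of the boundary given by Lemma \ref{lem: boundary is clutching} with a dimension count on the source of each clutching map, using the fact (Theorem \ref{theorem:meg_properties_summary}) that $\mb_{E,g}$ is irreducible of dimension $2g-2$. Since $\mt_{E,g} \to \mb_{E,g}$ is étale, it suffices to show the analogous statement for $\mt_{E,g}$, i.e.\ that each closed substack $\Du_{g_1, g-1-g_1}$ (for $1 \le g_1 \le g-1$) and $\Dr_{g_1, g-g_1}$ (for $2 \le g_1 \le g-1$) has dimension exactly $2g-3$.

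First I would record the dimensions of the source factors of the two clutching maps. Theorem \ref{theorem:meg_properties_summary} gives $\dim \mt_{E,g_1;n} = 2g_1 - 2 + n$, while the stack $\hti_{g_2;n}$ parametrizes $(2g_2+2+n)$ distinguishable points on $\Po$ modulo $\mr{PGL}_2$, so $\dim \hti_{g_2;n} = 2g_2 - 1 + n$. Summing, the source of $\kappa^{\rm u}_{g_1,g_2}$ has dimension $(2g_1 - 1) + 2g_2 = 2(g_1+g_2) - 1 = 2g-3$, and the source of $\kappa^{\rm r}_{g_1,g_2}$ has dimension $(2g_1 - 2) + (2g_2 - 1) = 2(g_1+g_2) - 3 = 2g-3$. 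Consequently each boundary piece has dimension \emph{at most} $2g - 3$, giving codimension at least one.

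To upgrade this to codimension exactly one, I would show each clutching map is generically finite onto its image. This is essentially tautological: normalizing a nodal curve in the image recovers the two original components, with only a finite ambiguity arising from labeling the two preimages of the gluing node and from distributing the ordered branch points among the two components. Thus no dimension is lost and the image has dimension equal to the source, namely $2g - 3 = \dim\mb_{E,g} - 1$.

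Given this, the density of $M_{E,g}$ follows formally: $\mb_{E,g}$ is irreducible by Theorem \ref{theorem:meg_properties_summary}, and $M_{E,g}$ is a nonempty open substack (any smooth double cover of $E$ provides a point), so it is dense. The only real content is the generic finiteness of the clutching maps; everything else is a bookkeeping dimension count. I expect no genuine obstacle, but I would pay attention to the low-genus cases $g_1 = 1$ and $g_2 = 0$, where the generic fiber of $\kappa^{\rm u}$ may pick up extra automorphisms from the étale double cover component $E'$ or from $\mr{Aut}(\Po)$; these contributions are finite and do not affect the dimension.
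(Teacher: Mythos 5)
Your argument is correct and follows essentially the same route as the paper: a dimension count on the sources of the clutching maps from Lemma~\ref{lem: boundary is clutching} (giving each boundary piece dimension $2g-3$), combined with irreducibility of $\mb_{E,g}$ from Theorem~\ref{theorem:meg_properties_summary} to deduce density of the open substack $M_{E,g}$. The one point you treat more carefully than the paper is the generic finiteness of the clutching maps onto their images; the paper simply writes $\dim(\Du_{g_1,g_2-1}) = \dim(\mt_{E,g_1;1})+\dim(\hti_{g_2-1;1})$ as an equality without comment, whereas you flag and dispose of it. Note, though, that for the ``consequently'' part only the upper bound $\dim(\text{boundary}) \le 2g-3$ is needed, so that extra care is only relevant to the strict ``codimension $1$'' reading of the first sentence.
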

\begin{proof}
Let $g_1, g_2 \geq 1$ be integers such that $g_1 + g_2 = g$. We compute 
\begin{align*}
\dim(\Du_{g_1, g_2 - 1}) = \dim(\mt_{E,g_1;1})+\dim(\hti_{g_2 - 1;1}) = (2g_1-2)+1+(2g_2 - 3)+1=\dim(\mb_{E,g})-1, 
\end{align*}
while for $g_1 \geq 2$, we find that
\begin{align*}
\dim(\Dr_{g_1, g_2}) = \dim(\mt_{E,g_1})+\dim(\hti_{g_2}) =(2g_1-2)+(2g_2-1) =\dim(\mb_{E,g})-1. 
\end{align*}
Therefore, all loci $\Du_{g_1, g_2 - 1}$ and $\Dr_{g_1, g_2}$ have codimension $1$ in $\mb_{E, g}$, and Lemma \ref{lem: boundary is clutching} implies that the boundary of $\mb_{E, g}$ has codimension $1$ in $\mb_{E, g}$.  

Suppose that $M_{E,g}$ is not dense in $\mb_{E,g}$. Then there is a non-empty, open set $U \subseteq \mb_{E,g}$ that does not intersect $M_{E,g}$ so that $U \subseteq \mb_{E,g}-M_{E,g}$, and hence $\dim(U)\leq 2g-3$. On the other hand, Theorem \ref{theorem:meg_smooth} says that $\mb_{E,g}$ is irreducible of dimension $2g-2$. This implies $\dim(U)=2g-2$, which is a contradiction. To conclude, $M_{E,g}$ is dense in $\mb_{E,g}$.
\end{proof}

\subsection{The reduced moduli spaces $M_{E,g;n}^{\red} \subset \mb_{E,g;n}^{\red}$}\label{Sec: reduced moduli space}

In this section, we introduce a ``reduced'' moduli space $\mb_{E,g;n}^{\red}$ as a closed substack of $\mb_{E,g;n}$. To construct this space,
we consider the map $\pi: \mb_{E,g;n} \to E$ that sends a cover $D \to C$ to the sum of the branch points of the cover. Since one component of $C$ is identified with $E$ and all other components are copies of $\mathbb{P}^1$, this sum of points indeed lives in $\Jac(C) \cong \Jac(E) \cong E$.

\begin{definition} 
We define $\mb_{E,g;n}^{\red}$ as the fiber of the map $\pi$ over the identity $O_E \in E$.
We also denote the open substack of smooth covers $M_{E,g;n}^{\red} \coloneqq M_{E,g;n} \cap \mb_{E,g;n}^{\red}$.
\end{definition}

\begin{proposition}
$\mb_{E,g;n}^{\red}$ is a proper Deligne-Mumford stack of dimension $2g - 3 + n$.
\label{prop:moduli_red_is_proper}
\end{proposition}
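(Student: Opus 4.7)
The plan is to realize $\mb^{\red}_{E,g;n}$ as the fiber over $O_E$ of the morphism $\pi \colon \mb_{E,g;n} \to E$ and to derive the claimed properties from smoothness of $\pi$. Properness and the Deligne--Mumford property are automatic once $\pi$ is known to be a morphism of DM stacks: $\mb^{\red}_{E,g;n}$ is the preimage of a closed point and hence a closed substack of the proper DM stack $\mb_{E,g;n}$ (Theorem~\ref{theorem:meg_properties_summary}). The smoothness and the dimension $2g-3+n$ will follow once $\pi$ is shown to be smooth, since then $\mb^{\red}_{E,g;n}$ is smooth of dimension $\dim \mb_{E,g;n} - \dim E = 2g-3+n$.

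First I would check that $\pi$ is a well-defined morphism of stacks. On the universal target curve $\cC \to \mb_{E,g;n}$, the branch divisor is a relative effective Cartier divisor of degree $2g-2$ supported in the relative smooth locus; postcomposing with $\cC \to E$ and summing in the group $E$ yields $\pi$.

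The heart of the argument is smoothness of $\pi$. Since both source and target are smooth, it suffices to check that the differential $d\pi_x \colon T_x \mb_{E,g;n} \to T_{\pi(x)} E$ is surjective at every closed point $x$. I would produce, for each $\partial \in T_{\pi(x)} E$, a first-order deformation of $x = (D \to C, \Sigma_{\ur})$ that moves the image in $E$ of a single branch point $R_1 \in \Sigma_{\br}$ by $\partial$ while fixing the other branch and unramified points. On the smooth locus $M_{E,g;n}$ this is clean: a $\Z/2\Z$-cover of $E$ branched along a divisor $B$ is determined by a line bundle $L$ with $L^{\otimes 2} \cong \OO_E(B)$, and because $p > 2$ the isogeny $[2] \colon \Pic^0(E) \to \Pic^0(E)$ is \'etale, so the forgetful map from $M_{E,g;n}$ to the configuration space of branch and unramified divisors is \'etale of degree $4$. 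Composing with the Abel--Jacobi map $\mathrm{Sym}^{2g-2}(E) \to E$, which is a smooth $\mathbb{P}^{2g-3}$-bundle, then shows $\pi$ is smooth on $M_{E,g;n}$. For a boundary point, the same construction extends using the clutching description of Lemma~\ref{lem: boundary is clutching} together with the local model $y^2 = t - a$ of an admissible double cover near a simple branch point: the branch point still lies in the smooth locus of $C$, and the replacement $a \mapsto a + \epsilon\partial$ produces the required first-order deformation whose image under $d\pi$ is $\partial$.

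The main technical obstacle is justifying the branch-point-moving deformation at boundary points of $\mb_{E,g;n}$, where source and target are reducible and one must preserve the admissible-cover structure at the nodes. This amounts to checking that the forgetful-to-branch-points map remains \'etale (or at least smooth in the sense of relative tangent spaces) on the boundary components $\Du_{g_1,g_2}$ and $\Dr_{g_1,g_2}$, which in turn follows from the twisted stable maps setup developed in Appendix~\ref{appendix:geometry_meg}. Once this is granted, $\pi$ is smooth of relative dimension $2g-3+n$, and therefore $\mb^{\red}_{E,g;n} = \pi^{-1}(O_E)$ is smooth of the claimed dimension.
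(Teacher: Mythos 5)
Your overall plan—realizing $\mb^{\red}_{E,g;n}$ as $\pi^{-1}(O_E)$ and inheriting properness and the Deligne--Mumford property from $\mb_{E,g;n}$—matches the paper, but the two arguments diverge on how to get smoothness, and your boundary step has a genuine gap.

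The paper does \emph{not} prove that $\pi$ is smooth. Instead, it observes that all fibers of $\pi$ are isomorphic: for any $x \in E$, choosing $y$ with $(2g-2)\cdot y = x$ and translating by $y$ identifies $\pi^{-1}(O_E)$ with $\pi^{-1}(x)$. Since $\mb_{E,g;n}$ is smooth of dimension $2g-2+n$, the generic fiber is smooth of dimension $2g-3+n$, and then all fibers, including $\pi^{-1}(O_E)$, inherit this. This translation trick completely sidesteps any pointwise analysis of the boundary.

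Your approach instead tries to prove the stronger statement that $\pi$ is smooth everywhere by exhibiting a tangent vector of $\mb_{E,g;n}$ mapping to each $\partial \in T_{\pi(x)}E$, via a deformation that moves a single branch point $R_1$. On the interior $M_{E,g;n}$ this works, as you correctly argue via the \'etale forgetful map to the configuration space and the smooth Abel--Jacobi map. The problem is at the boundary: the branch points live on $C$, not on $E$, and when a branch point sits on a $\Po$ bubble, the stabilization map $C \to E$ contracts that bubble to the node. Moving the branch point within the bubble therefore has \emph{zero} effect on its image in $E$, and $d\pi$ kills the tangent vector you produce. This is not a corner case: on every point of $\Du_{1,g-2}$ (the $g_1 = 1$ stratum of Lemma~\ref{lem: boundary is clutching}), there are $0 = 2g_1 - 2$ branch points on the genus-1 part, so \emph{all} $2g-2$ branch points live on the contracted $\Po$, all map to the single node point $Q \in E$, and your local model deformation $a \mapsto a + \epsilon\partial$ contributes nothing to $d\pi$. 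To make your argument work there you would have to produce a different kind of deformation, one that moves the attaching node $Q$ itself (equivalently, smooths or moves the bubble), and verify its image under $d\pi$ is $\partial \cdot (2g-2)$; as written, your appeal to ``the twisted stable maps setup'' does not address this. The paper's translation argument is precisely designed to avoid having to do any such boundary analysis.
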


\begin{proof}
The claim that $\mb_{E,g;n}^{\red}$ is proper and Deligne-Mumford follows from the fact that $\mb_{E,g;n}$ is proper and Deligne-Mumford. To compute its dimension, we 
 observe that all the fibers of $\pi: \mb_{E,g;n} \to E$ are isomorphic. To get an isomorphism between $\pi^{-1}(O_E) = \mb_{E,g;n}^{\red}$ and $\pi^{-1}(x)$, take a point $y \in E$ such that $(2g - 2) \cdot y = x$ and use translation by $y$ to get an isomorphism between $\pi^{-1}(O_E)$ and~$\pi^{-1}(x)$. Because $\mb_{E,g;n}$ has dimension $2g - 2 + n$ (Theorem \ref{theorem:meg_smooth}) and all the fibers of $\pi$ are isomorphic, it follows that $\pi^{-1}(O_E) = \mb_{E,g;n}^{\red}$ has dimension $2g - 3 + n$, as desired.
%To prove smoothness, we observe that all the fibers of $\pi: \mb_{E,g;n} \to E$ are isomorphic: to get an isomorphism between $\pi^{-1}(O_E) = \mb_{E,g;n}^{\red}$ and $\pi^{-1}(x)$, take a point $y \in E$ such that $(2g - 2) \cdot y = x$ and use translation by $y$ to get an isomorphism between $\pi^{-1}(O_E)$ and~$\pi^{-1}(x)$. Because $\mb_{E,g;n}$ is smooth of dimension $2g - 2 + n$ (Theorem \ref{theorem:meg_smooth}), the generic fiber of~$\pi$ is smooth of dimension $2g - 3 + n$. Because all the fibers of $\pi$ are isomorphic, $\pi^{-1}(O_E) = \mb_{E,g;n}^{\red}$ is smooth of dimension $2g - 3 + n$, as desired.
\end{proof}

One intuitive reason to consider $\mb_{E,g;n}^{\red}$ is that we ideally would like to identify double covers of $E$ if they are the same after composing with an automorphism of $E$. Because $\Aut(E)$ is an extension of a finite constant group scheme by $E$ (where $E$ acts by translations), taking the quotient by the automorphism group of $E$ will reduce the dimension of our moduli space by 1. While $\mb_{E,g;n}^{\red}$ is not quite a quotient of $\mb_{E,g;n}$, it is a slice for the action of $\Aut(E)$. 
Therefore, $\mb_{E,g;n}^{\red}$ maps finitely and surjectively to $[\mb_{E,g;n}/\Aut(E)]$. The reason we use $\mb_{E,g;n}^{\red}$ rather than the quotient $[\mb_{E,g;n} / \Aut(E)]$ is that $\mb_{E,g;n}^{\red}$ is a substack of $\mb_{E,g;n}$, and hence it inherits the structure of a proper Deligne-Mumford stack. Furthermore, it is more straightforward to define the morphism in Proposition~\ref{forgetful map is quasi-finite} for $\mb_{E,g;n}^{\red}$ than for $[\mb_{E,g;n}/\Aut(E)]$.

\begin{proposition}\label{forgetful map is quasi-finite}
    We have the following morphism: 
    \[\mb_{E,g}^{\red} \xrightarrow{\varphi} \Mb_{g,2g-2} \xrightarrow{\pi} \Mb_{g}\]
    The first map $\varphi $ sends a cover $D \to C$ to its source curve $D$, with the $2g - 2$ ramification points as marked points.  The second map $\pi$ forgets the marked points and stabilizes the curve. Then $\pi \circ \varphi$ is quasi-finite on the smooth locus of $\mbr_{E,g}$ and generically finite on each boundary component of~$\mbr_{E,g}$.

\end{proposition}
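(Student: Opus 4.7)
The plan is to compute fibers of $\pi \circ \varphi$ explicitly and show they are finite, for every point on the smooth locus and generically on each boundary component. The key reduction is that a cover $D \to E$ of a fixed curve $D$ is essentially determined by a non-trivial involution $\sigma \in \Aut(D)$ with $D/\sigma \cong E$ together with an identification $\iota \colon D/\sigma \xrightarrow{\sim} E$ subject to the reduced branch-sum condition, and then taken modulo $\Aut(D)$.

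For smooth $D$ of genus $g \geq 2$, $\Aut(D)$ is finite, so there are only finitely many involutions $\sigma$ on $D$ with $D/\sigma \cong E$. For each such $\sigma$, the set of isomorphisms $D/\sigma \xrightarrow{\sim} E$ is an $\Aut(E)$-torsor; translation by $y \in E$ changes the branch sum by $(2g-2)y$, so the reduced condition restricts the translation ambiguity to the finite subgroup $E[2g-2]$. Combined with the finite rigid part $\Aut(E, O_E)$ of $\Aut(E)$, this gives only finitely many admissible $\iota$. Dividing by $\Aut(D)$ still leaves a finite fiber, proving quasi-finiteness of $\pi \circ \varphi$ on $M_{E,g}^{\red}$.

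For the boundary, by Lemma~\ref{lem: boundary is clutching} every component is some $\Du_{g_1, g_2}$ or $\Dr_{g_1, g_2}$, whose generic source curve decomposes (as detailed in Definition~\ref{def: clutching u}) into $\mt_{E, g_i; *}$- and $\hti_{g_j; *}$-type pieces glued along nodes. I would apply the fiber count above piece by piece: on an $\mt$-piece the smooth-case argument produces finitely many cover structures (with the prescribed nodes), and on an $\hti$-piece the double cover of $\Po$ is determined by its branch divisor, which modulo $\Aut(\Po)$ is already accounted for by the moduli of $\hti_{g_j; *}$ itself and so contributes no extra ambiguity over a given $D_{\mr{stab}}$. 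Since the generic source $D$ in any such component has finite automorphism group, the resulting count of covers over a generic $D_{\mr{stab}}$ is finite.

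The main obstacle is the case where a rational component $H$ of $D$ becomes unstable after forgetting the $2g-2$ ramification marks and therefore contracts in $D_{\mr{stab}}$. The sharpest instance is $\Du_{g_1, 0}$ (and the compact-type sub-stratum of $\Du_{1, 0}$), where $H$ is a $\Po$ carrying only its two clutching nodes and two ramification points. One might fear that the positions of the two ramification points on the contracted $H$ contribute continuous moduli invisible to $D_{\mr{stab}}$, but $\hti_{0; 1}$ is itself a single point, so $H$ together with its two distinguished nodes is rigid up to isomorphism; moreover, the two branch points on the corresponding $\Po$-component of $C$ contract to a common node $Q \in E$ and enter the reduced branch-sum equation as $2Q$, pinning $Q$ down modulo $E[2]$ once the contributions from the other components are fixed. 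A case-by-case inspection of the remaining small-genus strata, following the same counting scheme as in the smooth case, then completes the verification of generic finiteness on every boundary component.
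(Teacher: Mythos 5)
Your argument for the smooth locus is correct and makes the same fundamental reduction as the paper: a double cover $D\to E$ with $D$ fixed is encoded by an involution $\sigma\in\Aut(D)$ (finitely many, since $\Aut(D)$ is finite for $g\geq 2$) plus an identification $D/\sigma\xrightarrow{\sim}E$. You differ in how you kill the $\Aut(E)$-ambiguity: the paper first factors $\pi\circ\varphi$ through $[\mb_{E,g}/\Aut(E)]$ and uses that $\mb_{E,g}^{\red}\to[\mb_{E,g}/\Aut(E)]$ is finite, while you cut the $\Aut(E)$-torsor of identifications down directly via the reduced branch-sum condition, reducing translations to $E[2g-2]$ and leaving the finite $\Aut(E,O_E)$. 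Both give finiteness, and your version is arguably more explicit about where the finiteness comes from.

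Your boundary argument, however, has genuine gaps and does not constitute a proof. First, you say ``on an $\mt$-piece the smooth-case argument produces finitely many cover structures,'' but when $g_1=1$ (which occurs in $\Du_{1,g-2}$, $\Dr_{1,1}$ and $\Du_{1,0}$) the component $D_a$ is a genus-$1$ curve with infinite automorphism group, and the smooth-case count of involutions does not apply. Appealing to finiteness of $\Aut(D_{\mathrm{stab}})$ does not rescue this directly, since $D_{\mathrm{stab}}$ may have a contracted $\Po$-component, and in any case you still need to bound the set of admissible identifications $D_a/\sigma\xrightarrow{\sim}E$. The paper handles this separately by observing that $[\mb_{E,1}/\Aut(E)]$ has only four points (its cases (d)--(f)). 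Second, the sentence asserting that the $\hti$-piece ``contributes no extra ambiguity \dots already accounted for by the moduli of $\hti_{g_j;*}$'' conflates the moduli stack with the fiber of $\pi\circ\varphi$ over a fixed $D$; what must be shown is that, given the abstract curve $H=D_b$, the data $(H\to\Po,\text{node identifications})$ has only finitely many choices modulo the relevant automorphisms, and this again fails to be immediate when $g(H)\leq 1$ (for instance the paper needs the cross-ratio of branch points in case (b) and a separate argument when a branch point vs.\ a non-branch point is clutched). You flag that a ``case-by-case inspection'' would be needed to finish; that inspection is precisely the content of the paper's proof, so as written the proposal establishes the smooth case but not the boundary statement.
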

\begin{proof}
This can be proved using a careful case-by-case analysis, as is done in the appendix, Proposition~\ref{forgetful map is quasi-finite - appendix}. 
\end{proof}

\begin{remark}
The preceding proposition is consistent with \cite[Theorem 3.7]{schmittvanzelm} and %its consequence 
\cite[Proposition 3.13]{schmittvanzelm}, which describe the dimension of boundary components of the whole locus of Galois $G$-covers in $\overline{\mathcal{M}}_g$. 
\end{remark}

\section{The $p$-rank stratification of $\B_{E, g}$}\label{Sec: p rank strat MEg}

Let $E = (E, O_E)$ be a fixed (smooth) elliptic curve over an algebraically closed field $k$ of characteristic~$p>2$ with $p$-rank $f_E$. Recall that there are two possibilities: $f_E = 1$ if $E$ is ordinary, and $f_E = 0$ if $E$ is supersingular.  
Consider the locus $\B_{E, g}$ of smooth curves of genus $g\geq 2$ that are double covers of $E$. Our goal is to describe the $p$-rank stratification of $\B_{E, g}$. We achieve this by analyzing its closure in~$\overline{\M}_g$ using techniques similar to those in \cite{faber2004complete}, \cite{glass2005hyperelliptic}, and \cite{Achter_2011}. As demonstrated in the proof of Theorem \ref{theorem:p_rank_stratification_MEg}, the cases when $E$ is ordinary and when $E$ is supersingular differ significantly, with the latter case requiring particular attention. 

A smooth curve $D$ of genus $g \geq 2$ is called bielliptic if there exists an elliptic curve $E'$ and a double cover $D \to E'$. We denote by $\mathcal{B}_g$ the locus of bielliptic curves in $\mathcal{M}_g$. Given an elliptic curve~$E'$, $\mathcal{B}_{E', g}$ can be viewed as a subset of $\mathcal{B}_g$.  
Consequently, we can describe the $p$-rank stratification of the entire locus $\mathcal{B}_g$ of bielliptic curves of genus $g$.

As we have seen in Proposition \ref{forgetful map is quasi-finite}, there is a morphism $\mb_{E, g}^{\red} \to \overline{\mathcal{M}}_g$ that is quasi-finite on the smooth locus and generically quasi-finite on every boundary component.
Denote by $\Bb_{E, g}$ the image of this morphism and note that $\B_{E,g}$ coincides with the image of $M_{E,g}^{\red}$. Since $\mb_{E, g}^{\red}$ is a proper Deligne-Mumford stack by Proposition \ref{prop:moduli_red_is_proper} and $M_{E,g}^{\red} \subseteq \mb_{E, g}^{\red}$ is open and dense, it follows that $\Bb_{E, g} \subseteq \overline{\mathcal{M}}_g$ is the closure of $\B_{E, g} \subset \mathcal{M}_g$. We summarize some properties of $\B_{E,g}$ and  $\overline{\B}_{E, g}$ below.

\begin{lemma}
\label{lem:Bbeq_is_proper_and_irreducible}
For any $g \geq 2$, the moduli space $\overline{\mathcal{B}}_{E,g}$ is irreducible, proper, and has dimension~$2g - 3$.
\end{lemma}
\begin{proof}
Note that $\Bb_{E,g}$ is irreducible for any $g \geq 2$,  because the map $\mbr_{E,g}\to \Bb_{E,g}$ factors through $[\mb_{E,g}/\Aut(E)]$ and it was shown in Lemma \ref{Lem: Meg is irr} that $\mb_{E,g}$ is irreducible. The remaining properties follow from Propositions \ref{prop:moduli_red_is_proper} and \ref{forgetful map is quasi-finite}.
\end{proof}

\begin{lemma}
\label{lem:Beq_is_quasi_affine_and_irreducible}
For any $g \geq 2$, the moduli space $\mathcal{B}_{E,g}$ is irreducible, has dimension $2g - 3$, and does not contain any positive-dimensional complete subvarieties.
\end{lemma}
\begin{proof}
Lemma \ref{lem:Bbeq_is_proper_and_irreducible} and Proposition \ref{forgetful map is quasi-finite} imply that $\B_{E,g}$ is irreducible. 
Moreover, note that the coarse moduli space of $\B_{E,g}$ is quasi-affine, and consequently, $\mathcal{B}_{E,g}$ cannot contain positive-dimensional complete subvarieties. Indeed, let $\mt_{E,g}^{\circ}$ denote the open locus of $\mt_{E,g}$ where the source curve is smooth. Then $\B_{E,g}$ is the image of $[\mt_{E,g}^{\circ}/E]$ under a quasi-finite and proper morphism (hence it induces a finite morphism on the coarse moduli space). The morphism is quasi-finite according to Proposition \ref{forgetful map is quasi-finite}, and it is proper because it is the restriction of $[\mt_{E,g}/E] \to \Bb_{E,g}$, which are both proper. Hence, it suffices to show that the coarse moduli space of $[\mt_{E,g}^{\circ}/E]$ is quasi-affine. If we let $\mathrm{PConf}^{2g - 2}(E)$ be the ordered configuration space of $2g - 2$ points on $E$, then there is a proper \'{e}tale map $\mt^{\circ}_{E,g} \to \mathrm{PConf}^{2g - 2}(E)$ that records the branch points of a cover, inducing a finite map on coarse moduli spaces. Now $$\mathrm{PConf}^{2g - 2}(E) \cong E \times \mathrm{PConf}^{2g - 3}(E - \{O_E\}) \text{ via } (x_1,\ldots,x_{2g - 2}) \leftrightarrow (x_1,(x_2 - x_1,\ldots,x_{2g - 2} - x_1)),$$ which implies  $\mathrm{PConf}^{2g - 2}(E)/E \cong \mathrm{PConf}^{2g - 3}(E~-~\{O_E\})$. Finally, $$[\mathrm{Conf}^{2g - 2}(E)/E] \cong [\mathrm{PConf}^{2g - 2}(E)/(E \times S_{2g - 3})] \cong \mathrm{PConf}^{2g - 3}(E - \{O_E\})/S_{2g - 3}.$$
     The coarse space of $\mathrm{PConf}^{2g - 3}(E - \{O_E\})/S_{2g - 3}$ is affine because $E - \{O_E\}$ is affine and thus the coarse spaces of $[\mathrm{Conf}^{2g - 2}(E)/E]$ and $[\mt^{\circ}_{E,g}/E]$ are affine.    
\end{proof}
%\label{rem: meg is quasi-affine}
%\label{Rem: Beg is irr}

\subsection{The boundary of $\B_{E, g}$}

Let $\Delta = \Mb_g - \M_g = \Delta_0 \cup \Delta_1 \cup \ldots \cup \Delta_{\lfloor g/2 \rfloor}$,
denote the boundary of $\Mb_g$, with $\Delta_i$ its irreducible components. The generic point of $\Delta_0$ corresponds to an irreducible curve $C_0$ with an ordinary double point, whose normalization $\Tilde{C}_0$ is a genus-$(g - 1)$ curve, while the generic point of $\Delta_i$ corresponds to a reducible curve with components of genus $i$ and $g - i$ for any $1\leq i \leq \left \lfloor \frac{g }{2} \right \rfloor$.
By abuse of notation, we use $\Du_{g_1, g_2 - 1}$, $\Dr_{g_1, g_2}$ for $g_1, g_2 \geq 0$ with $g_1 + g_2 = g$ to denote the image of $\Du_{g_1, g_2 - 1}$, $\Dr_{g_1, g_2}$ under the forgetful morphism $\mb_{E, g}^{\red} \to \overline{\mathcal{M}}_g$.

It follows from the description in Section \ref{section:boundary_description} that each boundary component consists of isomorphism classes of curves that lie in the closure of a locus of double covers of the configuration 
\begin{equation}
    (E, O_E)\cup_{O_E = \infty} (\Po, \infty)
    \label{eqn:double_covers_bdry}
\end{equation}
with exactly $2g - 2$ distinct smooth branch points and that curves corresponding to elements in the loci $\Du_{g_1, g_2 - 1}$ (resp. $\Dr_{g_1, g_2}$) are unramified (resp. ramified) above the node.
We see that $\Bb_{E, g} - \B_{E, g}$ consists of the union of the loci $\Du_{g_1, g_2 - 1}$ and $\Dr_{g_1, g_2}$. The dimensions of these loci equal $2g - 4$ by Proposition \ref{forgetful map is quasi-finite}.
Let 
$\Duct_{1, g-2} \subseteq \Du_{1, g-2}$ 
denote the union of the components of $\Du_{1, g-2}$ whose generic points are each represented by a curve of compact type,
and denote 
$\xi_{1, g-2} = \Du_{1, g-2} - \Duct_{1, g-2}$.
We have the following relations: 
$$\xi_{1, g - 2} \subseteq \Delta_0 \text{ and } \Du_{g_1, g_2 - 1} \subseteq \Delta_0, \text{ for all }2 \leq g_1 \leq g - 1 \text{ satisfying } g_1 + g_2 = g;$$
$$ \Duct_{1, g-2}, \Dr_{g - 1, 1} \subseteq \Delta_1 \quad \text{and} \quad \Dr_{i, g - i} \subseteq \Delta_{\min\{i,g-i\}}, \text{ for all } 2\leq i \leq g - 2.$$

\begin{example}[Description of the boundary $\Bb_{E, 2} - \B_{E, 2}$]
Let $E = (E, O_E)$ be an arbitrary elliptic curve as above, and let $f_E$ be its $p$-rank. In Example \ref{example:bdry_of_ME2}, we describe the boundary of $\overline{M}_{E, 2} - {M}_{E, 2}$ of $M_{E, 2}$, which we use to describe its image 
in $\Mb_2$ here. The locus $\Bb_{E, 2} - \B_{E, 2}$ is $0$-dimensional and it consists of isomorphism classes of curves that are double covers of 
$$(E, O_E)\cup_{O_E = \infty} (\Po, \infty),$$ 
ramified above $0, 1 \in \Po$ and unramified otherwise. We find two kinds of these curves (which represent elements of $\xi_{1, 0} \subseteq \Delta_{0}$ and $\Duct_{1, 0} \subseteq \Delta_1$ respectively):
\begin{enumerate}
    \item $D_1 = (E', q_1, q_2)/\{q_1 = q_2\},$ where $\pi: E' \to E = (E, O_E)$ is an \'etale double cover, and $q_1, q_2 \in E'$ two distinct points lying above $O_E \in E$, i.e., $\pi^{-1}(O_E) = \{q_1, q_2\}$; or   
    \item $D_2 = (E, O_E)\cup_{O_E = O_E} (E, O_E)$, where two copies of $(E, O_E)$ are clutched together with their marked points $O_E$ being identified. 
\end{enumerate}
We note that there are exactly $3$ curves of the first kind and a unique curve of the second kind. Namely, the \'etale double covers of $E$ correspond to the choices of $2$-torsion points on $E$. Since $\#E[2](k) = 4$, there are $3$ non-trivial $2$-torsion points on $E$ defining different \'etale double covers $E' \to E$, while $O_E$ is the fourth $2$-torsion point, and it defines a trivial double cover $(E, O_E)\sqcup (E, O_E) \to (E, O_E)$.  Since $f_{E'} = f_{E}$, note that $f_{D_1} = f_{E} + 1$ (see \cite[Section 9.2, Example 8]{blr}), while $f_{D_2} = 2\cdot f_E$.
\label{example:bdry_of_ME2_in_M2}
\end{example}

\subsection{Computing the dimensions of the $p$-rank strata of $\Bb_{E, g}$}

Let $V_f(\Bb_{E, g}) \subseteq \Bb_{E, g}$ denote the $p$-rank $\leq f$ locus of $\Bb_{E, g}$.  
To obtain a lower bound on $\dim V_f(\Bb_{E, g})$ for $g \geq 2$, we use the purity of the $p$-rank stratification, a standard tool in this context. We rely on a version by Faber and van der Geer, based on a result of Oort, later generalized to the Newton polygon stratification as de Jong-Oort's purity.

\begin{lemma}[{\cite[Lemma 2.1]{faber2004complete}, \cite[1.6]{Oort1974}, \cite[Theorem 4.1]{de2000purity}}]\label{Lem: purity we are using}
Let $B$ be an irreducible scheme over $k$ and let $\mathcal{D} \to B$ be a family of stable curves over $B$. Let $f$ be the $p$-rank of the generic fiber $D_\eta$. Then the closed subset of $B$ over which the $p$-rank of the fiber is at most $f - 1$ is either empty or pure of codimension $1$ in $B$.
\end{lemma}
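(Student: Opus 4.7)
The plan is to reduce to the purity of the Newton polygon stratification for Barsotti-Tate groups, due to de Jong and Oort.

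First I would pass from the family $\mathcal{D} \to B$ of stable curves to its relative generalized Jacobian $\mathcal{J} := \mathrm{Pic}^0(\mathcal{D}/B)$, a semi-abelian scheme over $B$. Each fiber fits in an exact sequence $0 \to T_b \to \mathcal{J}_b \to A_b \to 0$ with $T_b$ a torus of dimension equal to the toric rank $t_b$ of $D_b$ and $A_b$ an abelian variety of dimension $g - t_b$; one verifies that $f_{D_b} = t_b + f_{A_b}$, which is precisely the multiplicity of the slope $0$ in the Newton polygon of the $p$-divisible group $\mathcal{J}_b[p^\infty]$. The relative $p$-divisible group $\mathcal{J}[p^\infty] \to B$ is a Barsotti-Tate group of constant height $2g$ (the toric/abelian decomposition varies with $b$, but the total height does not), so the Newton polygon stratification is well-defined on $B$ and the $p$-rank is recovered from it.

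Next I would invoke \cite[Theorem 4.1]{de2000purity}: over an irreducible Noetherian base $S$ equipped with a Barsotti-Tate group of generic Newton polygon $\xi_0$, the closed locus where the NP strictly exceeds $\xi_0$ is either empty or pure of codimension $1$ in $S$. The closed subset $V = \{b \in B : f_{D_b} \leq f - 1\}$ is a union of NP strata whose slope-$0$ multiplicity is at most $f - 1$, so it lies inside the non-generic NP locus. The shallowest strata in $V$ are obtained from $\xi_0$ by a single elementary change that swaps one slope $0$ and its dual slope $1$ for a pair of intermediate slopes; these strata have codimension exactly $1$ by de Jong-Oort. Every deeper NP stratum inside $V$ lies in the closure of one of these codim-$1$ strata, so each irreducible component of $V$ has its generic point in a codim-$1$ NP stratum. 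Consequently $V$ is a finite union of codim-$1$ irreducible closed subsets, i.e.\ pure of codimension $1$.

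The main obstacle is ensuring that de Jong-Oort's purity applies uniformly over all of $B$, in particular across boundary loci of $\Mb_g$ where the toric rank of $\mathcal{J}_b$ jumps. This is secured by the constant height of $\mathcal{J}[p^\infty]$: it is a genuine BT group over $B$, and the purity statement is insensitive to how its slope filtration decomposes into toric and abelian parts. As an alternative route when $B$ is contained in the smooth locus $\mathcal{M}_g$, one can pull back Oort's original purity result on $\mathcal{A}_g$ \cite{Oort1974} along the Torelli morphism and then extend across the boundary stratum by stratum.
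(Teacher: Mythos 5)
The paper does not reprove this lemma; it cites it, so I can only judge your blind argument on its own merits. It contains a genuine gap, and it is precisely at the step you flag as "the main obstacle."

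You assert that $\mathcal{J}[p^\infty] \to B$ is "a Barsotti--Tate group of constant height $2g$," and that this is what allows you to apply \cite[Theorem~4.1]{de2000purity} uniformly over $B$, including over boundary points where the toric rank jumps. This is false. If $\mathcal{J}_b$ is a semi-abelian variety of dimension $g$ with torus part of dimension $t_b$ and abelian part $A_b$ of dimension $g - t_b$, then $\mathcal{J}_b[p^\infty]$ is an extension of $A_b[p^\infty]$ (height $2(g - t_b)$) by $\mu_{p^\infty}^{t_b}$ (height $t_b$), hence has height $2g - t_b$, not $2g$. As $t_b$ jumps up under specialization, the height drops, so $\mathcal{J}[p^n]$ is quasi-finite but not finite flat over $B$ and $\mathcal{J}[p^\infty]$ is not a $p$-divisible group over $B$. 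De Jong--Oort's theorem is stated for an honest Barsotti--Tate group over a Noetherian integral base and cannot be invoked directly. The case $B \subseteq \mathcal{M}_g$ is indeed covered by Oort's original purity via Torelli, but the entire point of the lemma in this form (families of \emph{stable} curves) is to handle exactly the locus where the toric rank jumps; saying one will "extend across the boundary stratum by stratum" is a restatement of the difficulty rather than a proof, since on different toric-rank strata the abelian parts have different dimensions and the strata themselves have positive codimension, so local constancy of the Newton polygon/height is lost.

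A secondary, smaller inaccuracy: for a semi-abelian variety the $p$-rank as defined in this paper, $\dim \Hom(\mu_p, \mathcal{J}_b) = t_b + f_{A_b}$, is the slope-$1$ multiplicity of the Newton polygon of $\mathcal{J}_b[p^\infty]$, not the slope-$0$ multiplicity; since $\mu_{p^\infty}$ has slope $1$ and the Newton polygon of a non-abelian semi-abelian variety is not symmetric, these two multiplicities genuinely differ ($f_{A_b}$ versus $t_b + f_{A_b}$). That would need to be fixed even if the height issue were resolved. To repair the argument along the lines you envision, one needs an actual device for compensating the jumping toric rank, for instance stratifying $B$ by toric rank, applying Oort/de Jong--Oort to the abelian parts over each stratum, and then controlling how $p$-rank and codimension interact across strata (this is essentially what Faber--van der Geer's Lemma~1.2 does), or replacing $\mathcal{J}[p^\infty]$ by an object of constant height coming from a Raynaud/Mumford extension, which is not a literal $p$-divisible group over $B$ and would require a separate justification that purity holds for it.
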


Let $\overline{\mathcal{H}}_g$ denote the locus of stable hyperelliptic curves of genus $g$, and let $V_f(\overline{\mathcal{H}}_g) \subseteq \overline{\mathcal{H}}_g$ be its 
$p$-rank $\leq f$ locus. 
Throughout the section, we will need the following result by Glass and Pries.

\begin{lemma}[{\cite[Theorem 1, Proposition 2]{glass2005hyperelliptic}}]
\label{lemma:glass_pries}
Let $g \geq 2$. For any $0\leq f \leq g$, the locus $V_f(\overline{\mathcal{H}}_g)$ is pure of dimension $g - 1 + f$ in $\overline{\mathcal{H}}_g$. 
\end{lemma}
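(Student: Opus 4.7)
The plan is to prove matching lower and upper bounds of $g-1+f$ on every component of $V_f(\overline{\mathcal{H}}_g)$, combining the purity of the $p$-rank stratification with an induction on $g$ that mirrors the strategy Faber--van der Geer used for $\overline{\mathcal{M}}_g$ in \cite[(2.3)]{faber2004complete}.

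First I would establish nonemptiness of $V_f(\overline{\mathcal{H}}_g)$ for every $0 \le f \le g$ by explicit inductive clutching of hyperelliptic curves. Given any stable hyperelliptic curve of genus $g-1$ and $p$-rank $f'$, clutching it at a Weierstrass point to an elliptic curve of $p$-rank $f-f' \in \{0,1\}$ (using a supersingular or ordinary elliptic curve as appropriate) produces an admissible stable hyperelliptic genus-$g$ curve whose Jacobian is isogenous to the product of the component Jacobians, hence of $p$-rank exactly $f$. With nonemptiness in hand, Lemma~\ref{Lem: purity we are using} applied iteratively inside the irreducible stack $\overline{\mathcal{H}}_g$ of dimension $2g-1$ shows that each successive drop in $p$-rank is a codimension-one closed condition, giving the lower bound $\dim V_f(\overline{\mathcal{H}}_g) \ge (2g-1)-(g-f) = g-1+f$.

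For the matching upper bound I would induct on $g$, with base case $g = 2$ where $\overline{\mathcal{H}}_2 = \overline{\mathcal{M}}_2$ so the statement is exactly \cite[(2.3)]{faber2004complete}. For the inductive step, let $W$ be a component of $V_f(\overline{\mathcal{H}}_g)$ and suppose for contradiction that $\dim W > g-1+f$. Iterating purity yields a subvariety $W_0 \subseteq W \cap V_0$ of dimension $> g-1$. The open locus $\mathcal{H}_g$ has quasi-affine coarse space (via its finite cover by the configuration space of $2g+2$ points on $\mathbb{P}^1$), so it contains no complete positive-dimensional subvarieties, which forces $W_0$ to meet the boundary $\partial \overline{\mathcal{H}}_g$. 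Each boundary component is, up to finite maps, the image of a clutching morphism from a product $\overline{\mathcal{H}}_{g_1;n_1} \times \overline{\mathcal{H}}_{g_2;n_2}$, where the node is either ramified (Weierstrass-type, compact type, $g_1+g_2 = g$) or unramified (non-Weierstrass-type, contributing one unit of toric rank, $g_1+g_2 = g-1$). In each case the $p$-rank of the clutched curve equals the sum of the $p$-ranks of the components plus the toric contribution, so the inductive hypothesis (applied to the pointed hyperelliptic moduli) bounds the dimension of $W_0 \cap \partial \overline{\mathcal{H}}_g$ by $g-1$, contradicting the inequality above.

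The main obstacle will be the boundary step. The natural induction has to be carried out on the pointed moduli $\overline{\mathcal{H}}_{g;n}$ rather than on $\overline{\mathcal{H}}_g$ alone, since the normalized components of a boundary curve carry marked points that must be accommodated. This means the statement of the lemma must be strengthened to include $n$ marked points and the induction reset accordingly, and the correct dimensions of the clutching images (ramified vs.\ unramified node) must be verified case by case to ensure the boundary bound is sharp enough to close the induction.
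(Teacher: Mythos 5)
Note first that the paper does not contain its own proof of this lemma: it is cited directly as \cite[Proposition 2.2]{glass2005hyperelliptic}, so there is no in-paper argument to compare against. Your proposal essentially tries to re-derive the Glass--Pries result by porting Faber--van der Geer's scheme to $\overline{\mathcal{H}}_g$, which is indeed the right overall strategy (and is the same template the paper itself uses for $\overline{\mathcal{B}}_{E,g}$ in Theorem~\ref{theorem:p_rank_stratification_MEg}). The lower bound via the chain $W_f\subseteq W_{f+1}\subseteq\cdots\subseteq\overline{\mathcal{H}}_g$ together with Lemma~\ref{Lem: purity we are using}, and the nonemptiness by clutching, are both correct.

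There is, however, a genuine gap in your upper-bound step. You write ``Iterating purity yields a subvariety $W_0 \subseteq W\cap V_0$ of dimension $>g-1$,'' but Lemma~\ref{Lem: purity we are using} only says that the locus of strictly smaller $p$-rank is \emph{either empty or} pure of codimension one. Nothing guarantees that $V_{f-1}\cap W$, $V_{f-2}\cap W$, etc.\ are nonempty, so the descent to $V_0$ is unjustified; indeed, $W$ could a priori consist entirely of curves of $p$-rank exactly $f$. The detour through $V_0$ is also unnecessary. The argument the paper uses in Theorem~\ref{theorem:p_rank_stratification_MEg} (and which closes the induction cleanly here) is: since the coarse space of $\mathcal{H}_g$ is affine, $W$ meets the boundary $\partial\overline{\mathcal{H}}_g$; since $\overline{\mathcal{H}}_g$ is smooth and $\partial$ is a divisor, every component of $W\cap\partial$ has codimension $\le 1$ in $W$; and the inductive hypothesis applied to the clutching descriptions gives $\dim(W\cap\partial)\le\dim V_f(\partial\overline{\mathcal{H}}_g)\le g-2+f$, whence $\dim W\le g-1+f$. (Your stated boundary bound of ``$g-1$'' for the $V_0$ case should be $g-2$; as written, $\dim W_0\le 1+(g-1)=g$ produces no contradiction with $\dim W_0>g-1$.) Your observation about marked points is correct and worth keeping: the unramified-node degenerations, where the toric rank jumps by one and $g_1+g_2=g-1$, require one non-branch marked point on each factor, so the inductive statement must be phrased for $\overline{\mathcal{H}}_{g;n}$; by contrast, for the Weierstrass-type (ramified) node the gluing point is already one of the $2g_i+2$ branch points, so no extra marking is needed in that case.
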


We start by computing the dimensions of the loci $V_f(\Bb_{E, g})$ for $g = 2$.

\begin{lemma}
\label{lemma:ME2}
The locus $V_f({\Bb}_{E, 2})$ is pure of dimension $f - f_E$ for any $f_E \leq f \leq 1 + f_E$.
\end{lemma}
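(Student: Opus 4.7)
The plan is to combine dimension theory, purity of the $p$-rank, and the explicit boundary analysis from Example~\ref{example:bdry_of_ME2_in_M2}. First I would record that $\Bb_{E,2}$ is irreducible by Remark~\ref{Rem: Beg is irr} and of dimension $1 = 2g-3$ by Propositions~\ref{prop:moduli_red_is_proper} and~\ref{forgetful map is quasi-finite}. Next, for any smooth double cover $\pi: D \to E$ of genus~$2$, the injection $\pi^*:\Jac(E)\hookrightarrow \Jac(D)$ (up to isogeny) combined with Poincar\'e reducibility yields $\Jac(D)\sim E\times E'$ for an elliptic curve $E'$, the Prym; consequently $f_D = f_E + f_{E'} \in \{f_E,\,f_E+1\}$, which already pins the $p$-rank on $\Bb_{E,2}$ to the asserted range.

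The case $f = f_E + 1$ amounts to showing that the generic $p$-rank equals $f_E + 1$, so $V_{f_E+1}(\Bb_{E,2}) = \Bb_{E,2}$ is pure of dimension~$1$. This follows from the Prym map $P: \B_{E,2} \to \mathcal{A}_1$, $D\mapsto E'$, being non-constant and hence dominant onto the irreducible $1$-dimensional stack $\mathcal{A}_1$; density of the ordinary locus then forces the generic Prym to be ordinary. Non-constancy of $P$ can be checked either by a direct computation of the $j$-invariant of $E'$ as a function of the branch locus, or by noting that the three type-$D_1$ boundary points in Example~\ref{example:bdry_of_ME2_in_M2} are limits of smooth covers whose Pryms approach the three pairwise non-isomorphic \'etale double covers of $E$, so $P$ cannot be constant near them.

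For the harder case $f = f_E$, I would apply purity (Lemma~\ref{Lem: purity we are using}) to the universal family over $\mbr_{E,2}$, which is smooth irreducible of dimension~$1$ by Proposition~\ref{prop:moduli_red_is_proper}. Since the generic $p$-rank there is $f_E + 1$, the sublocus where it drops to $f_E$ is either empty or pure of codimension~$1$, i.e.\ $0$-dimensional. Pushing forward under the generically finite surjection $\mbr_{E,2}\to\Bb_{E,2}$ from Proposition~\ref{forgetful map is quasi-finite} preserves dimension, giving $\dim V_{f_E}(\Bb_{E,2})\leq 0$.

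The remaining (and main) obstacle is non-emptiness of $V_{f_E}(\Bb_{E,2})$, which I would split into two cases. When $f_E = 0$, the compact-type boundary curve $D_2 = (E,O_E)\cup_{O_E = O_E}(E,O_E)$ from Example~\ref{example:bdry_of_ME2_in_M2} has $p$-rank $2f_E = 0$, producing the desired point. When $f_E = 1$, the boundary contains no curve of $p$-rank $\leq 1$, so I would appeal again to dominance of $P$: the supersingular locus of $\mathcal{A}_1$ is a nonempty finite set, and any preimage under $P$ yields a smooth double cover $D$ whose Prym is supersingular, whence $f_D = 1 + 0 = f_E$. The subtlest step is the justification of non-constancy (hence dominance) of the Prym map $P$, which underpins both the generic-$p$-rank computation and the ordinary-$E$ case of non-emptiness.
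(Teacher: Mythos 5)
Your Prym-map strategy is genuinely different from the paper's. The paper fixes the generic $p$-rank using lower semicontinuity against the boundary point $D' = (E', q_1, q_2)/\{q_1 = q_2\}$ (which has $p$-rank $f_E + 1$), and for the $f_E = 1$ non-emptiness it invokes Oort's theorem that Ekedahl--Oort strata are quasi-affine: the proper one-dimensional stack $\overline{\B}_{E,2}$ cannot lie entirely in the ordinary E--O stratum, so some point has smaller $p$-rank, and that point is interior since both boundary curves $D_1, D_2$ have $p$-rank $2$ when $f_E = 1$. Your alternative is attractive, but as written it has two gaps.

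First, your justification for non-constancy of $P$ is factually wrong. The Pryms of smooth covers degenerating to a type-$D_1$ boundary point do \emph{not} approach the \'etale double covers $E'$ of $E$. Since $E' \to E$ is an isogeny, $E'$ carries the \emph{pullback} part of $\Jac$, not the Prym part; the Prym part degenerates to the toric part $\mathbb{G}_m$ of $\Jac(D_1)$, i.e.\ the Prym $j$-invariant runs off to the cusp of $\overline{\mathcal{M}}_{1,1}$. This still shows non-constancy (arguably more convincingly), but the stated reason is incorrect. Your fallback of computing the $j$-invariant of the Prym directly in terms of the branch divisor would also settle it.

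Second, and more seriously for the case $f_E = 1$, $f = f_E$: dominance of $P$ does not by itself give a preimage of the supersingular point of $\mathcal{A}_1$. The image of a dominant map from a one-dimensional irreducible quasi-projective source to $\mathcal{A}_1$ is only cofinite, and a priori the supersingular $j$-invariant could lie in the excluded finite set. To close this, you should extend $P$ to a proper map $\bar{P}$ on the compactification $\overline{M}^{\red}_{E,2} \to \overline{\mathcal{M}}_{1,1}$; non-constancy plus properness then gives surjectivity. You must then also observe that the boundary points map only to the cusp (from the $D_1$-type curves) and to $[E]$ (the Prym of the compact-type curve $D_2 = (E,O_E) \cup (E,O_E)$ is $E$ itself), so when $f_E = 1$ the supersingular $j$-invariant can only be hit by a \emph{smooth} cover. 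This works, but it is noticeably more involved than the paper's quasi-affineness argument, which sidesteps the Prym map entirely.
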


\begin{proof}
Let $E$ be an elliptic curve in characteristic $p>2$ with $p$-rank $f_E$. Recall that ${\Bb}_{E, 2}$ is irreducible and proper by Lemma \ref{lem:Bbeq_is_proper_and_irreducible}, while $\B_{E, 2}$ cannot contain complete subvarieties by Lemma~\ref{lem:Beq_is_quasi_affine_and_irreducible}. Therefore, $\Bb_{E, 2}-{\B}_{E, 2} \neq \varnothing$. One boundary point of $\Bb_{E, 2}-{\B}_{E, 2}$ corresponds to a curve~${D' = (E', q_1, q_2)/\{q_1 = q_2\}}$ as in Example \ref{example:bdry_of_ME2_in_M2} with $f_{D'} = f_E + 1$ (computed using \cite[Section 9.2, Example 8]{blr}). 
Therefore, by the purity result (Lemma~\ref{Lem: purity we are using}), the generic $p$-rank of ${\Bb}_{E, 2}$ equals $f_E + 1$. 

It remains to show that $V_{f_E}(\Bb_{E, 2})$ is non-empty. By Example \ref{example:bdry_of_ME2_in_M2}, $D = (E, O_E)\cup_{O_E = O_E}(E, O_E)$, with $f_D = 2f_E$, represents a curve lying in the boundary of ${\Bb}_{E, 2}$. If $f_E = 0$, then $f_D = 0$. Otherwise, assume $f_E = 1$. The locus of ordinary curves in ${\Bb}_{E, 2}$ is quasi-affine; this follows from \cite[6.1]{Oort1999}. Therefore, ${\Bb}_{E, 2}$ contains an element that corresponds to a curve $D$ with lower $p$-rank, $f_D = f_E = 1$. Moreover, $D$ is a smooth curve. This follows from Example \ref{example:bdry_of_ME2_in_M2}, using that both $D_1$ and $D_2$ have $f_{D_1} = f_{D_2} = 2$ when $f_E = 1$.
\end{proof}

\begin{remark}
Let $f_E = 0$.
By \cite[Corollary 1]{mumford}, given a double cover $D \to E$ and its Prym variety $E'$, there is a double cover $D \to E'$, and $\mathrm{Jac}(D) \sim E \times E'$, an isogeny of degree $2$. Since $p \neq 2$, this shows that every smooth curve representing an element of $V_0(\Bb_{E, 2})$ must be a superspecial curve. Because there are only finitely many smooth superspecial curves of genus $g$ in characteristic~$p$, this provides an alternative proof that $\dim V_0(\Bb_{E, 2}) = 0$.
\label{remark:superspecial_genus2}
\end{remark}

\begin{corollary}
Let $E$ be an elliptic curve in characteristic $p>3$ with $p$-rank $f_E$. For any $f_E \leq f \leq f_E + 1$, there exists a smooth curve $D$ of genus $2$ with $p$-rank $f_D = f$ which is a double cover of $E$.
\label{cor:smooth_genus2}
\end{corollary}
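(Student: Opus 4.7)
The overall strategy is to upgrade Lemma \ref{lemma:ME2} from a dimension statement to an existence statement for smooth curves, by checking case by case whether each $p$-rank stratum of $\Bb_{E, 2}$ meets the open locus $\B_{E, 2}$ of smooth double covers.

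For the generic $p$-rank $f = f_E + 1$, I would combine Lemma \ref{lemma:ME2} with the fact that $\Bb_{E,2}$ is a proper irreducible $1$-dimensional substack and its boundary $\Bb_{E,2} - \B_{E,2}$ is $0$-dimensional by Example \ref{example:bdry_of_ME2_in_M2}. Together with the $0$-dimensionality of the deeper stratum $V_{f_E}(\Bb_{E,2})$, this implies that the locus of smooth double covers of exact $p$-rank $f_E + 1$ is a dense open subset of $\Bb_{E,2}$, and in particular non-empty.

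For the minimal $p$-rank $f = f_E$, Lemma \ref{lemma:ME2} yields a non-empty $0$-dimensional stratum, and I would compare the target value $f_E$ with the $p$-ranks $f_{D_1} = f_E + 1$ and $f_{D_2} = 2 f_E$ of the boundary curves from Example \ref{example:bdry_of_ME2_in_M2}. When $f_E = 1$, neither $D_1$ nor $D_2$ lies in $V_1(\Bb_{E,2})$, so every element of this stratum automatically belongs to $\B_{E,2}$ and is smooth. When $f_E = 0$, however, the boundary point $D_2$ does have $p$-rank $0$, so purely dimension-theoretic arguments cannot rule out the pathological possibility $V_0(\Bb_{E,2}) = \{D_2\}$.

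The main obstacle is therefore the supersingular case $f_E = 0$, $p > 3$: one must independently produce a smooth element of $V_0(\Bb_{E,2})$. I would defer this to Section \ref{sec:applications}, where explicit smooth double covers $D \to E$ of genus $2$ with $p$-rank $0$ are constructed in characteristic $p > 3$. By Remark \ref{remark:superspecial_genus2}, any such smooth $D$ is necessarily superspecial, and the impossibility of producing one in characteristic $3$---where $V_0(\Bb_{E, 2})$ reduces to the single singular point $D_2$---accounts precisely for the exclusion of the case $(p, g, f) = (3, 2, 0)$ and the hypothesis $p > 3$.
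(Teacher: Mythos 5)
Your overall strategy mirrors the paper's: reduce to the single hard case $(f_E, f) = (0, 0)$ by observing that the proof of Lemma~\ref{lemma:ME2} already yields smooth curves for the other three combinations. Your case analysis for $f = f_E + 1$ (density of the smooth, generic-$p$-rank locus) and for $f = f_E = 1$ (both boundary curves of Example~\ref{example:bdry_of_ME2_in_M2} are ordinary, so the $p$-rank $1$ stratum avoids the boundary) is sound and coincides with how the paper argues.

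The gap is in the hard case. You write that you would defer the existence of a smooth supersingular genus~$2$ double cover of $E$ (for $p > 3$) to Section~\ref{sec:applications}, but the explicit constructions there (Lemma~\ref{lem: p rank 0 construction} and the propositions that follow) are tuned to higher genus and do not directly produce the genus~$2$ curve you need: the paper's own proof of Corollary~\ref{cor:smooth_genus2} supplies the required construction in place. Concretely, write $E : y^2 = x(x-1)(x-\lambda)$ and use that for $p > 3$ the Deuring polynomial in $\lambda$ has degree $(p-1)/2 \ge 2$ with distinct roots (see Silverman, V.4), so there is a supersingular $E' : y^2 = x(x-1)(x-\lambda')$ with $\lambda' \ne \lambda$. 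Then the normalization $D$ of $E \times_{\Po} E'$ is smooth of genus~$2$, maps to $E$ with degree~$2$, and by the Kani--Rosen decomposition (Lemma~\ref{lem: quotient of C}) has $\Jac(D) \sim E \times E'$, hence is supersingular. Without this (or an equivalent) construction, the reduction you carried out leaves the corollary unproved precisely in the one case that cannot be settled by the boundary analysis of Example~\ref{example:bdry_of_ME2_in_M2}. Your observations about Remark~\ref{remark:superspecial_genus2} (superspecialness is forced) and the role of Ekedahl's theorem in excluding $(p,g,f) = (3,2,0)$ are correct and match the paper's Remark~\ref{remark:ekedahl_p=3}.
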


\begin{proof}
It is enough to consider the case $f_E = 0, f_D = 0$ since the proof of the preceding lemma implies the result in other situations. Write $E: y^2 = x(x - 1)(x - \lambda)$. For $p> 3$, there is $\lambda' \neq \lambda$ such that $E': y^2 = x(x - 1)(x - \lambda')$ is supersingular; see \cite[V.4]{silverman}. Consider the double covers $E \to \mathbb{P}^1$ and $E' \to \mathbb{P}^1$ given by the inclusion $k(x) \subset k(x, y)$ of function fields, and the fiber product $E \times_{\mathbb{P}^1} E'$ taken with respect to these maps.
The normalization of $E\times_{\mathbb{P}^1}E'$ is a supersingular genus~$2$ curve with a map of degree~$2$ to $E$ and the result follows.
\end{proof}

\begin{remark}
\label{remark:ekedahl_p=3}
The first part of the proof of Lemma \ref{lemma:ME2} establishes that there is a smooth curve $D$ of genus $2$ which is a double cover of an ordinary elliptic curve $E$ in characteristic $3$. However, if $E$ is a supersingular elliptic curve in characteristic $3$, the locus $V_0 (\B_{E, 2})$ of smooth curves $D$ in $\B_{E, 2}$ with $f_D = 0$ is empty. This follows from \cite[Theorem~1.1]{ekedahl}, which states that there are no smooth superspecial curves of genus~$2$ in characteristic $3$ and Remark \ref{remark:superspecial_genus2}.
\end{remark}

The following lemmas are needed to determine the dimension of $V_f(\overline{\B}_{E,g})$ for general $g$. 

\begin{lemma}
\label{lemma:prankbdry_res1}
For any $g \geq 2$ and $2f_E \leq f \leq g - 2 + 2f_E$ (resp. $f_E + 1 \leq f \leq g - 1 + f_E$), we have $$\dim V_{f}(\Duct_{1, g-2}) =  g - 2 + f - 2f_E \text{ (resp. }\dim V_{f}(\xi_{1, g - 2}) =  g - 3 + f - f_E \text{)}.$$  
\end{lemma}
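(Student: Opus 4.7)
The plan is to exploit the explicit description of $\Duct_{1, g-2}$ and $\xi_{1, g-2}$ from Section~\ref{section:boundary_description}: read off the $p$-rank of a generic source curve from its combinatorial structure, and then count the remaining continuous moduli using the hyperelliptic purity result Lemma~\ref{lemma:glass_pries}.

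For $\Duct_{1, g-2}$, a generic source curve is the compact-type curve $D = E \cup_{O_E = q_1'} H \cup_{q_2' = O_E} E$, with $H$ a smooth hyperelliptic curve of genus $g - 2$ and $\{q_1', q_2'\}$ the fiber in $H$ over an unramified node $Q' \in \Po$. Since $D$ is of compact type, $\Jac(D) = \Jac(E)^{2} \times \Jac(H)$, so $f_D = 2 f_E + f_H$, and $V_f(\Duct_{1, g-2})$ is cut out by the condition $f_H \leq f - 2 f_E$. The remaining continuous parameters are the hyperelliptic curve $H \in \overline{\mathcal{H}}_{g-2}$ and the node $Q' \in \Po$. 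By Lemma~\ref{lemma:glass_pries}, $\dim V_{f - 2 f_E}(\overline{\mathcal{H}}_{g-2}) = g - 3 + f - 2 f_E$, and adding the one dimension from $Q'$ gives $\dim V_f(\Duct_{1, g-2}) = g - 2 + f - 2 f_E$.

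For $\xi_{1, g-2}$, a generic source curve has the form $D = E' \cup H$, where $\pi \colon E' \to E$ is an \'etale $\Z/2\Z$-cover and the two points $\pi^{-1}(Q) = \{q_1, q_2\}$ are identified with $\{q_1', q_2'\}$ on $H$. Now $D$ has toric rank $1$ with abelian part $\Jac(E') \times \Jac(H)$, so the standard $p$-rank formula for a stable curve gives $f_D = f_{E'} + f_H + 1 = f_E + f_H + 1$, using that $E' \to E$ is a separable isogeny and therefore preserves the $p$-rank. Thus $V_f(\xi_{1, g-2})$ is the locus where $f_H \leq f - f_E - 1$. The \'etale cover $E'$ (a choice of nonzero $2$-torsion point on $E$) and, in the reduced moduli space, the clutching point $Q \in E[2g - 2]$ each contribute only finitely many discrete choices, so the continuous moduli reduce again to $H$ and $Q'$. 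Lemma~\ref{lemma:glass_pries} then yields $\dim V_f(\xi_{1, g-2}) = (g - 4 + f - f_E) + 1 = g - 3 + f - f_E$.

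The main technical point beyond Lemma~\ref{lemma:glass_pries} is confirming that these fibrations over $\overline{\mathcal{H}}_{g-2}$ truly contribute a one-dimensional fiber in the image inside $\overline{\mathcal{M}}_g$, rather than collapsing through the forgetful morphism. This is ensured by the generic quasi-finiteness of the boundary strata of $\mb_{E,g}^{\red}$ over $\overline{\mathcal{M}}_g$ established in Proposition~\ref{forgetful map is quasi-finite}, together with the $p$-rank formula $f_D = t + \sum_i f_{D_i}$ for a stable curve of toric rank $t$ with normalized components $D_i$ already invoked above.
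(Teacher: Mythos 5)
Your proposal is correct and follows essentially the same route as the paper: describe the generic point of each boundary stratum explicitly, compute the $p$-rank via the compact-type decomposition (resp.\ the toric-rank formula $f_D = t + \sum_i f_{D_i}$), and read off the dimension from Lemma~\ref{lemma:glass_pries} plus the one extra dimension from the unramified clutching point, with Proposition~\ref{forgetful map is quasi-finite} ensuring these dimensions survive the passage to $\overline{\mathcal{M}}_g$. One small point you should address explicitly: the invocation of Lemma~\ref{lemma:glass_pries} for $\overline{\mathcal{H}}_{g-2}$ requires $g - 2 \geq 2$, i.e.\ $g \geq 4$, so the edge cases $g = 2$ and $g = 3$ need separate treatment (the paper handles $g = 2$ via Lemma~\ref{lemma:ME2} and $g = 3$ by replacing $\overline{\mathcal{H}}_{g-2,1}$ with $\overline{\mathcal{M}}_{1,2}$, noting that the dimension formula still holds in those cases).
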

\begin{proof}
 For $g\geq 4$, elements of $V_f(\Du_{1, g - 2})$ correspond to double covers of $(E, O_E)\cup_{O_E, \infty} (\Po, \infty)$, ramified above distinct points $R_i\in \Po - \{\infty\}, 1\leq i \leq 2g - 2$ and unramified otherwise, with $p$-rank at most $f$. Each such element corresponds to one of the following two types of clutches:
 \begin{enumerate}
     \item \label{itm:prob}  $(E, O_E)\cup_{O_E = q_1} (H, q_1, q_2)\cup_{q_2 = O_E} (E, O_E)$ representing an element of $V_{f}(\Duct_{1, g-2})$, 
where $H$ represents an element of $V_{f - 2f_E}(\overline{\HH}_{g - 2})$, and 
$q_1, q_2 \in H$ are two distinct points that are in the same orbit under the involution of $H$.
     \item  $(E', q_1', q_2')\cup_{q_1' = q_1, q_2' = q_2} (H, q_1, q_2)$ representing an element of $V_{f}(\xi_{1, g - 2})$,
where $H$ is as in the previous case, and $E' \to E$ an \'etale double cover with $q_1', q_2' \in E'$ two distinct points in the same orbit under the induced bielliptic involution. 
 \end{enumerate}
The dimension of the locus of curves of the first form is 
$\dim V_{f - 2f_E}(\overline{\HH}_{g - 2, 1}) = g - 2 + f - 2f_E$, while for the second form, it is $\dim V_{f - f_E - 1}(\overline{\HH}_{g - 2, 1}) = g - 3 + f - f_E$. 
For $g = 2$, the result follows from the proof of Lemma \ref{lemma:ME2}, while for $g = 3$, it follows similarly as for $g \geq 4$. The only difference is that $H = (H, O_H)$ is an elliptic curve, and $q_1, q_2 \in H - \{O_H\}$ are two distinct points mapping to the same point under the double cover $H \to \Po$ induced by the divisor $2 \cdot O_H$. Thus, instead of $\dim V_{f - 2f_E}(\overline{\HH}_{g - 2, 1})$ and $\dim V_{f - f_E - 1}(\overline{\HH}_{g - 2, 1})$, we consider $\dim V_{f - 2f_E}(\overline{\mathcal{M}}_{1, 2})$ and $\dim V_{f - f_E - 1}(\overline{\mathcal{M}}_{1, 2})$ in the dimension computations.
\end{proof}

\begin{lemma}
\label{lemma:prankbdry_res2}
Let $g \geq 3$ and let $f_E \leq f \leq g - 1 + f_E$.
Assume that for all $2\leq g' < g$ and $1\leq f' \leq f$, $V_{f'}(\overline{\B}_{E, g'})$ is pure of dimension $g' - 2 + f' - f_E$. Then, for any $g_1 \geq 2$ and $g_2 \geq 1$ such that $g_1 + g_2 = g$ the loci $V_{f}(\Dr_{g_1, g_2})$ and $V_{f}(\Du_{g_1, g_2 - 1})$ are all pure of dimension $g - 3 + f - f_E$.    
\end{lemma}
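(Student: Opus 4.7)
The plan is to parametrize each boundary locus as a product (up to finite data) of a piece of $\Bb_{E,g_1}$ with a hyperelliptic piece, and read off $p$-rank strata dimensions componentwise from the inductive hypothesis and Lemma \ref{lemma:glass_pries}. A $k$-point of $\Dr_{g_1,g_2}\subset\overline{\mathcal{M}}_g$ is a compact type curve $D\cup_r H$ with $D\in \Bb_{E,g_1}$ clutched to $H\in \overline{\HH}_{g_2}$ at a common ramification point, so its $p$-rank equals $f_D+f_H$. A $k$-point of $\Du_{g_1,g_2-1}$ is a curve $D\cup_{q_1,q_2} H$ with $D\in \Bb_{E,g_1}$ and $H\in \overline{\HH}_{g_2-1}$ glued at two unramified points over common marked points $Q\in E$ and $Q'\in \mathbb{P}^1$. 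Because the dual graph in the second case has first Betti number $1$, the generalized Jacobian is a $\mathbb{G}_m$-extension of $\Jac(D)\times\Jac(H)$, and the $p$-rank is $f_D+f_H+1$.

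For $\Dr_{g_1,g_2}$, I would stratify $V_f(\Dr_{g_1,g_2})$ by $(f_1,f_2)=(f_D,f_H)$: up to finitely many choices of which ramification points to clutch, the closed sublocus with $f_D\leq f_1$ and $f_H\leq f_2$ is parametrized by $V_{f_1}(\Bb_{E,g_1})\times V_{f_2}(\overline{\HH}_{g_2})$. By the inductive hypothesis and Lemma \ref{lemma:glass_pries}, this product is pure of dimension
\[(g_1-2+f_1-f_E)+(g_2-1+f_2)=g-3+(f_1+f_2)-f_E.\]
Unioning over pairs with $f_1+f_2=f$ covers $V_f(\Dr_{g_1,g_2})$ (since the smaller closed strata sit inside the larger ones), and each such piece has dimension exactly $g-3+f-f_E$, so the union is pure of this dimension. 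Preservation of dimensions from the parameter space to $\overline{\mathcal{M}}_g$ follows from the generic quasi-finiteness of the forgetful morphism on each boundary component (Proposition \ref{forgetful map is quasi-finite}).

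For $\Du_{g_1,g_2-1}$, the same strategy applies with the constraint $f_1+f_2=f-1$ and two extra dimensions coming from the free choices of $Q\in E$ and $Q'\in \mathbb{P}^1$; each piece has dimension $(g_1-2+f_1-f_E)+1+(g_2-2+f_2)+1=g-3+f-f_E$, giving the analogous purity statement. Note that $V_{f_E}(\Du_{g_1,g_2-1})=\emptyset$, since $f_D+f_H+1\geq f_E+1>f_E$, and hence is vacuously pure. One then checks that an admissible pair $(f_1,f_2)$ realizing the maximum exists for each $f$ in the stated range: e.g.\ $(f_1,f_2)=(f_E,f-f_E)$ for $\Dr_{g_1,g_2}$ whenever $f-f_E\leq g_2$, and $(f-g_2,g_2)$ otherwise, with analogous choices for $\Du_{g_1,g_2-1}$ provided $f\geq f_E+1$.

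The main subtlety I expect, and where the argument requires care, is the supersingular situation $f_E=0$: for $\Dr_{g_1,g_2}$ at $f=0$ (and for $\Du_{g_1,g_2-1}$ at $f=1$), the only admissible pair involves $f_1=0$, which is not covered by the inductive hypothesis $f'\geq 1$. For these corner cases I would combine the upper bound $\dim V_0(\Dr_{g_1,g_2})\leq g-3$ coming from the parametrization with a lower bound obtained by iterated application of the de Jong--Oort purity theorem (Lemma \ref{Lem: purity we are using}) to the family over $\Dr_{g_1,g_2}$ (whose generic $p$-rank is $g-1+f_E$): starting from the known generic $p$-rank and descending $f$ one step at a time, with non-emptiness of each intermediate stratum for $f\geq f_E+1$ guaranteed by the product construction, one recovers the claimed equidimensionality at the corner.
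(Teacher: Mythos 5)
Your argument is essentially the paper's: both parametrize $\Dr_{g_1,g_2}$ and $\Du_{g_1,g_2-1}$ by (finite-to-one images of) products $V_{f_1}(\Bb_{E,g_1})\times V_{f_2}(\overline{\HH}_{g_2})$ (resp.\ the $1$-pointed variants for $\Du$, accounting for the two extra degrees of freedom from the unramified gluing points), stratify by the partition $(f_1,f_2)$ of $f$ (resp.\ $f-1$, the $+1$ coming from the toric rank of the dual graph), sum dimensions using the inductive hypothesis and Lemma~\ref{lemma:glass_pries}, and invoke the finiteness of the clutching/forgetful maps (Proposition~\ref{forgetful map is quasi-finite}) to transfer the count to $\overline{\mathcal{M}}_g$. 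Your observation that $V_{f_E}(\Du_{g_1,g_2-1})=\emptyset$, and your bookkeeping of which $(f_1,f_2)$ realize the dimension, are consistent with the paper's computation.

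The one place your proposal differs is your treatment of the "corner case" $f_E=0$, $f_1=0$ (needed e.g.\ to evaluate $\dim V_{f_E}(\Bb_{E,g_1})$ when $f_E=0$, which the hypothesis as literally stated with range $1\leq f'\leq f$ does not cover). You are right that, read literally, the hypothesis leaves this out. However your proposed repair does not close the gap: the iterated de Jong--Oort purity argument gives only the \emph{lower} bound $\dim V_0(\Dr_{g_1,g_2})\geq g-3$, while the needed \emph{upper} bound $\dim V_0(\Dr_{g_1,g_2})\leq g-3$ still rests on the very fact $\dim V_0(\Bb_{E,g_1})\leq g_1-2$ that you are trying to avoid assuming --- the parametrization alone does not deliver it without that input. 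The intended reading (consistent with the induction in Theorem~\ref{theorem:p_rank_stratification_MEg}, whose base case Lemma~\ref{lemma:ME2} already establishes the $f'=f_E$ instance) is that the hypothesis ranges over $f_E\leq f'\leq f$; with that reading the corner case disappears and no extra argument is needed. So: same approach as the paper, with a correctly spotted imprecision in the stated hypothesis, but a fix that does not actually work; the right fix is simply to take the hypothesis with lower bound $f_E$ rather than $1$.
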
 
\begin{proof}
 Let us fix $f_E \leq f \leq g - 1 + f_E$, $g_1 \geq 2$ and $g_2 \geq 1$ such that $g_1 + g_2 = g$. (Note that the outcomes of the computations below are also valid in the cases $g_1 = g - 1$ and $g_1 = g - 2$ since $\dim V_0(\overline{\mathcal{M}}_{0, 3}) = 0$ and $\dim V_{f_2}(\overline{\mathcal{M}}_{1, 1}) = f_2$, and using the identifications $\overline{\HH}_{0, 1} \cong \overline{\mathcal{M}}_{0, 3}$ and $\overline{\HH}_{1} \cong \overline{\mathcal{M}}_{1, 1}$ similarly as in Section \ref{section:boundary_description} and the proof of Lemma~\ref{lemma:prankbdry_res1}.)

 We first compute the dimension of  $V_{f}(\Dr_{g_1, g_2})$. Let $f_1$ and $f_2$ be numbers that satisfy $f_E\leq f_1 \leq g_1$, $0\leq f_2 \leq g_2$,  and $f_1 + f_2 = f$.   
 Every element of $V_f(\Dr_{g_1, g_2})$ corresponds to a clutch of a curve $D$ whose isomorphism class lies in $V_{f_1}(\Bb_{E, g_1})$ and a curve $H$ from $V_{f_2}(\overline{\HH}_{g_2})$, identified at points $r \in D$ and $r' \in H$, the ramification points of the structural morphisms $D \to E$ and $H \to \Po$, for some choice of $(f_1, f_2)$ as above. (Note that there are only finitely many choices for $r$ and $r'$ as above given $D$ and $H$.) By the assumption, for every such a choice of $(f_1, f_2)$ it holds that $$\dim V_{f_1}(\Bb_{E, g_1}) + \dim V_{f_2}(\overline{\HH}_{g_2}) = (g_1 - 2 + f_1 - f_E) + (g_2 - 1 + f_2) = g - 3 + f - f_E, $$ and therefore, $\dim V_{f}(\Dr_{g_1, g_2}) = g - 3 + f - f_E$, for any $2\leq g_1 \leq g - 1$.

Now, we compute the dimension of $V_{f}(\Du_{g_1, g_2 - 1})$. Let $f_1$ and $f_2$ be numbers that satisfy $f_E\leq f_1 \leq g_1$, $0\leq f_2 \leq g_2 - 1$,  and $f_1 + f_2 = f - 1$. Let $D$ be a curve whose isomorphism class lies in $V_{f_1}(\Bb_{E, g_1})$, let $H$ be a hyperelliptic curve from $V_{f_2}(\overline{\HH}_{g_2 - 1})$, and $q_1 \in D$ and $q_1' \in H$ their points that are not ramification points of the structural morphisms $D \to E$ and $H \to \Po$. Denote by $q_2 \in D$ (resp. $q_2' \in H$) the image of $q_1$ (resp. $q_1'$) under the structural bielliptic (resp. hyperelliptic) involution. Every element of $V_f(\Du_{g_1, g_2 - 1})$ corresponds to a clutch of such curves $D$ and $H$ with $q_1$ and $q_1'$ being identified, as well as $q_2$ and $q_2'$. By the assumption we find that 
$$\dim V_{f_1}(\Bb_{E, g_1, 1}) + \dim V_{f_2}(\overline{\HH}_{g_2 - 1, 1}) = (g_1 - 2 + f_1 - f_E) + (g_2 - 2 + f_2) + 2 = g - 3 + f - f_E, $$ which implies $\dim V_{f}(\Du_{g_1, g_2 - 1}) = g - 3 + f - f_E$, for any $2\leq g_1 \leq g - 1$. 
\end{proof}

In particular, the preceding two lemmas imply that 
\begin{equation}
\dim (V_f(\Bb_{E, g}) \cap \Delta_0) \leq g - 3 + f - f_E, 
\label{eqn:intersection_with_delta0}
\end{equation}
for $g\geq 3$, under the assumption that $V_{f'}(\overline{\B}_{E, g'})$ is pure of dimension $g' - 2 + f' - f_E$ for all $2\leq g'<g$ and $f_E + 1\leq f' \leq g' - 1 + f_E$. 

Before proving our main result, let us introduce some notation we will use in the proof of the case $f_E = 0$. Let $j: \Mb_g \to \Tilde{\mathcal{A}}_g$ denote the Torelli morphism, where~$\Tilde{\mathcal{A}}_g$ is a fixed smooth toroidal compactification of the moduli space of principally polarized abelian varieties $\mathcal{A}_g$, as in \cite{alexeev}. We can now turn to the proof of our main result.

\begin{theorem}
\label{theorem:p_rank_stratification_MEg}
The locus $V_f(\overline{\B}_{E, g})$ is pure of dimension $g - 2 + f - f_E$, for any $g \geq 2$ and $f_E \leq f \leq g - 1 + f_E$. 
\end{theorem}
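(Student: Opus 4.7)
My plan is to proceed by induction on the genus $g$. The base case $g = 2$ is Lemma \ref{lemma:ME2}. For the inductive step, fix $g \geq 3$ and assume the theorem for all smaller genera and all relevant $p$-ranks.

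\textbf{Lower bound via purity.} By Remark \ref{Rem: Beg is irr} and Proposition \ref{prop:moduli_red_is_proper}, $\Bb_{E,g}$ is irreducible of dimension $2g - 3$. I would first confirm that its generic $p$-rank equals $g - 1 + f_E$: the upper bound $f_D \leq g - 1 + f_E$ follows from the Prym isogeny $\Jac(D) \sim E \times P_D$ with $\dim P_D = g - 1$, while Lemma \ref{lemma:prankbdry_res1} provides a non-empty locus $V_{g-1+f_E}(\xi_{1,g-2}) \subseteq \Bb_{E,g}$ of dimension $2g - 4$, realizing the upper bound. Iterated application of the purity theorem (Lemma \ref{Lem: purity we are using}) to the universal family on $\mb_{E,g}^{\red}$ then forces every irreducible component of $V_f(\Bb_{E,g})$ to have dimension at least $2g - 3 - (g - 1 + f_E - f) = g - 2 + f - f_E$.

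\textbf{Upper bound and boundary intersection.} Let $W$ be an irreducible component of $V_f(\Bb_{E,g})$ and suppose for contradiction that $\dim W > g - 2 + f - f_E$. Since the coarse space of $\B_{E,g}$ is quasi-affine (Remark \ref{rem: meg is quasi-affine}) and $W$ is proper, $W$ cannot be contained in $\B_{E,g}$; hence $W$ meets the boundary. Lifting via Proposition \ref{forgetful map is quasi-finite} to a preimage component $\tilde W \subseteq \mb_{E,g}^{\red}$ of the same dimension, the principal ideal theorem applied to the Cartier boundary divisor in the smooth proper Deligne-Mumford stack $\mb_{E,g}^{\red}$ forces each top-dimensional component of $\tilde W \cap \partial$ to have codimension one in $\tilde W$ and to lie in a single boundary stratum: one of $\Du_{g_1,g_2-1}$, $\Dr_{g_1,g_2}$, $\xi_{1,g-2}$, or $\Duct_{1,g-2}$. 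By Lemmas \ref{lemma:prankbdry_res1} and \ref{lemma:prankbdry_res2} combined with the inductive hypothesis, the $p$-rank-at-most-$f$ locus of each such stratum has dimension at most $g - 3 + f - f_E$, with the sole exception of $\Duct_{1,g-2}$ in the supersingular case $f_E = 0$, where it has dimension $g - 2 + f$. In the ordinary case $f_E = 1$, this directly contradicts $\dim \tilde W - 1 > g - 3 + f - f_E$, completing the argument.

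\textbf{The supersingular case.} When $f_E = 0$, the above analysis only yields $\dim W \leq g - 1 + f$, so I expect the main obstacle is ruling out the borderline case $\dim W = g - 1 + f$, which occurs precisely when the top-dimensional part of $\tilde W \cap \partial$ is concentrated in $\Duct_{1,g-2}$. My approach would be a local deformation-theoretic argument at a generic point $[D_0] \in V_f(\Duct_{1,g-2})$: the curve $D_0 = E \cup H \cup E$ has Jacobian $E^2 \times \Jac(H)$ of $p$-rank $f_H = f$, and the normal direction to $\Duct_{1,g-2}$ inside $\mb_{E,g}^{\red}$ parametrizes the smoothing of the single target node. The aim is to show that, for $f < g - 1$, the generic such smoothing gives a smooth bielliptic double cover of $E$ of strictly larger $p$-rank, so that $V_f$ has codimension at least two (rather than one) in $\mb_{E,g}^{\red}$ near $[D_0]$. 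This would follow from the structure of the Prym map $\B_{E,g} \to \cA_{g-1}$ sending $D$ to $P_D$ together with the purity of the $p$-rank stratification of $\cA_{g-1}$, combined with the observation that hyperelliptic Pryms lie in the specific locus covered by Lemma \ref{lemma:glass_pries}. This forces $\dim W \leq g - 2 + f$ and completes the induction.
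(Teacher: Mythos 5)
Your overall structure—induction on $g$ with base case Lemma~\ref{lemma:ME2}, lower bound via iterated purity and irreducibility of $\Bb_{E,g}$, upper bound via boundary intersection—matches the paper's proof, and your treatment of the ordinary case $f_E=1$ is essentially identical to the paper's. The genuine divergence, and the genuine gap, is in the supersingular case $f_E=0$.

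You correctly identify the obstacle: Lemma~\ref{lemma:prankbdry_res1} gives $\dim V_f(\Duct_{1,g-2}) = g-2+f$, so the naive bound $\dim W \le \dim(W\cap\Delta)+1$ only yields $\dim W \le g-1+f$. Your proposed fix—a local deformation-theoretic argument at a generic $[D_0]\in V_f(\Duct_{1,g-2})$, asserting that "the generic smoothing gives a smooth bielliptic double cover of strictly larger $p$-rank"—is not justified and is precisely the step that needs an argument, not an assertion. There is no a priori reason a one-parameter smoothing of $D_0 = E\cup_{q_1} H\cup_{q_2} E$ must raise the $p$-rank; semicontinuity only gives $f_{D_t}\ge f$ nearby, not $>$. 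The appeal to the Prym map and purity of the $p$-rank strata of $\cA_{g-1}$ gestures at something plausible but is never made into an argument that controls the dimension of a component $W$ whose generic point is smooth.

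The paper resolves this with a global rather than local observation. The key point is that under the Torelli morphism $j:\Mb_g\to\widetilde{\cA}_g$, the locus $V_f(\Duct_{1,g-2})$ collapses by one dimension: the Jacobian of the compact-type curve $E\cup_{q_1}H\cup_{q_2}E$ is $E^2\times\Jac(H)$, independent of the choice of the pair $q_1,q_2\in H$, so $\dim j(V_f(\Duct_{1,g-2})) = g-3+f$. Then for a component $W$ with smooth generic point, one argues (by cutting $j(W)$ with $\dim j(W)-1$ hyperplanes in a projective embedding of $\widetilde{\cA}_g$) that $\dim j(W)\ge \dim j(W\cap\Delta)+2$ would produce a complete curve in $j(W)\smallsetminus j(W\cap\Delta)$, hence a complete curve in $\Gamma\subseteq\B_{E,g}$, contradicting quasi-affineness (Remark~\ref{rem: meg is quasi-affine}). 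This gives $\dim W = \dim j(W) \le \dim j(W\cap\Delta)+1 \le g-2+f$. If instead $W$ consists entirely of singular curves, a dimension comparison forces it to be a component of $V_f(\Duct_{1,g-2})$, of dimension exactly $g-2+f$. You should replace your local argument with this Torelli collapse plus the complete-subvariety argument; without it, the supersingular case is not closed.
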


\begin{proof}
First, observe that $V_f(\overline{\B}_{E, g})$ is non-empty for any $f_E \leq f \leq g - 1 + f_E$. For $g = 2$, this follows from Lemma \ref{lemma:ME2}, and for $g \geq 3$, it follows from Lemmas \ref{lemma:glass_pries} and \ref{lemma:ME2}, which establish that $V_f(\Dr_{2, g - 2}) \subset V_f(\overline{\B}_{E, g})$ is non-empty. 

We follow the strategy of the proof of Theorem \cite[Theorem 2.3]{faber2004complete}. Let $W = W_f$ be a component of the $p$-rank $\leq f$ locus $V_f(\overline{\mathcal{B}}_{E, g})$ for a fixed $f$ with $f_E \leq f \leq g - 1 + f_E$. Let $W_{f + 1}$ be a component of $V_{f+ 1}(\Bb_{E, g})$ such that $W_f \subseteq W_{f + 1}$. This gives us a sequence $$W_f \subseteq W_{f + 1} \subseteq \ldots \subseteq W_{g - 2 + f_E} \subseteq W_{g - 1 + f_E}.$$ Note that $W_{g-1+f_E} = \overline{\B}_{E,g}$ and $\dim W_{g - 1 + f_E} = 2g - 3$ by Lemma~\ref{lem:Bbeq_is_proper_and_irreducible}. By Lemma \ref{Lem: purity we are using}, if $W_{i} \neq W_{i + 1}$, for $f \leq i \leq g - 2 + f_{E}$, then $W_{i}$ has codimension $1$ in~$W_{i + 1}$. This implies that the codimension of $W = W_f$ in~$\overline{\mathcal{B}}_{E,g} = W_{g - 1 + f_E}$ is at most $g - 1 + f_E - f$, or equivalently, that $$\dim W \geq g - 2 + f - f_E.$$

To establish an upper bound on the dimension of $W$, we use induction on $g$ with the base case $g = 2$ covered by Lemma \ref{lemma:ME2}. The inductive hypothesis assumes that for all $2\leq g' < g$ and $1\leq f' \leq f$, $V_{f'}(\overline{\M}_{E, g'})$ is pure of dimension $g' - 2 + f' - f_E$. Since $\B_{E, g}$ cannot contain complete subvarieties (see Lemma \ref{lem:Beq_is_quasi_affine_and_irreducible}) it follows that $W\cap \Delta \neq \varnothing$, where $\Delta = \Delta_0 \cup \Delta_1 \cup \ldots \cup \Delta_{\lfloor g/2 \rfloor}$ is the boundary divisor of $\Mb_g$. 

In the case $f_E = 1$, we find that the underlying curve of the generic point of $W$ is smooth by comparing dimensions. Therefore, $W \cap \Delta_i \neq \varnothing$ for some $0 \leq i \leq \lfloor \frac{g}{2} \rfloor$. In Lemmas \ref{lemma:prankbdry_res1} and \ref{lemma:prankbdry_res2}, we compute the dimensions of the loci of curves in $\Bb_{E, g} - \B_{E, g}$, under the assumption given by the inductive hypothesis, resulting in $\dim (W\cap \Delta_i) \leq g - 4 + f$ for any $0 \leq i \leq \lfloor \frac{g}{2} \rfloor$. Since $\Delta_i$ has codimension $1$ in $\overline{\mathcal{M}}_g$ and $\overline{\mathcal{M}}_g$  is a smooth stack, every component of $W\cap \Delta_i$ has codimension at most $1$ in $W$ (see \cite[page 614]{vistoli}), and we get the following inequality:
\begin{equation}
\dim W \leq \dim (W \cap \Delta_i) + 1 = g - 3 + f.
\label{eqn:dim_bdry}
\end{equation}
Consequently, we conclude that $\dim W \leq g - 3 + f = g - 2 + f - f_E$.

For $g\geq 3$ and $f_E = 0$, the situation requires more careful consideration. The inductive hypothesis gives $\dim V_f(\Duct_{1, g-2})  = g - 2 + f$, making it impossible to apply the previous argument directly. A stronger inequality than \eqref{eqn:dim_bdry} is needed. If $W$ consists entirely of isomorphism classes of singular curves, by comparing dimensions it has to be a component of 
$V_f(\Duct_{1, g-2})$, yielding $$\dim(W) = g-2+f=g-2+f-2f_E.$$
In the remaining case, the generic point of $W$ is represented by a smooth curve. That means that there is a component $\Gamma$ of $V_f(\B_{E, g}) \subseteq \M_g$ such that $W = \overline{\Gamma} \subseteq \Mb_g$. In particular, note that $\dim \Gamma = \dim W \geq g - 2 + f\geq 1$, and that $W \cap \Delta \neq \varnothing$ because of Lemma~\ref{lem:Beq_is_quasi_affine_and_irreducible}.
Moreover, since $\Gamma \subseteq \M_g$, it follows that $\dim \Gamma = \dim j(\Gamma)$, which consequently implies that $\dim W = \dim j(W)$, while $W \cap \Delta \neq \varnothing$ implies that $\dim j(W \cap \Delta)\geq 0$.
Let us prove the following inequality
\begin{equation}
\dim W \leq \dim j(W \cap \Delta) + 1,
\label{eqn:inequality_j_delta}
\end{equation}
which will give us the desired upper bound on $W$; we do this using a similar argument as in \cite[Section~1]{Oort1974}.
Assume that $\dim j(W) \geq \dim j(W\cap \Delta) + 2$. In a projective embedding of~$\Tilde{\mathcal{A}}_g$, we could then intersect $j(W)$ with $(\dim j(W) - 1)$-hyperplanes in general position to get a complete $1$-dimensional subvariety $X$ that does not meet $j(W \cap \Delta)$. This $X$ would be a subvariety of $j(W) - j(W\cap \Delta) = j(\Gamma)$, whose preimage in $\Mb_g$ would thus be a complete $1$-dimensional subvariety of $\Gamma$. This is impossible since~$\B_{E, g}$ and $\Gamma$ cannot contain such subvarieties by Lemma \ref{lem:Beq_is_quasi_affine_and_irreducible}. Therefore, the inequality \eqref{eqn:inequality_j_delta} holds. 
Note that $\dim j(V_f(\Duct_{1, g-2}) ) = g - 3 + f$ since the Torelli morphism "forgets" the pair $q_1, q_2 \in H$ of points in the same orbit of hyperelliptic involution as in \eqref{itm:prob} of the proof of Lemma \ref{lemma:prankbdry_res1}. Combining this with Lemmas \ref{lemma:prankbdry_res1} and \ref{lemma:prankbdry_res2}, we conclude that $\dim j(W \cap \Delta) \leq g - 3 + f$, which implies $\dim W \leq g - 2 + f$. This finishes the proof.
\end{proof}

\begin{remark}
Note that the arguments used in the proof of the preceding theorem for the case~$f_E = 0$ can be similarly applied to the case $f_E = 1$. However, we provide a separate proof for 
$f_E = 1$ to highlight the key differences arising from the nature of the elliptic curve $E$. 
\end{remark}

Below, we explicitly present some conclusions drawn from the proof of the preceding theorem and discuss their limitations.

\begin{corollary}\label{cor:pure of expected dim} Let $g \geq 2$. The following statements hold. 
\begin{enumerate}
    \item If $E$ is an ordinary elliptic curve then for any $1\leq f \leq g$, every generic point of $V_f(\overline{\B}_{E, g})$ is represented by a smooth curve. In particular, for any $g\geq 2$ and $1 \leq f\leq g$, there exists a smooth curve $D$ of genus $g_D = g$ and p-rank $f_C = f$, which is a double cover of $E$. 

    \item  If $E$ is a supersingular elliptic curve,  then for any $0\leq f \leq g - 1$, there exists a smooth curve $D$ of genus $g_D = g$ and p-rank $f_D = f$, which is a double cover of $E$, except in the case when $p = 3$, $g = 2$, and $f = 0$. 

    \item \label{itm:2}
    Let $E$ be a supersingular elliptic curve. Then, for any $0\leq f \leq g - 2$, there is a component of $V_f(\overline{\B}_{E, g})$ that consists entirely of singular curves.
\end{enumerate}
\label{cor:p-rank}
\end{corollary}
\begin{proof}
The third part follows from the proof of Theorem \ref{theorem:p_rank_stratification_MEg}. Moreover, we can explicitly describe these components. Given a supersingular elliptic curve $(E, O_E)$ in characteristic $p>0$, a component of $V_f(\overline{\mathcal{H}}_{g - 2})$, for $g\geq 4$, corresponding to a family of stable hyperelliptic curves $H$, and two distinct points $q, q' \in H$ in the same orbit under the hyperelliptic involution, a component of $V_f(\overline{\mathcal{B}}_{E,g})$ consists of isomorphism classes of stable curves of the form $(E, O_E)\cup_{q = O_E} (H, q, q') \cup_{q' = O_E} (E, O_E)$; note that the dimension of this family equals $\dim V_f(\overline{\mathcal{H}}_{g - 2, 1}) = g - 2 + f = g - 2 + f - f_E$, as desired. For $g = 2$, this follows from the proof of Lemma \ref{lemma:ME2}, while for $g = 3$, it follows similarly as in the proof of Lemma \ref{lemma:prankbdry_res1} using the identification $\overline{\mathcal{H}}_{1}\cong \overline{\mathcal{M}}_{1, 1}$.

To establish the existence of smooth bielliptic curves with a prescribed $p$-rank, we employ a technique similar to the one from the proof of \cite[Theorem~6.4]{pries_current_results}.
Let $p>3$ be a prime number and $E$ be an elliptic curve in characteristic $p$ whose $p$-rank equals $f_E$. By Lemma \ref{lemma:ME2}, for any $f_E\leq f'\leq 1 + f_E$, there is a smooth bielliptic curve~$D$ of genus $2$ with $f_D = f'$. By Lemma \ref{lemma:glass_pries} and by comparing dimensions, there is a smooth hyperelliptic curve $H \to \Po$ of genus $g - 2$ whose $p$-rank equals $f - f'$. Given ramification points $r \in D$ and $r' \in H$ of the structure bielliptic and hyperelliptic morphisms of $D$ and $H$, respectively, consider the curve $(D, r)\cup_{r = r'}(H, r')$. Any irreducible family of such curves has dimension $(2 - 2 + f' - f_E) + (g - 3 + f - f') = g - 3 + f - f_E$, and is therefore contained in a component of $V_f(\overline{\cB}_{E, g})$ that cannot consist entirely of isomorphism classes of singular curves, due to the description provided earlier and Theorem \ref{theorem:p_rank_stratification_MEg}. Consequently, a generic member of this component is represented by a smooth bielliptic curve of genus $g$ with $p$-rank $f$, which is a double cover of $E$. A similar argument applies for $p = 3$ and $f \neq 0$. In the remaining case, $p = 3$, $f_E = 0$, and $f = 0$, by Remark \ref{remark:ekedahl_p=3} there are no smooth curves of genus $2$ that are double covers of $E$. However, by Lemma \ref{p=3, genus 3, p rank 0} below, there is a smooth curve $D'$ of genus $3$ that is a double cover of $E$ and whose $3$-rank equals $0$. To show the existence of a smooth curve of genus $g\geq 4$ with $3$-rank $0$ that is a double cover of $E$, we now use the existence of $D'$ and a smooth (hyper)elliptic curve of genus $g - 3$ with $3$-rank $0$ and a similar argument to the one we just presented for $p>3$.
\end{proof}

\subsection{The $p$-rank stratification of the locus of bielliptic curves $\mathcal{B}_g$}\label{Sec: Bielleptic locus}

Recall that a smooth curve $D$ of genus $g \geq 2$ is bielliptic if there exists an elliptic curve $E$ and a double cover $D \to E$. The locus of bielliptic curves, $\mathcal{B}_g$, satisfies $\mathcal{B}_{E, g} \subset \mathcal{B}_g$ for any elliptic curve $E$, and $\dim \mathcal{B}_g = 2g - 2$.
To describe the closure of $\mathcal{B}_g$ in $\Mb_g$, we proceed similarly to the construction in Section \ref{section:construct_meg_stable}. Namely, one first considers the locus $\overline{\mathcal{B}}_g^{\rm adm}$ of admissible stable bielliptic curves. This is a proper Deligne-Mumford stack in characteristic $p \neq 2$ equipped with morphisms $\overline{\mathcal{B}}_g^{\rm adm} \to \Mb_{g, 2g - 2}$ and $\overline{\mathcal{B}}_g^{\rm adm} \to \Mb_{1, 2g - 2}$ which, respectively, extract the source curve and the target curve.  
By \cite[Theorem 3.7]{schmittvanzelm}, they induce well-defined proper morphisms $$\Psi: \overline{\mathcal{B}}_g^{\rm adm} \to \Mb_{g} \quad \text{ and } \quad \Phi: \overline{\mathcal{B}}_g^{\rm adm} \to \Mb_{1, 1}$$ obtained by forgetting the marked points and stabilizing. The locus of the admissible smooth bielliptic curves $\mathcal{B}_g^{\rm adm}$ is dense in $\overline{\mathcal{B}}_g^{\rm adm}$ and $\Psi: \mathcal{B}_g^{\rm adm} \to \mathcal{B}_g$ is a quasi-finite morphism obtained only by forgetting the admissible structure. It follows that $$\overline{\mathcal{B}}_g = \Psi(\overline{\mathcal{B}}_g^{\rm adm}) \subseteq \Mb_g,$$ the locus of stable bielliptic curves of genus $g$, is the closure of $\mathcal{B}_g$ in $\Mb_g$; see also Remark \ref{rmk: s-vz comparison}.

The description of the boundary of $\B_g$ inside $\Bb_g$ is analogous to that given in Section~\ref{section:boundary_description}. We highlight some details in the following remark.

\begin{remark}
\label{rmk:boundary_of_full_bielliptic}
By \cite[Section~3.4]{schmittvanzelm}, and similarly to our description in Section~\ref{section:boundary_description} and~\eqref{eqn:double_covers_bdry}, the boundary of $\B_g$ in $\Bb_g$ is the closure of the locus consisting of stabilizations of
double covers of
\begin{equation}
(E, O_E)\cup_{O_E = \infty} (\mathbb{P}^1, \infty) \quad \text{or} \quad E_0 = (\mathbb{P}^1, \infty, 0, 1)/\{0 = 1\},
\label{eqn:cor_double_covers_delta_0}
\end{equation}
ramified at some points, where $E$ is a smooth elliptic curve and $E_0$ is the singular elliptic curve, i.e., singular stable $1$-pointed genus-$1$ curve. 
    
    In particular, $\Bb_g \cap \Delta_0 = \Gamma_{ns}' \cup \Gamma_s'$, where:
\begin{itemize}
\item $\Gamma_{ns}'$ is the closure of the locus (of stabilizations) of double covers of $(E, O_E)\cup_{O_E = \infty} (\mathbb{P}^1, \infty)$ as in~\eqref{eqn:cor_double_covers_delta_0}, where $(E, O_E)$ is a smooth elliptic curve, and there is an even number of branch points of this cover on both $E - \{O_E\}$ and $\mathbb{P}^1 - \{\infty\}$. Set-theoretically, note that $\Gamma_{ns}'$ equals the union of $\Bb_{E, g} \cap \Delta_0$ over all isomorphism classes~$[E]$ of smooth elliptic curves~$E$. 

\item $\Gamma_s'$     
is the closure of the locus (of stabilizations) of double covers of $E_0$ as in~\eqref{eqn:cor_double_covers_delta_0}, where $E_0 = (\mathbb{P}^1, \infty, 0, 1)/\{0 = 1\}$ is the singular elliptic curve, branched at $2g - 2$ points in~${\mathbb{P}^1 - \{0, 1\}}$ (one of which is $\infty$) and at the node $0 = 1$. Note that every double cover $D \to E_0$ is of the form $D = (H, 0', 1')/\{0' = 1'\}$, where $H \to \mathbb{P}^1$ is a smooth hyperelliptic curve of genus~$g - 1$, and~$0', 1' \in H$ are ramification points above $0, 1 \in \mathbb{P}^1$. Consequently, $\Gamma_s'$ consists of isomorphism classes of $(H, 0', 1')/\{0' = 1'\}$, where $H$ is a stable hyperelliptic curve of genus~$g - 1$, and $0', 1'$ are two ramification points of the morphism $H \to C$, where $C$ is a rational curve.
\end{itemize}    
\end{remark}

Relying on Theorem~\ref{theorem:p_rank_stratification_MEg} above, we obtain the following result on the $p$-rank stratification of the full bielliptic locus~$\Bb_g$.
    
\begin{corollary}  For every $g \geq 2$ and $0\leq f \leq g$, the $p$-rank $\leq f$ locus of stable bielliptic curves $V_f(\overline{\mathcal{B}}_g)$ is pure of dimension $g - 2 + f$. 
\label{cor:biell_prank}
\end{corollary}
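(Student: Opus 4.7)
The plan is to reduce the statement to Theorem \ref{theorem:p_rank_stratification_MEg} by using the morphism $\Phi: \overline{\mathcal{B}}_g^{\mathrm{adm}} \to \overline{\mathcal{M}}_{1,1}$ that records the stabilized target elliptic curve, combined with the purity machinery already used in the fiberwise case. First I would note that $\overline{\mathcal{B}}_g$ is irreducible of dimension $2g - 2$: since $\overline{\mathcal{B}}_g^{\mathrm{adm}}$ fibers over the irreducible base $\overline{\mathcal{M}}_{1,1}$ with geometric fibers isomorphic to the irreducible stacks $\overline{M}_{E,g}$ of constant dimension $2g - 2$ (Theorem \ref{theorem:meg_properties_summary}), $\overline{\mathcal{B}}_g^{\mathrm{adm}}$ is irreducible, and hence so is its image $\overline{\mathcal{B}}_g = \Psi(\overline{\mathcal{B}}_g^{\mathrm{adm}})$. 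Corollary \ref{cor:p-rank}(1) produces a smooth bielliptic curve of $p$-rank $g$ covering a generic ordinary elliptic curve, showing that the generic $p$-rank on $\overline{\mathcal{B}}_g$ equals $g$.

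For the lower bound, I would follow the chain argument used in the proof of Theorem \ref{theorem:p_rank_stratification_MEg}. Non-emptiness of $V_f(\overline{\mathcal{B}}_g)$ for every $0 \leq f \leq g$ is supplied by Corollary \ref{cor:p-rank}, with the single edge case $(p,g,f) = (3,2,0)$ handled by the explicit boundary curve $D_2 = (E, O_E)\cup_{O_E = O_E}(E, O_E)$ of Example \ref{example:bdry_of_ME2_in_M2} for a supersingular $E$, which has $p$-rank $0$. Given a component $W$ of $V_f(\overline{\mathcal{B}}_g)$, form a chain $W = W_f \subseteq W_{f+1} \subseteq \cdots \subseteq W_g = \overline{\mathcal{B}}_g$ by selecting at each step a component $W_{i+1}$ of $V_{i+1}(\overline{\mathcal{B}}_g)$ containing $W_i$ (the top being forced by irreducibility). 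Iterating Lemma \ref{Lem: purity we are using}, which drops dimension by at most $1$ at each descent, yields $\dim W \geq (2g - 2) - (g - f) = g - 2 + f$.

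For the upper bound, the case $f = g$ is trivial since $V_g(\overline{\mathcal{B}}_g) = \overline{\mathcal{B}}_g$. Assume $0 \leq f \leq g - 1$, let $W$ be a component of $V_f(\overline{\mathcal{B}}_g)$, and choose a component $\widetilde W$ of $\Psi^{-1}(W) \subseteq V_f(\overline{\mathcal{B}}_g^{\mathrm{adm}})$ surjecting onto $W$; properness and generic quasi-finiteness of $\Psi$ on the smooth bielliptic locus gives $\dim \widetilde W = \dim W$. Since $\overline{\mathcal{M}}_{1,1}$ is irreducible of dimension $1$, the image $\Phi(\widetilde W)$ is either a closed point $[E]$ or all of $\overline{\mathcal{M}}_{1,1}$. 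In the first case, $\widetilde W \subseteq V_f(\overline{M}_{E,g})$ and Theorem \ref{theorem:p_rank_stratification_MEg} gives $\dim \widetilde W \leq g - 2 + f - f_E \leq g - 2 + f$. In the second case, a generic fiber of $\Phi|_{\widetilde W}$ lies over an ordinary $E$ (so $f_E = 1$) and is contained in $V_f(\overline{M}_{E,g})$, which has dimension $g - 3 + f$ by Theorem \ref{theorem:p_rank_stratification_MEg}; hence $\dim \widetilde W \leq (g - 3 + f) + 1 = g - 2 + f$.

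The principal technical obstacle is confirming that the geometric fibers of $\Phi$ are indeed the stacks $\overline{M}_{E,g}$ to which Theorem \ref{theorem:p_rank_stratification_MEg} applies, and that the generic quasi-finiteness of $\Psi$, established on the smooth open locus, remains valid on the relevant $p$-rank strata so that $\dim \widetilde W = \dim W$ is not lost along the way. Once these compatibilities are in place, the matching lower and upper bounds force every component of $V_f(\overline{\mathcal{B}}_g)$ to have dimension exactly $g - 2 + f$, yielding the claimed purity.
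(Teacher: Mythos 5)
Your proof takes a genuinely different, and structurally simpler, route than the paper's. The paper first splits on whether the generic point of a component $W$ of $V_f(\overline{\mathcal{B}}_g)$ corresponds to a smooth curve, and in the "generically smooth, $\Phi$ dominant" subcase it proceeds through the intersection $W \cap \Delta_0$: it produces a point of $W$ over the nodal cubic, concludes $W\cap\Delta_0 \neq \varnothing$, and bounds $\dim(W\cap\Delta_0)$ by $g-3+f$ using the boundary computations and another induction. The "generically singular" case is handled by a separate classification. You bypass all of this with a uniform fiber-dimension argument for $\Phi|_{\widetilde W}$; in particular, no smooth/singular split is needed, because the inequality $\dim W \leq \dim \widetilde W$ holds for any surjection $\widetilde W \to W$ and suffices for the upper bound — so the equality $\dim\widetilde W = \dim W$ that you flag as a potential obstacle is actually not needed. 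The lower-bound chain is the same as the paper's (and both, strictly speaking, presuppose irreducibility of $\overline{\mathcal{B}}_g$).

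There is, however, a real gap in how you cite Theorem \ref{theorem:p_rank_stratification_MEg}. The theorem computes $\dim V_f(\overline{\mathcal{B}}_{E,g}) = g-2+f-f_E$, where $\overline{\mathcal{B}}_{E,g} \subseteq \overline{\mathcal{M}}_g$ is the image of $\overline{M}_{E,g}^{\mathrm{red}}$ and has dimension $2g-3$. The stack $\overline{M}_{E,g}$ has dimension $2g-2$, its $p$-rank-$\leq f$ locus has dimension $g-1+f-f_E$ (one higher, by the translation action), and neither it nor $\overline{\mathcal{B}}_{E,g}$ is the fiber of $\Phi$, which also has dimension $2g-3$. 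The numbers you quote ($g-2+f-f_E$ in Case 1 and $g-3+f$ in Case 2) are the right ones, but they should be obtained by passing to $W = \Psi(\widetilde W) \subseteq V_f(\overline{\mathcal{B}}_{E,g})$ in Case 1, and in Case 2 by showing that the $p$-rank-$\leq f$ locus of $\Phi^{-1}(E,P)$ maps generically quasi-finitely to $V_f(\overline{\mathcal{B}}_{E,g})$ — an analog of Proposition \ref{forgetful map is quasi-finite} for the fiber of $\Phi$ that does need to be supplied. Two further points to tighten: (i) your dichotomy omits the possibility that $\Phi(\widetilde W)$ is the boundary point of $\overline{\mathcal{M}}_{1,1}$, which can be ruled out since the $p$-rank-$\leq f$ locus of covers of the nodal cubic has dimension at most $g-3+f$, contradicting $\dim\widetilde W \geq g-2+f$; (ii) the irreducibility argument for $\overline{\mathcal{B}}_g^{\mathrm{adm}}$ is not sound as stated, both because the fibers of $\Phi$ are not $\overline{M}_{E,g}$ and because "irreducible fibers over an irreducible base" does not by itself imply an irreducible total space; a cleaner route is to note that $\mathcal{B}_g^{\mathrm{adm}}$ is dense in $\overline{\mathcal{B}}_g^{\mathrm{adm}}$ and is itself irreducible.
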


\begin{proof} Let $W$ be a component of $V_f(\overline{\mathcal{B}}_g)$. As before, we find that $\dim W \geq g - 2 + f$, using the purity of the $p$-rank stratification of $\Mb_g$. 
Consider the morphism $\Phi: \overline{\mathcal{B}}_g^{\rm adm} \to {\Mb}_{1, 2g - 2} \to {\Mb}_{1, 1}$ induced by $(D \to (E', P_1, \ldots, P_{2g - 2})) \mapsto ({E}, P_1)$, where $E = ({E}, P_1)$ denotes the elliptic curve obtained by stabilizing $(E', P_1)$. Let $W'$ denote the preimage of $W$ under the map $\Psi: \overline{\mathcal{B}}_g^{\rm adm} \to \Mb_g$. We separate the proof into different cases depending on whether the generic point of $W$ is represented by a smooth curve.
\begin{enumerate}
    \item Suppose that the generic point of $W$ is represented by a smooth curve; note that $\dim W = \dim W'$. There are two possibilities:
    \begin{enumerate}
    \item $\Phi(W')$ is a single point in $\M_{1,1} \subseteq \Mb_{1,1}$ corresponding to a smooth elliptic curve~$E$. In this case, $W$ is an irreducible locus consisting of isomorphism classes of smooth curves~$D$ with $f_D \leq f$, each admitting a double cover $D \to E$.
In other words, $W \subseteq V_f(\overline{\cB}_{E, g})$. Therefore, $$\dim W \leq \dim V_f(\overline{\cB}_{E, g}) = g - 2 + f - f_E \leq g - 2 + f$$ by Theorem~\ref{theorem:p_rank_stratification_MEg}. 

    \item Otherwise, $\Phi(W')$ consists of at least two points in $\Mb_{1,1}$. Moreover, $\Phi(W') \subseteq \Mb_{1,1}$ is irreducible, being the image of an irreducible locus under a morphism. These two properties imply that $\Phi(W') = \Mb_{1,1}$. In particular, there exists an element of~$W'$ that maps to the singular elliptic curve $E_0 = (\mathbb{P}^1, \infty, 0, 1)/\{0 = 1\}$ under~$\Phi$, and hence whose image under~$\Psi$ lies in~$W \cap \Delta_0$. Therefore, $W \cap \Delta_0 \neq \varnothing$.
Since $\Mb_g$ is a smooth stack, we have $\dim W \leq \dim (W \cap \Delta_0) + 1$. Thus, to conclude that $\dim W \leq g - 2 + f$, it suffices to show that $\dim (W \cap \Delta_0) \leq g - 3 + f$.

   Let $\Gamma_{ns} = \Gamma_{ns}' \cap W$ and $\Gamma_s = \Gamma_s' \cap W$, where $\Gamma_{ns}'$ and~$\Gamma_s'$ are the loci described in Remark~\ref{rmk:boundary_of_full_bielliptic}.
It suffices to show that $\dim \Gamma_{ns} \leq g - 3 + f$ and $\dim \Gamma_s \leq g - 3 + f$.
Recall that $W \subseteq V_f(\Bb_g)$.
    \begin{itemize}
        \item To show $\dim \Gamma_{ns} \leq g-3+f$, note that $\Gamma_{ns}$ is the union of the loci $V_f(\Bb_{E, g}) \cap \Delta_0$ over all isomorphism classes $[E]$ of (smooth) elliptic curves $E$.
        By Theorem~\ref{theorem:p_rank_stratification_MEg} and Lemmas~\ref{lemma:prankbdry_res1} and~\ref{lemma:prankbdry_res2}, no component of $V_f(\Bb_{E, g})$ is contained in $\Delta_0$, and
        $\dim (V_f(\Bb_{E, g}) \cap \Delta_0) \leq g - 3 + f - f_E$.
        Moreover, the locus of curves in $\M_{1,1}$ with $p$-rank $f_E$ has dimension $f_E$ for $0 \leq f_E \leq 1$.
        Hence, we conclude that
        $$\dim \Gamma_{ns} \leq (g - 3 + f - f_E) + f_E = g - 3 + f.$$

        \item  To show $\dim \Gamma_{s} \leq g-3+f$, let $V_{f - 1}(\overline{\mathcal{H}}_{g - 1, 2})$ be the locus of stable $2$-pointed hyperelliptic curves $(H, q_1, q_2)$ of genus $g - 1$ whose $p$-rank is $\leq f - 1$, and let $\Gamma'' \subseteq V_{f - 1}(\overline{\mathcal{H}}_{g - 1, 2})$ be the sublocus where the marked points $q_1, q_2$ of $(H, q_1, q_2)$ are chosen to be two ramification points of the double cover $H \to C$, with $C$ a rational curve. Since there are only finitely many ramification points, we have $\dim \Gamma'' = \dim V_{f - 1}(\overline{\mathcal{H}}_{g - 1})$. Using the definition of $\Gamma_s'$ and the inclusion $W \subseteq V_f(\overline{\mathcal{B}}_{g})$, we deduce that $\Gamma_s = \Gamma_s' \cap W$ is contained in the image of $\Gamma''$ under the clutching morphism $\overline{\mathcal{M}}_{g - 1, 2} \to \overline{\mathcal{M}}_{g}$. Lemma~\ref{lemma:glass_pries} then implies that $$\dim \Gamma_s = \dim V_f(\overline{\mathcal{H}}_{g - 1}) \leq g - 3 + f.$$  
    \end{itemize}

    \end{enumerate}
    \item  If $W$ is not generically represented by a smooth curve, then $W$ consists entirely of isomorphism classes of singular curves.  
    If the image of $W'$ under $\Phi$ contains the isomorphism class of $E_0 = (\mathbb{P}^1, \infty, 0, 1)/\{0 = 1\}$, the same discussion as above implies that $W \cap \Delta_0 \neq \varnothing$ and $\dim W \leq g - 2 + f$; this covers the cases $\Phi(W') = \overline{\mathcal{M}}_{1,1}$ or $\Phi(W') = [E_0]$.
    Otherwise, $\Phi(W') = [E]$, where $E = (E, O_E)$ is a smooth elliptic curve. As before, Theorem~\ref{theorem:p_rank_stratification_MEg} implies that $\dim W \leq \dim V_f(\overline{\mathcal{B}}_{E, g}) = g - 2 + f - f_E \leq g - 2 + f$. (More precisely, $\dim W \geq g - 2 + f$, and Lemmas \ref{lemma:prankbdry_res1} and \ref{lemma:prankbdry_res2} imply that $f_E = 0$ in this case and that $W$ could only possibly be the closure of the locus consisting of isomorphism classes of double covers of curves of the form $(E, O_E) \cup_{O_E = \infty} (\mathbb{P}^1, \infty)$, ramified above $2g - 2$ distinct points $P_i \in \mathbb{P}^1 - \{\infty\}$, $1 \leq i \leq 2g - 2$, and unramified otherwise, as in Corollary \ref{cor:p-rank}.)

\end{enumerate} 
In both cases, we conclude that $\dim W \leq g - 2 + f$, which implies $\dim W = g - 2 + f$, as desired. Consequently, $V_f(\Bb_g)$ is pure of dimension $g - 2 + f$.
\end{proof}

Theorem \ref{thm:mainintro} and Corollary \ref{cor:p-rankintro} follow from Theorem \ref{theorem:p_rank_stratification_MEg}, and Corollaries \ref{cor:p-rank} and~\ref{cor:biell_prank}.

\section{Supersingular double covers of $E$}\label{sec:applications}
In this section, we look more carefully into the supersingular locus of $\B_{E,g}$ and offer some observations as a consequence of the results we establish in Section \ref{Sec: p rank strat MEg}.
Let $k$ be an algebraically closed field of characteristic $p>2$, as above. Recall that a smooth curve $D$ of genus $g$ over $k$ is called supersingular if its Jacobian $\Jac(D)$ is isogenous to a product $E^g$, where~$E$ is one of the finitely many  (up to isomorphism) supersingular elliptic curves over $k$. Equivalently, its Newton polygon, the combinatorial way to encode the isogeny classes of $\Jac(D)[p^{\infty}]$, is of the form $\left ( {\frac{1}{2}, \frac{1}{2}, \ldots, \frac{1}{2}} \right )$. In particular, if $D$ is supersingular, it has $p$-rank $0$. We denote by $\mathcal{M}_g^{\rm ss} \subseteq V_0(\mathcal{M}_g)$ the locus of all supersingular smooth curves of genus $g$ in characteristic $p$. 
\\ 

We show some applications of Theorem \ref{theorem:p_rank_stratification_MEg} in low genera $g$, for any $p>2$:
\begin{enumerate}
    \item[($g = 2$)] \textit{There are only finitely many supersingular curves of genus $2$ that are double covers of an elliptic curve (necessarily a supersingular one)}. This follows from the fact that $V_0(\B_{E, 2})$ is a $0$-dimensional locus and the fact that for $g = 2$, being supersingular is equivalent to having $p$-rank $0$. See Remark~\ref{remark:superspecial_genus2} for more details. 
    
    \item[($g = 3$)] \textit{The locus of supersingular curves of genus $3$ that are double covers of an elliptic curve is $1$-dimensional}. Indeed, we have shown that $V_0(\B_{E, 3})$ is a $1$-dimensional locus consisting entirely of supersingular genus 3 curves. Specifically, the classification of eligible Newton polygons of height $2g = 6$ indicates that any genus-$3$ curve with $p$-rank $0$ and a map to an elliptic curve is automatically supersingular. See also \cite[Theorem 1.12, Remark 3.3]{oort_hess}.
    
    \item[($g = 4$)] \textit{The generic point of any component of the supersingular locus $\mathcal{M}_4^{\rm ss}$ of $\mathcal{M}_4$ does not correspond to a smooth curve that is a double cover of an elliptic curve}. Kudo-Harashita-Senda in \cite{Kudo2020} or Pries in \cite{Pries2023} show the existence of a smooth curve of genus $4$ which is supersingular in any characteristic $p>0$, so that $\mathcal{M}_4^{\rm ss}$ is non-empty; any of its components is either $3$-dimensional or $4$-dimensional. On the other hand, we have shown that for any supersingular elliptic curve~$E$, $V_0(\B_{E, 4})$ is pure of dimension $2$, which implies the result.
    
    Note that there are only $2$ possible Newton polygons for a curve $D$ of genus $4$ with $p$-rank~$0$ which is a double cover of an elliptic curve. Namely, the Newton polygon of $D$ is either  $\left(\frac{1}{3}, \frac{1}{3}, \frac{1}{3}, \frac{1}{2}, \frac{1}{2}, \frac{2}{3}, \frac{2}{3}, \frac{2}{3}\right)$ or the supersingular one $\left(\frac{1}{2}, \frac{1}{2}, \frac{1}{2}, \frac{1}{2}, \frac{1}{2}, \frac{1}{2}, \frac{1}{2}, \frac{1}{2}\right)$.

    Moreover, for $p>3$, the supersingular curve that  Kudo-Harashita-Senda found is a Howe curve $H$, i.e., a normalization of the fiber product of two elliptic curves $E_i, i = 1, 2$, such that the degree $2$ maps $E_i \to \mathbb{P}^1$ share exactly one branch point. In particular, $H$ is a double cover of $E_1$ and $E_2$. If we take now $E = E_i$ for any $i = 1, 2$, this shows that the locus $\B_{E, 4}^{\rm ss}$ of supersingular curves in $\B_{E, 4}$ has dimension $$1\leq \dim \B_{E, 4}^{\rm ss}\leq 2,$$ because of the purity theorem \cite[Theorem 4.1]{de2000purity}. %{\color{teal} I wonder if we can show $\dim \B_{E, 4}^{\rm ss} = 1$, i.e. show that $V_0(\B_{E,4}) \neq \B_{E,4}^{\rm ss}$. I thought we could use the clutching map $\kappa_{g_1,g_2}^r: \mt_{E,1} \times \hti_3 \to \mt_{E,4}$ and the fact that there exists a hyperelliptic curve with Newton polygon $\left (\frac{1}{3}, \frac{1}{3}, \frac{1}{3}, \frac{2}{3}, \frac{2}{3},\frac{2}{3} \right)$ to show that $\overline{\B}_{E,4}^{\rm ss} \neq V_0(\overline{\B}_{E,g})$, but it seems that this lands you in the component of $\overline{\B}_{E,g}$ consisting only of singular curves.}

\end{enumerate}

\textbf{Question.} Given $p>2$ and $g\geq 5$, is there a smooth supersingular curve of genus $g$ which is a double cover of an elliptic curve in characteristic $p$?

\begin{example}\label{ss double cover for g eq five}
    When $g=5$, we consider the fiber product of an elliptic curve and a hyperelliptic curve of $g=2$. We fix $p \equiv 11 \pmod{12}$.

    Let
    \[E': y^2=(x^2+1)(x^2-1)\]
    \[D': z^2=(x^4+x^2+1)(x^2-1)\]
    By Lemma \ref{lem: p rank 0 construction}, we see that when $p \equiv 11 \pmod{12}, E', D'$ both have $p$-rank $0$. Let $D''$ be the normalization of the fiber product of $E', D'$ over $\Po$. Then the $p$-rank of $D''$ is the $p$-rank of $w^2=(x^2+1)(x^4+x^2+1)$. Following the strategy of \cite{Kudo2020}, we parametrize curves isomorphic to $E',D'$ and solve for the parameters that give fiber product $p$-rank zero.

    Let
    \[E_{u}: y_1^2=((x+u)^2+(ux+1)^2)((x+u)^2-(ux+1)^2)\]
    \[D_v: z_1^2=((x+v)^4+(x+v)^2+1)((x+v)^2-(vx+1)^2)\]
    \[D_{u,v}: w_1^2=(1-u^2)(1-v^2)((x+u)^2+(ux+1)^2)((x+v)^4+(x+v)^2+1)\]
    Note that $E_u \cong E'$, $x=\frac{x_1+u}{ux_1+1}, y=\frac{y_1}{(ux_1+1)^2}$. Observe also $D_v \cong D'$, with $x=\frac{x_1+v}{vx_1+1}, y=\frac{y_1}{(vx_1+1)^3}$. Then let $f(x)=(x+u)^2+(ux+1)^2)((x+v)^4+(x+v)^2+1)$. The Hasse-Witt matrix of $D_{u,v}$ is a~$2 \times 2$ matrix~$M$ with $M_{i,j}=c_{pj-i}$, where $c_{pj-i}$ is the coefficient of $x^{pj-i}$ in $f(x)^{\frac{p-1}{2}}$. Then, $D_{u,v}$ is supersingular if and only if $M M^{(p)}=0$. By \cite[proposition 2.2]{Kudo2020}, this  is equivalent to 
    \begin{equation}\label{eq: p rank zero eq}
       ad - bc = 0, \hspace{5mm} ab^{p-1} + d^{p} = 0, \hspace{5mm} a^p + c^{p-1}d = 0 
    \end{equation}
    where
    \[a = c_{p-1}, \hspace{5mm} b = c_{2p-1}, \hspace{5mm} c = c_{p-2}, \hspace{5mm} d = c_{2p-2}\]
    Since $a,b,c,d$ are polynomials in terms of $u,v$, we plug in $u,v $ into the expression of $a,b,c,d$. Then, the three equations in \eqref{eq: p rank zero eq} become $h_i(u,v)$ for $1 \leq i \leq 3$. 
    
    Therefore, the $(u,v)$ such that $E_u \times_{\Po} D_v$ is supersingular are the solutions of $h_i(u,v)=0$ for $1 \leq i \leq 3$. Also we need to exclude $u=\pm 1, v=\pm 1$ and the $(u,v)$ such that $((x+u)^2+(ux+1)^2)$ and $((x+v)^4+(x+v)^2+1)$ have a common root. We can find a root for $(u,v)$ in $\mathbb{F}_p$ for $p<1000$, except $p=107, 443, 491$. So if we assume $p \neq 107, 443, 491, p \leq 1000, p \equiv 11\pmod{12}$, we can construct $D=E_u \times_{\Po} D_v$ such that $f_D=0$. By the above analysis, this implies that $D$ is a double cover of~$E_u$ that is supersingular.  
\end{example}

We conclude with the following observations.
\begin{enumerate}
    \item[($g = 5$)] We have shown that $V_0(\B_{E, 5})$ is pure of dimension $3$. If the supersingular locus of $\M_5$ is non-empty, the expected dimension $d_e$ of any of its components $W$ is $d_e = 3$, but a priori, the dimension of $W$ could be any number $d$ such that $3\leq d \leq 6 = \dim (\mathcal{S}_5)$, where $\mathcal{S}_5 \subseteq \mathcal{A}_5$ is the locus of supersingular principally polarized abelian varieties.
    
    Similarly to the cases $(g = 3)$ and $(g = 4)$, we can see that there are only $3$ possible Newton polygons for a curve $D$ of genus $5$ with $p$-rank $0$ which is a double cover of an elliptic curve. Namely, the Newton polygon of $D$ is one of the following: $ (\frac{1}{4}, \frac{1}{4}, \frac{1}{4}, \frac{1}{4}, \frac{1}{2}, \frac{1}{2}, \frac{3}{4}, \frac{3}{4}, \frac{3}{4}, \frac{3}{4})$, $(\frac{1}{3}, \frac{1}{3}, \frac{1}{3}, \frac{1}{2}, \frac{1}{2}, \frac{1}{2}, \frac{1}{2}, \frac{2}{3}, \frac{2}{3}, \frac{1}{3})$, or
  $(\frac{1}{2}, \frac{1}{2}, \frac{1}{2}, \frac{1}{2}, \frac{1}{2}, \frac{1}{2}, \frac{1}{2}, \frac{1}{2}, \frac{1}{2}, \frac{1}{2})$, the supersingular one.

    Therefore, for any $E$ such that there is a supersingular double cover $D \to E$ of genus $5$, it follows that $$1\leq \dim \B_{E, 5}^{\rm ss} \leq 3.$$ In particular, for $E$ as in Example \ref{ss double cover for g eq five}, we find that there is at least a $1$-dimensional family of smooth curves $D$, which are double covers of $E$.
\end{enumerate}

\textbf{Question.} Given $p>2$, is there a ($3$-dimensional) component of $V_0(\B_{E, 5})$ that entirely consists of supersingular curves in characteristic $p$? If so, is this a component of $\M_5^{\rm ss}$, the supersingular locus of $\M_5$? Such a component of $\M_5^{\rm ss}$ would have a non-trivial generic automorphism group. It is currently not known whether there exists any $(p,g)$ with $g \geq 5$ such that $\mathcal{M}_g^{\rm ss}$ has a component whose generic point has a non-trivial automorphism group in characteristic $p$.

\section{Explicit families of smooth covers of small $p$-rank} \label{sec: explicit}

In Theorem~\ref{theorem:p_rank_stratification_MEg}, we showed that $V_f(\Bb_{E,g})$ is pure of dimension $g - 2 + f - f_E$. In this section, we explicitly construct curves and one-parameter families $\D_t$ inside $V_f(\B_{E,g})$ for $f$ around $\frac{g}{2}$. These curves and families are obtained as the normalization of fiber products of the fixed elliptic curve~$E$ and certain hyperelliptic curves $H$. By imposing conditions on the Hasse-Witt matrix of $H$, we obtain double covers of $E$ whose $p$-rank is much smaller than the generic $p$-rank.

Using this technique, we first construct a genus-$g$ double cover of a supersingular elliptic curve~$E$ in characteristic $p = 3$ for $g = 3$. (Recall that this construction provides the base case for Corollary~\ref{cor:pure of expected dim} when $p = 3$.)

\begin{proposition}[$p = 3$]\label{p=3, genus 3, p rank 0}
Let $E$ an elliptic curve whose $3$-rank equals $0$, i.e., with $f_E = 0$. Then, there exists a smooth curve $D$ of genus $3$ which is a double cover of $E$ and has $f_D = 0$.
\end{proposition}

For more general values of $p$ and $g$, we use this technique to construct one-parameter families $\D_t \subseteq \B_{E,g}$, where each curve $D \in \D_t$ is a smooth genus-$g$ double cover of $E$ with $p$-rank $f_D$ much smaller than $g$. In particular, we construct the following in Proposition~\ref{prop: general $g$}:

\begin{enumerate}
    \item Let $p>2$ and $g>3$, and suppose $g-1 \nmid p-1$ and $p\nmid g-1$. Then, we construct a one parameter family $\mathcal{D}_t$ such that each curve $D \in \D_t$ is a genus-$g$ double cover of $E$ with $f_D \leq g-1$.
    \item With the same assumption as above, if we assume furthermore that $g-1 \mid p+1$, then we construct $\D_t$ such that each $D \in \D_t$ is a genus-$g$ double cover of $E$ with $f_D \leq \lceil\frac{g}{2}\rceil+f_E$. 
\end{enumerate}

We can strengthen the upper bound by imposing additional constraints on the congruence class of $p$. In particular, in Propositions~\ref{prop: odd n} and~\ref{prop: even n}, we construct the following cases:
\begin{enumerate}
    \item Let $p>2$, $g>3$ and $g$ being even. Suppose $g+2 \mid p+1$ and $p \nmid g+2$. Then, we can construct the family $\D_t$ such that $D \in \D_t$ is a genus-$g$ double cover of $E$ with $f_D \leq \lceil\frac{g}{2}\rceil+f_E-1$.
    \item Let $p>2$, $g>3$ and $g$ being odd. Suppose $g+1 \mid p+1$ and $p \nmid g+1$. Then, we can construct the family $\D_t$ such that $D \in \D_t$ is a genus-$g$ double cover of $E$ with $f_D \leq \lceil\frac{g}{2}\rceil+f_E-1$.
\end{enumerate}
\subsection{Decomposition of the Jacobian}
Here we describe the construction based on fiber products. Let $k$ be an algebraically closed field of characteristic $p > 2$, and let $E: y^2 = x(x-1)(x-\lambda)$, for some $\lambda \in k$, be an elliptic curve over $k$ in Legendre form.

Recall that the $p$-rank of a curve $D$ of genus $g$ over $k$ is the semisimple rank of the Frobenius morphism on $H^1(D, \OO_D)$; that is, the rank over $k$ of the $g^{\text{th}}$ semilinear iteration of the Hasse-Witt matrix associated to $D$. Since the entries of the Hasse-Witt matrix are polynomials in the coefficients of $D$, we aim to construct $D \to E$ with a prescribed branching locus. One way to achieve this is to project the branching locus onto $\mathbb{P}^1$, find a hyperelliptic curve $H$ branched at the projection, and take the fiber product of $E$ and $H$.

We take the branching locus of the cover $D \to E$ to consist of $\Z/2\Z$-conjugates, i.e., its branching points should be of the form $$(x_1,y_1), (x_1,-y_1), \cdots, (x_{g-1},y_{g-1}), (x_{g-1},-y_{g-1})$$ 
Where $x_1, \dots, x_k \in \Po(k)$ and $x_k \not \in \{0,1,\lambda\}$. After fixing the branching points, we consider the fiber product $D' = H \times_{\Po} E$ where 
\begin{equation}\label{Eq: C' equation}
\begin{split}
& E: y^2=x(x-1)(x-\lambda), \\
& H: z^2=\prod_{k=1}^{g-1}(x-x_k). \\
\end{split}
\end{equation}

We then take $D$ to be the normalization of $D'$. The resulting map $D \to E$ is a degree~$2$ cover, branched along the locus $\{(x_1,y_1), (x_1,-y_1), \cdots, (x_{g-1},y_{g-1}), (x_{g-1},-y_{g-1})\}$. 

We now aim to understand the Hasse-Witt matrix of $D$. To do this, we decompose the Jacobian variety $\Jac(D)$, up to isogeny, into simpler components that can be analyzed individually. We make the following observation.
\\
\textbf{Observation}\label{observation: decomposition}. %\label{lem: quotient of C}
    If $D$ is as above, then:
    \begin{itemize}
        \item The automorphism group of $D$ contains $\Z/2\Z \times \Z/2\Z$. It is  generated by the order two elements $\iota_1: (x,y,z) \to (x,y,-z)$ and $\iota_2: (x,y,z) \to (x,-y,z)$. 
        \item Let $\iota_3=\iota_1 \circ \iota_2$. Consider the order two subgroup groups of $\Aut(D)$ given by $G_1=\{id,\iota_1\}$, $G_2=\{id,\iota_2\}$ and $G_3=\{id,\iota_3\}$. Then we get the following three curves as quotients of $D$:
\begin{enumerate}
\item $E=D/G_1: y^2=x(x-1)(x-\lambda)$.
This is the elliptic curve $E$;
\item $H=D/G_2: z^2=\prod_{k=1}^{g-1}(x-x_k)$. This is the hyperelliptic curve $H$ in the construction, and its genus is  $g_{H}=\lceil\frac{g-1}{2}\rceil -1$;
\item $H'=D/G_3: w^2 = x(x-1)(x-\lambda)\prod_{k=1}^{g-1}(x-x_k)$. This is a new hyperelliptic curve of genus $g_{H'}=\lceil \frac{g}{2}\rceil$. Here $w=yz$.
\end{enumerate}
\end{itemize}
Then, the Jacobian of $D$ decomposes, up to isogeny, as 
\begin{equation}\label{eq: decomposition of Jacobian}
    \Jac(D) \sim \Jac(E) \oplus \Jac(H) \oplus \Jac(H')
\end{equation}
If we let $\Jac(D)_{\textup{new}} \subset \Jac(D)$ denote the Prym variety of the cover $D \to E$, where $\Jac(D)_{\textup{new}}$ consists of divisors that are negated by the non-trivial element of $\Aut(D/E)$, then we also have $\Jac(D)_{\textup{new}} \sim \Jac(H) \oplus \Jac(H')$. 

%Let $\Jac(D)_{\textup{new}} \subset \Jac(D)$ denote the Prym variety of the cover $D \to E$, such that 
%\begin{equation} \label{eq:Prym}
    %\Jac(D) \sim \Jac(E) \oplus \Jac(D)_{\textup{new}}.
%\end{equation} Equivalently, $\Jac(D)_{\textup{new}}$ consists of divisors that are negated by the non-trivial element of $\Aut(D/E)$.

\begin{proof}
    These observations follow immediately from the construction in~\eqref{Eq: C' equation}, combined with \cite[Theorem C]{KaniRosen1989} and the fact that $\Jac(D) \sim \Jac(D)_{\textup{new}} \oplus \Jac(E)$.
\end{proof}

We can now use the fiber product to construct double covers of $E$ of genus $g = 3$ with $f_D = 0$ in characteristic $p = 3$. By Remark~\ref{remark:ekedahl_p=3}, there are no smooth curves of genus $g = 2$ that are double covers of such an $E$.

\begin{proof}[Proof of Proposition \ref{p=3, genus 3, p rank 0}]
We use the fact that in characteristic $3$, $y^2=x^4+ax^3+bx^2+cx+d$ is a supersingular elliptic curve if and only if $b=0$ and $x^4+ax^3+cx+d=0$ has $4$ distinct roots. This is because we need $4$ distinct roots to ensure that the curve is smooth and then the Hasse-invariant is $b$.

We write $i\in \mathbb{F}_9$ for any of the roots of $x^2=-1$ and consider
\[E:y^2=x(x-i-2)(x-2i-2)(x-1)=x^4+x^3+x.\]
Note that over an algebraically closed field of characteristic $3$, there is up to isomorphism a unique supersingular elliptic curve, so, without loss of generality, we can assume $E$ is given by the above equation.
Let $D$ be the normalization of $H \times_{\Po} E$, where 
\begin{align*}
H: z^2&=x(x-i-2)(x-2i)(x-i-1)=x^4+2ix^3+ix.
\end{align*}
From \ref{eq: decomposition of Jacobian}, we see that $\Jac(D) \sim \Jac(E) \oplus \Jac(H) \oplus \Jac(H')$, where
\begin{align*}
&H: &z^2=x(x-i-2)(x-2i)(x-i-1)&=x^4+2ix^3+ix,\\
&H': &w^2=(x-2i-2)(x-1)(x-2i)(x-i-1)&=x^4+(i+2)x^3+(2i+2)x+1.
\end{align*}
We see that $E$, $H$ and $H'$ are supersingular and $g_{D}=3$. Thus $D$ is a genus-$3$ double cover of $E$ and the $3$-rank of $D$ is $0$.
\end{proof}

\subsection{Explicit one-parameter families in $V_f(\B_{E,g})$ with $f \approx \frac{g}{2}$}

As we see from \eqref{eq: decomposition of Jacobian}, to construct $D$ with smaller $p$-rank, it suffices to find hyperelliptic curves $H$ with small $f_H$.
The authors of \cite{devalapurkar2017dieudonn} considered curves of the form $z^2 = x^n + 1$ under the condition that both $n$ and $(n-1)/2$ are prime. Furthermore, assuming $p \not\equiv 0,1 \pmod{n}$, they showed that such curves have $p$-rank zero. We consider curves of a similar form, without requiring $n$ or $(n-1)/2$ to be prime, and determine the conditions under which this family has small $p$-rank.

%{\color{blue} DD: Maybe adapt this title? Also, some formulas and phrasings in this section should be adjusted to improve the flow. Please check this.}

\begin{lemma}\label{lem: p rank 0 construction}
Let $n \geq 3$ and $t \in k^* = k - \{0\}$, and let $H$ denote the curve given by $H:z^2=x^n-t^n$. If $p\nmid n$ and $p\not\equiv 1\pmod{n}$, then $H$ is not ordinary, i.e.,  $f_H \leq g_H-1$. Moreover, if $p\equiv -1\pmod{n}$, then $H$ is superspecial, so in particular $f_H=0$.
\end{lemma}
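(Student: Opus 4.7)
The strategy is to compute the Hasse--Witt matrix of $D$ explicitly, exploit the very rigid ``monomial'' structure it acquires from the polynomial $f(x) = x^n - t^n$, and conclude via a combinatorial analysis of multiplication by $p$ modulo $n$.

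First I would expand $f(x)^{(p-1)/2} = \sum_{k=0}^{(p-1)/2}\binom{(p-1)/2}{k}(-t^n)^{(p-1)/2-k} x^{nk} =: \sum_m c_m x^m$, observing that $c_m = 0$ unless $n \mid m$ and that each $c_{nk}$ is a nonzero unit in characteristic $p$ (the binomial coefficient is a unit since $0 \le k \le (p-1)/2 < p$, and $t \ne 0$). In the standard basis $\{x^{i-1}\,dx/z\}_{i=1}^g$ for $H^0(D, \Omega^1_D)$, where $g = \lfloor (n-1)/2\rfloor$ and the hypothesis $p \nmid n$ guarantees smoothness of $D$, the Hasse--Witt matrix has entries $H_{ij} = c_{pi-j}$. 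The divisibility $n \mid (pi-j)$ together with $g < n/2$ pins $j$ down uniquely modulo $n$, namely $j = \phi(i) := pi \bmod n$; and this $j$ lies in $L := \{1, \ldots, g\}$ precisely when the corresponding entry is nonzero (the range condition $0 \le (pi - j)/n \le (p-1)/2$ is automatic for $i \le g$, since $pg/n < p/2$). Hence $H$ is a ``monomial'' matrix: row $i$ has at most one nonzero entry, in column $\phi(i)$, and this entry is nonzero iff $\phi(i) \in L$.

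For the superspecial statement, when $p \equiv -1 \pmod{n}$ one has $\phi(i) = n - i > g$ for every $i \in L$ (since $g < n/2$), so every row of $H$ vanishes and $H = 0$; this is equivalent to $D$ being superspecial. For the non-ordinary statement, it suffices to exhibit a single $i \in L$ with $\phi(i) \notin L$, which forces a zero row and hence $H$ singular. Let $p' := p \bmod n$; if $p' > g$ then $\phi(1) = p' \notin L$ and we are done. Otherwise $2 \le p' \le g$, and I would take $i_0 := \lceil n/(2p')\rceil$: the bound $p' < n/2$ gives $n/2 \le p' i_0 < n$, so $\phi(i_0) = p' i_0 \ge n/2 > g$. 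A routine estimate shows $i_0 \le g$ for all but very small $n$, and the remaining cases $n \le 7$ fall under the first case or are checked directly.

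The main obstacle is this final combinatorial step: showing that the ``lower half'' $L$ of $\mathbb{Z}/n$ cannot be stable under multiplication by $p$ unless $p \equiv 1 \pmod{n}$. A natural sum-of-elements argument, using $\sum_{i \in L} \phi(i) = \sum_{i \in L} i$, yields the clean conclusion for $n$ odd (since then $\sum_{i \in L} i = (n^2-1)/8$ is coprime to $n$, so the relation $(p-1)\sum i \equiv 0 \pmod{n}$ forces $p \equiv 1 \pmod{n}$), but becomes too weak when $n$ is even; the explicit construction of $i_0$ above bypasses this parity issue and handles all $n$ uniformly.
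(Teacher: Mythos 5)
Your proposal is correct and follows essentially the same approach as the paper: both exploit the monomial structure of the Hasse--Witt/Cartier--Manin matrix of $z^2=x^n-t^n$ --- you read it off directly from the expansion of $f(x)^{(p-1)/2}$, while the paper organizes the same fact via the eigenspaces of the $\mathbb{Z}/n\mathbb{Z}$-action $(x,z)\mapsto(\zeta_n x,z)$, which is just a conceptual repackaging --- and then exhibit an index $i\in\{1,\dots,g\}$ with $pi\bmod n\notin\{1,\dots,g\}$, forcing a zero row. Your witness $i_0=\lceil n/(2p')\rceil$ differs slightly from the paper's $\lceil(g+1)/a\rceil$ (for which $i\le g$ is immediate from $a\ge 2$, $g\ge 1$), but it serves the same purpose; in fact $\lceil n/4\rceil\le g$ holds for every $n\ge 3$, so $i_0\le g$ is automatic and the small-$n$ case check you hedge about is not actually needed.
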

\begin{proof}
For $H:z^2=x^n-t^n$, its genus is $g_H=\lceil\frac{n}{2}\rceil-1$. Let $M$ be the Hasse-Witt matrix of $H$. Note that $H$ is ordinary if and only if the $p$-rank of $D$ is $g_H$, which is if and only if the rank of $M$ is $g_H$.

It is known that $H^1(H,\mathcal{O}_{H})$ has basis given by $\frac{z}{x^k}$ for $1\leq k\leq g_H$.
Note that $\Z/n\Z$ acts on $H$, where $1 \in \Z/n\Z$ acts by sending $(x,z)$ to $(\zeta_n x,z)$. Since $p \nmid n$, this action leads to the decomposition $$H^1(H,\mathcal{O}_{H})=\bigoplus_{i=0}^{n-1}H^1(H,\mathcal{O}_{H})_i.$$ 
Here, $H^1(H,\mathcal{O}_{H})_i$ is the eigenspace where $1 \in \Z/n\Z$ acts via scalar multiplication by $\zeta_n^i$. 

%{\color{blue} DD: what is (Ref) below?}{\color{red} Added the ref}

By the computation in \cite[\S 5]{Bouw_2001}, %{\color{teal} We refer to sections within citations inconsistently, in other places we have \S.} %for $ n-g_H\leq i\leq n-1$, we have

$$H^1(H,\mathcal{O}_{H})_i=\begin{cases}
    \left\langle \frac{z}{x^{n-i}} \right\rangle & \text{if $ n-g_H\leq i\leq n-1$} \\
    0 & \text{if $0\leq i\leq n-g_H-1$}
\end{cases}$$ %$H^1(H,\mathcal{O}_{H})_i =\langle \frac{z}{x^{n-i}}\rangle$ and for $ 0\leq i\leq n-g_H-1$, we have $H^1(H,\mathcal{O}_{H})_i =0$. 
Moreover, $M$ maps $H^1(H,\mathcal{O}_{H})_i$ to $H^1(H,\mathcal{O}_{H})_{pi\pmod{n}}$. This means the rank of~$M$ equals $\#S_1$, where $S_1$ is the following set:
\[S_1  =\{n-g_H\leq i\leq n-1 \mid n-g_H\leq pi\pmod{n}\leq n-1\}.\]
Clearly, it has the same size as
\[S_2=\{1\leq i\leq g_H \mid 1\leq pi\pmod{n}\leq g_H\}.\]
Note that $H$ is ordinary if and only if $S_2=\{1,2,\dots,g_H\}$. 
Let $a$ be the congruence class of $p$ modulo $n$. The assumption $p \not \equiv 1 \pmod{n}$ implies $2\leq a\leq n-1$. Depending on the value of $a$, we have the following cases:
\begin{enumerate}
    \item If $g_H+1\leq a\leq n-1$: then $1\notin S_2$, so $H$ is not ordinary.
    \item If $2\leq a\leq g_H$: Let $i=\lceil\frac{g_H+1}{a}\rceil$, so $1\leq i\leq g_H$. Moreover, observe $$g_H+1\leq ai< g_H+1+a\leq 2g_H+1\leq n.$$ This implies that $i\notin S_2$, so $H$ is not ordinary.
\end{enumerate}
In both cases, $H$ is not ordinary.
In addition, if $p \equiv -1 \pmod{n}$, then $S_2=\varnothing$. Therefore, $M$ has rank $0$ and hence the $p$-rank of $H$ is also $0$.
\end{proof}

\begin{remark}
For the curves $H: y^2 = x^n - t^n$ considered in Lemma~\ref{lem: p rank 0 construction}, it is known that $\Jac(H)$ decomposes as a product of CM abelian varieties. Therefore, the Newton polygon of $H$ can be computed using the Shimura-Taniyama formula \cite[Section~5]{Tate}. Since the Newton polygon determines the $p$-rank, this approach provides an alternative, though less direct, proof of Lemma~\ref{lem: p rank 0 construction}.
\end{remark}

Now, we can can construct $D$ with small $p$-rank by carefully choosing the branching locus of $H$.

\begin{proposition}\label{prop: general $g$}
Let $g>3$, $n=g-1$, and let $E: y^2=x(x-1)(x-\lambda)$ be an elliptic curve over~$k$. 
For $t \in k^*$, let $H_t$ denote the hyperelliptic curve $H_t: z^2 = x^n - t^n$, and let $D_t$ denote the normalization of $H_t \times_{\mathbb{P}^1} E$. Then we have the following:
\begin{enumerate}
\item Suppose $p\not\equiv 1\pmod{n}$ and $p \nmid n$. Then $f_{D_t} \leq f_E+g-2$. In particular, $D_t$ is not ordinary.

\item Suppose further $p \equiv -1 \pmod{n}$, then we have $f_{D_t} \leq f_E+\lceil\frac{g}{2}\rceil$. In particular, it is bounded from above by $\lceil\frac{g}{2}\rceil+1$.
\end{enumerate}

\end{proposition}
\begin{proof}
    We first show that $\Jac(D_t)_{\textup{new}}$ is not ordinary. Observe that $D_t$ has two hyperelliptic quotients: \[H_t: z^2=x^n-t^n  \text{ and }  H'_t: w^2=x(x-1)(x-\lambda)(x^n-t^n)\]
    Since $\Jac(D_t)_{\textup{new}} \sim \Jac(H_t) \oplus \Jac(H'_t)$, it suffices to show that at least one of the factors is not ordinary. By Lemma \ref{lem: p rank 0 construction}, we have that $f_{H_t} \leq g_{H_t}-1$, so $H_t$ is not ordinary, and hence $\Jac(D)_{\textup{new}}$ is not ordinary. Furthermore, since $\Jac(D_t)\sim \Jac(D_t)_{\textup{new}} \oplus \Jac(E)$, we see that $f_{D_t} \leq f_E+g-2$. Moreover, when $p \equiv -1 \pmod{n}$, we obtain $f_{H_t}=0$, so $f_{\Jac(D_t)_{\textup{new}}} = f_{H_t'} \leq g_{H_t'}=\lceil \frac{n+1}{2} \rceil=\lceil \frac{g}{2} \rceil$. Hence, $f_{D_t} \leq \lceil \frac{g}{2} \rceil+f_E \leq \lceil \frac{g}{2} \rceil+1$.

%From f_{\Jac(D)_{\textup{new}}}=f_{D_2}+f_{D_3}$, where $D_3: w^2=x(x-1)(x-\lambda)(x^n-t^n)$. In particular, $f_{D_3} \leq g_{D_3} \leq \lceil\frac{n+1}{2}\rceil$.
%By Lemma \ref{lem: p rank 0 construction}, we have that $f_{D_2} \leq g_{D_2}-1$, hence $\Jac(D)_{\textup{new}}$ is not ordinary.

\end{proof}

Recall the decomposition in~\eqref{eq: decomposition of Jacobian}. In Proposition~\ref{prop: general $g$}, we force $D$ to have a small $p$-rank by arranging that $H$ has small $p$-rank. In Propositions~\ref{prop: odd n} and~\ref{prop: even n}, we force $D$ to have an even smaller $p$-rank by arranging that $H'$ has small $p$-rank, noting that $g_{H'}>g_H$.
Note that, when $g$ is even, the upper bound of $p$-rank in Proposition \ref{prop: general $g$} is $\frac{g}{2}$+1. In the following proposition, we construct a family $\D_t$ with upper bound being $\frac{g}{2}$ given that $g$ is even.
\begin{proposition}\label{prop: odd n}
     Let $g>3$ be an even number and $p \equiv -1 \pmod{g+2}$. Fix the elliptic curve ${E: y^2=x(x-1)(x-\lambda)}$ over $k$ and a primitive $(g+2)^{th}$ root of unity $\zeta_{g+2}$. For $t \in k^*$, let $H_t$ denote the hyperelliptic curve $H_t: z^2=\prod_{k=1}^{g-1}(x-x_k)$, where  the branching points $x_1, \cdots, x_{g-1} \in \Po(k)$ are chosen, such that, %given by the following construction: 
     %Let $\zeta_{g+2}$ be a primitive $(g+2)^{th}$ root of unity. 
     %Let $x_1, \cdots, x_{g-1} \in \Po(k)$ be determined by the following condition:
     if $T$ is the unique linear fractional transformation that sends $(t, \zeta_{g+2}t, \zeta_{g+2}^2t)$ to~$(0,1, \lambda)$, then $x_i=T(\zeta_{g+2}^{i+2}t)$. 
     %Let $H_t$ is the hyperelliptic curve given by $H_t: z^2=\prod_{k=1}^{g-1}(x-x_k)$.
     Lastly, let $D_t$ be the normalization of $H_t \times_{\Po} E$. Then $f_{D_t}\leq \frac{g}{2}-1+f_E$; in particular, $f_{D_t}\leq \frac{g}{2}$.
\end{proposition}
\begin{proof}
Observe that $D_t$ has two hyperelliptic quotients: 
\[H_t: z^2=\prod_{k=1}^{g-1}(x-x_k)\hspace{3mm} \text{ and } \hspace{3mm} H'_t: w^2=x(x-1)(x-\lambda)\prod_{k=1}^{g-1}(x-x_k).\] 
Furthermore, we have $f_{\Jac(D_t)_{\textup{new}}}=f_{H_t}+f_{H_t'}$. Since $g+2$ is even, the branching points of $H_t'$ are exactly $0,1,\lambda, x_1, \cdots, x_{g-1}$ and it is not branched at~$\infty$. By the change of variables $x'=T^{-1}x$, we see that $H_t'$ is isomorphic to the curve given by $w^2=x'^{g+2}-t^{g+2}$. By Lemma \ref{lem: p rank 0 construction}, we see that when $p \equiv -1 \pmod{g+2}$, then $f_{H_t'}=0$. Hence, $f_{\Jac(D_t)_{\textup{new}}} \leq f_{H_t} \leq g_{H_t} = \lceil\frac{g-1}{2}\rceil-1$. We conclude $f_{D_t} \leq \frac{g}{2}-1+f_E$.  
\end{proof}

When $g$ is odd, notice that the upper bound given by Proposition~\ref{prop: general $g$} is $\frac{g+1}{2}+1$. In the following proposition, we construct a family $\D_t$ whose $p$-rank upper bound becomes $\frac{g+1}{2}$ given that $g$ is odd. 

\begin{proposition}\label{prop: even n}
     Let $g>3$ be an odd number and $p \equiv -1 \pmod{g+1}$. Fix the elliptic curve $E: y^2=x(x-1)(x-\lambda)$ over $k$ and a primitive $(g+1)^{th}$ root of unity $\zeta_{g+1}$. For $t \in k^*$, let~$H_t$ denote the hyperelliptic curve $H_t: z^2=x(x-1)\prod_{k=1}^{g-1}(x-x_k)$, where $x_1, \dots, x_g \in \Po(k)$ are chosen such that, if $T$ is one of the linear fractional transformations that sends $(t, \zeta_{g+1}t)$ to $(0,1)$, then~$x_i=T(\zeta_{g+1}^{i+1}t)$. %given by the following construction: Let $\zeta_{g+1}$ be a primitive $(g+1)^{th}$ root of unity. Let $x_1, \cdots, x_{g-1} \in \Po(k)$ be determined by the following condition:  Let $H_t$ is the hyperelliptic curve given by $H_t: z^2=x(x-1)\prod_{k=1}^{g-1}(x-x_k)$.
     Lastly, let $D_t$ be the normalization of $H_t \times_{\Po} E$. Then $f_{D_t}\leq \frac{g-1}{2}+f_E \leq \frac{g+1}{2}$.
\end{proposition}
\begin{proof}
Observe that $D_t$ has two hyperelliptic quotients: 
\[H_t: z^2=x(x-1)\prod_{k=1}^{g-1}(x-x_k)\hspace{3mm} \text{ and } \hspace{3mm} H'_t: w^2=(x-\lambda)\prod_{k=1}^{g-1}(x-x_k).\] 
Furthermore, we have $f_{\Jac(D_t)_{\textup{new}}}=f_{H_t}+f_{H_t'}$. Since $g+1$ is even, the branching points of $H_t$ are exactly $0,1, x_1, \cdots, x_{g-1}$ and it is not branched at~$\infty$. By the change of variable $x'=T^{-1}x$, we see that $H_t$ is isomorphic to the curve given by $w^2=x'^{g+1}-t^{g+1}$. By Lemma \ref{lem: p rank 0 construction}, we see that $p \equiv -1 \pmod{g+1}$ implies $f_{H_t}=0$. Hence, we conclude $f_{\Jac(D_t)_{\textup{new}}} \leq f_{H_t'} \leq g_{H_t'} = \lceil\frac{g}{2}\rceil-1$. Hence, $f_{D_t} \leq \frac{g+1}{2}-1+f_E$.  
\end{proof}

\appendix

\section{The geometry of $\overline{M}_{E,g}$}\label{appendix:geometry_meg}

In this appendix, we cover some of the more technical details of the geometry of $\overline{M}_{E,g}$ and its marked versions $\overline{M}_{E,g;n}$, including its construction as a stack of twisted stable maps and the proof of key properties like smoothness and properness.

\subsection{Stable maps to Deligne-Mumford stacks}\label{section:stable_maps_dm}

We will define $M_{E,g;n}$, which parametrizes genus-$g$ double covers $\pi: D \to E$ with $n$ marked points of $E$ above which $\pi$ is unramified. We will define $\mb_{E,g;n}$ as a closed substack of the moduli stack of (twisted) stable maps to $E \times B\mathbb{Z}/2\mathbb{Z}$ and then define $M_{E,g;n} \subset \mb_{E,g;n}$ as the open substack where the source curve is smooth. Before defining our specific moduli space, we take some time to recall the definition and basic properties of moduli stacks of stable maps to Deligne-Mumford stacks, as studied in \cite{abramovich2002compactifying} and \cite{abramovich2003twisted}. Recall that a Deligne-Mumford stack $\mathcal{X}$ over a base $S$ is \emph{tame} if the orders of all automorphism groups of $\mathcal{X}$ are invertible in $S$.

\begin{definition}[Variant of {\cite[Definition 4.1.2]{abramovich2002compactifying}}]\label{definition:twisted_curve}
A \emph{twisted nodal $n$-pointed curve over a scheme $S$} is a collection of data $(\mathcal{C} \to C \to S,\sigma_i)$, where \begin{enumerate}[(1)]
\item $\mathcal{C}$ is a tame Deligne-Mumford stack, proper over $S$, which is \'{e}tale locally a nodal curve over $S$;

\item For $1 \le i \le n$, $\sigma_i$ is a section $S \to C$ with image in the smooth locus of $C$;

\item The preimages of $\sigma_i(S)$ under the map $\mathcal{C} \to C$ are \'{e}tale gerbes over $S$: More precisely, the $\sigma_i(S)$ are locally isomorphic to $S \times B\mathbb{Z}/r\mathbb{Z}$ for some $r$; 

\item The morphism $\mathcal{C} \to C$ exhibits $C$ as the coarse moduli scheme of $\mathcal{C}$;

\item Outside the images of the $\sigma_i$, the map $\mathcal{C} \to C$ is an isomorphism over the smooth locus;

\item Above the nodes, $\mathcal{C}$ is \emph{balanced} in the following sense: \'{E}tale-locally above the nodes of $C$, the morphism $\mathcal{C} \to S$ looks like $$\left[\frac{\Spec (A[x,y]/(xy - t))}{\mu_r}\right] \to \Spec (A),$$ where $t$ is an element of $A$ and $\zeta \in \mu_r$ acts on $A[x,y]/(xy - t)$ by $(x,y) \mapsto (\zeta x,\zeta^{-1}y)$.
\end{enumerate}
\end{definition}

\begin{remark}\label{remark:orbi_balanced}
This differs from \cite[Definition 4.1.2]{abramovich2002compactifying} slightly in that we require the stacky nodes to be balanced (6). Because we are trying to compactify moduli spaces $M_{E,g;n}$ of smooth double covers, we are only interested in admissible covers that can deform to smooth covers, and admissible covers of balanced twisted curves are precisely the smoothable covers. Considering only balanced twisted curves is not an issue for representability of our moduli stacks because having balanced nodes is an open and closed condition.
\end{remark}

By \cite[Proposition 4.2.2]{abramovich2002compactifying}, the 2-category of twisted nodal $n$-pointed curves is equivalent to a 1-category, so it makes sense to talk about morphisms of twisted curves without having to worry about 2-morphisms.

Now to define twisted stable maps, let $\mathbb{S}$ be a fixed Noetherian scheme, and let $\mathcal{X}/\mathbb{S}$ be a proper tame Deligne-Mumford stack admitting a projective coarse moduli scheme $X$. Fix an ample invertible sheaf on $X$, so that it makes sense to talk about the degree of a map from a curve to $X$. 

\begin{definition}[{\cite[Definition 4.3.1]{abramovich2002compactifying}}]
A \emph{twisted stable $n$-pointed map of genus $g$ and degree $d$ over $S$} is a collection of data $(\mathcal{C} \to S,\sigma_i,f: \mathcal{C} \to \mathcal{X})$ such that
\begin{enumerate}[(1)]
\item $(\mathcal{C} \to C \to S,\sigma_i)$ is a genus-$g$ twisted nodal $n$-pointed curve;

\item $f$ is a representable morphism;

\item The map on coarse spaces $(C \to S,\sigma_i,f: C \to X)$ is a stable $n$-pointed map of degree $d$.
\end{enumerate}
\end{definition}

Again, by \cite[Proposition 4.2.2]{abramovich2002compactifying}, twisted stable maps form a 1-category, so we don't need to worry about 2-morphisms. 
This means that we can define the stack $\mathcal{K}_{g,n}^{\text{bal}}(\mathcal{X},d)$ over $\mathbb{S}$ of twisted stable $n$-pointed maps of genus $g$ and degree $d$, as in \cite[\S3.8]{abramovich2002compactifying}.

\begin{theorem}[{\cite[Theorem 1.4.1]{abramovich2002compactifying}}]\label{theorem:kgn_exists}
$\mathcal{K}_{g,n}^{\text{bal}}(\mathcal{X},d)$ is a proper algebraic stack. Furthermore, the map $\mathcal{K}_{g,n}^{\text{bal}}(\mathcal{X},d) \to \mathcal{K}_{g,n}(X,d)$ is of Deligne-Mumford type.
\end{theorem}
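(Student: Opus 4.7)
The plan is to deduce both assertions from the analogous results for the stack $\mathcal{K}_{g,n}(X,d)$ of stable maps to the coarse space $X$, which is a proper Deligne-Mumford stack by the work of Kontsevich and Behrend-Manin. The key is to analyze the forgetful morphism $F \colon \mathcal{K}_{g,n}^{\text{bal}}(\mathcal{X},d) \to \mathcal{K}_{g,n}(X,d)$ that records the coarse curve together with the induced map $C \to X$, and show that $F$ is representable by proper algebraic stacks with finite inertia. Both claims of the theorem then follow: algebraicity and the Deligne-Mumford property propagate from $\mathcal{K}_{g,n}(X,d)$ through the representable morphism $F$, and properness of $\mathcal{K}_{g,n}^{\text{bal}}(\mathcal{X},d)$ follows by composing properness of $F$ with that of its target.

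To describe a fiber of $F$, I would fix a stable map $(C \to S,\sigma_i,f \colon C \to X)$ and parametrize pairs $(\mathcal{C} \to C, \tilde{f} \colon \mathcal{C} \to \mathcal{X})$ where $\mathcal{C}$ is a balanced twisted curve with coarse space $C$ and $\tilde{f}$ is a representable lift of $f$. Since $\mathcal{C} \to C$ is an isomorphism outside the marked sections and the nodes, the nontrivial data is concentrated at those special points. At each such point, adding a cyclic gerbe of order $r$ together with a representable lift corresponds, under tameness of $\mathcal{X}$, to choosing a connected tame cyclic cover of a punctured local neighborhood and lifting the map into $\mathcal{X}$; since $\mathcal{X}$ is Deligne-Mumford and tame, these choices form a bounded discrete set. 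I would formalize the fiber as a disjoint union indexed by the combinatorial data of stacky orders, and within each stratum realize the moduli of lifts as a Hom-stack $\mathrm{Hom}_X(\mathcal{C},\mathcal{X})$ constrained to representable morphisms. Algebraicity and the Deligne-Mumford property of this Hom-stack follow from general representability results for Hom-stacks into tame Deligne-Mumford stacks, with finiteness of automorphisms guaranteed by representability of $\tilde{f}$.

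For properness of $F$, I would apply the valuative criterion. Given a DVR $R$ with fraction field $K$ and a twisted stable map $(\mathcal{C}_K,\tilde{f}_K)$ over $K$ whose coarse stable map extends to $C_R \to X$ (possible because $\mathcal{K}_{g,n}(X,d)$ is proper), the remaining task is to extend $\tilde{f}_K$. Away from the nodes of the special fiber of $C_R$, the extension is unique and exists by purity for tame Deligne-Mumford stacks. At each node the generic fiber carries a tame cyclic cover of a punctured neighborhood induced by $\tilde{f}_K$, and Abhyankar's lemma produces, possibly after a tamely ramified base change of $R$, a unique balanced local model of the form $[\Spec(R[x,y]/(xy - t))/\mu_r]$ together with a compatible lift of $f$ to $\mathcal{X}$. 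Uniqueness of the extension is precisely where the balanced condition of Definition \ref{definition:twisted_curve} enters: it rules out the unbalanced twist and pins down a single choice.

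The main obstacle I anticipate is this last step, namely producing and uniquely determining the stacky structure at new nodes of the special fiber. It requires delicate use of the tameness of $\mathcal{X}$ (so that all relevant ramification is controlled), purity of the branch locus for covers of two-dimensional regular schemes, and the balanced local model, and it is where the assumption that the automorphism orders of $\mathcal{X}$ are invertible on $\mathbb{S}$ is truly essential. Once the local structure of balanced twisted covers is pinned down, the remaining formal properties (finiteness of the inertia of $F$, separatedness, finite type, etc.) reduce to routine consequences of the corresponding facts for $\mathcal{K}_{g,n}(X,d)$.
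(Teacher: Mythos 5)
This statement is quoted from Abramovich--Vistoli \cite[Theorem 1.4.1]{abramovich2002compactifying}; the paper itself gives no proof, so there is nothing internal to compare your sketch against. Judged as an outline of the cited proof, your proposal captures the overall strategy: reduce to the known properties of $\mathcal{K}_{g,n}(X,d)$ via the forgetful morphism $F$, and carry the hard work in a valuative-criterion argument that pins down the stacky structure over new nodes using tameness, Abhyankar's lemma, and the balanced hypothesis. That is indeed the skeleton of \S 6--\S 8 of Abramovich--Vistoli, and you correctly identify the balanced condition as what rules out the bad local model and forces uniqueness of the extension at new nodes.

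Two things in your sketch are glossed over in a way that would matter in an actual write-up. First, before you can form and control $\mathrm{Hom}$-stacks $\mathrm{Hom}_C(\mathcal{C},\mathcal{X})$, you need the moduli stack of (balanced, $n$-pointed) twisted curves itself to be algebraic and locally of finite presentation over the moduli of prestable curves; this is a genuine prerequisite result (Abramovich--Vistoli devote real effort to it, and the stratification by stacky orders at special points you invoke is not enough on its own because within a fixed combinatorial type the twisted structure still carries nontrivial moduli/gerbe data). Second, in the valuative criterion step, one cannot in general take the coarse stable-map limit $C_R$ and then lift: the correct twisted extension may require modifying the limit curve (blowing up and re-stabilizing) because the twisted curve over $K$ imposes compatibility constraints that the naive coarse limit does not satisfy. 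Abramovich--Vistoli handle this by constructing the extension on the twisted side first (on a suitable modification) and then re-stabilizing, rather than by ``extend the coarse map, then lift.'' Your outline is therefore right in spirit but too optimistic at exactly the point you flag as the main obstacle; the fix requires allowing modifications of $C_R$, not just local surgery at the nodes of the chosen limit.

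Finally, a small terminological point: $F$ is not representable (its fibers are Deligne--Mumford stacks, not algebraic spaces), so you should say $F$ is of Deligne--Mumford type, or relatively Deligne--Mumford with finite inertia, as the theorem itself states. The conclusion you want does not need representability; relative DM with finite, hence quasi-finite and separated, inertia already lets algebraicity, properness, and boundedness pass through $F$.
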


\subsection{The definition of $\overline{M}_{E,g}$ and $\mb_{E,g;n}$}\label{section:definition_meg}

In this section, we define the Deligne-Mumford stack $\mb_{E,g;n}$ as certain substack of the stack $\mathcal{K}_{1,2g-2+n}^{\text{bal}}(E \times B\Z/2\Z,1)$. In the rest of the paper, we require $g \geq 1, n \geq 0$, and $g+n \geq 2$ for $\mb_{E,g;n}$, so that the object in the moduli problem has finite automorphism group. 

\begin{definition}\label{def: mbEgn}
For $p \neq 2$, let $k$ be an algebraically closed field of characteristic $p$, and let $\mathbb{S}=\Spec(k)$. We define the following Deligne-Mumford stacks:
\begin{itemize}
    \item We define $\mt_{E,g;n}/\mathbb{S}$ as the open and closed substack of $\mathcal{K}_{1,2g-2+n}^{\text{bal}}(E \times B\Z/2\Z,1)$ parametrizing genus 1 stable maps
    \[(\mathcal{C} \to S, (\sigma_1, \dots, \sigma_{2g-2}, \tau_1, \dots, \tau_n), f: \mathcal{C} \to E \times B\Z/2\Z)\]
    such that the automorphism groups of points of $\mathcal{C}$ above $\sigma_i$ have order $2$, and the automorphism groups of points on $\mathcal{C}$ above the $\tau_j$ have order $1$. The number $g$ is the genus of the double cover $D \to \mathcal{C}$ classified by $\mathcal{C} \to B\Z/2\Z$.
    
    \item We define $\mb_{E,g;n} \coloneqq [\mt_{E,g;n}/S_{2g-2}\times S_{n}]$, where $S_{2g-2}$ acts by permuting the first $2g-2$ marked points (the branching locus) and $S_n$ permutes the $n$ unbranched marked points. 
    
    \item We further define $M_{E,g;n} \subset \mb_{E,g;n}$ as the open substack consisting of stable maps with smooth source curve.

    \item When there is no unbranched marked point (that is when $n=0$), we denote $\mt_{E,g}, \mb_{E,g}$ and $M_{E,g}$ for the corresponding Deligne-Mumford stack.
\end{itemize}
\end{definition}

\begin{remark}\label{rem: describing the $k$ point of $M_{E,g,n}$}
To unravel the definition of $\mb_{E,g;n}$, we describe its $k$-points. To this end, suppose $(\mathcal{C} \to \Spec(k),\Sigma_{\br},\Sigma_{\ub};f: \mathcal{C} \to E \times B\mathbb{Z}/2\Z)$ is a $k$-point of $\mb_{E,g;n}$. To describe the map $f: \mathcal{C} \to E \times B\mathbb{Z}/2\Z$, we separately describe the content of the maps $\mathcal{C} \to E$ and $\mathcal{C} \to B\mathbb{Z}/2\Z$:
\begin{itemize}
\item The map $\mathcal{C} \to E$: Describing a map $\mathcal{C} \to E$ is equivalent to describing a map of coarse spaces $C \to E$. This map $C \to E$ has degree 1, meaning that one component of the normalization $\widetilde{C}$ of $C$ maps isomorphically onto $E$, while all other components map to points on $E$. Because $C$ has genus 1, $C$ must look like a genus 1 curve identified with $E$ by the stable map with genus 0 bubbles attached to it. Because $f: C \to E$ is stable, all the genus 0 components of $C$ must have at least 3 special points.

\item The map $\mathcal{C} \to B\Z/2\Z$: By \cite[Theorem 4.3.2]{abramovich2003twisted}, this is equivalent to the data of an admissible $\mathbb{Z}/2\Z$-cover $D \to C$ in the sense of \cite[Definition 4.3.1]{abramovich2003twisted}. For the reader unfamiliar with admissible covers, an admissible $\mathbb{Z}/2\Z$-cover $D \to C$ is essentially a $\mathbb{Z}/2\Z$-cover where we only allow branching above the marked points and nodes. Because we require the stabilizers of $\mathcal{C}$ at $\Sigma_{\br}$ to be nontrivial and the map $\mathcal{C} \to B\mathbb{Z}/2\Z$ to be representable, the $\mathbb{Z}/2\Z$-cover $D \to C$ must be branched over $\Sigma_{\br}$. Similarly, the stabilizers of $\mathcal{C}$ at $\Sigma_{\ub}$ are trivial, so it is unbranched over $\Sigma_{\ub}$.

If $g\geq 2$, then the fact that there are $2g - 2$ branch points ensures that the genus of $D$ is $g$ by the Riemann-Hurwitz formula. We discuss the $g=1$ case in Remark \ref{rmk: g eq one}.
\end{itemize}

In summary, a $k$-point $(\mathcal{C} \to \Spec (k),\Sigma_{\br},\Sigma_{\ub},f: \mathcal{C} \to E \times B\mathbb{Z}/2\Z)$ of $\overline{M}_{E,g;n}$ is the same data as a nodal curve $C$ with one component identified with $E$ and all other components genus 0 bubbles, along with an admissible $\mathbb{Z}/2\Z$-cover $\pi: D \to C$ branched above $\Sigma_{\br}$ and possibly the nodes. The $k$-point lies in $M_{E,g;n}$ if $C = E$. Note that because $g+n \ge 2$, there must be at least one marked point, and $(C,\Sigma_{\br} \cup \Sigma_{\ub})$ must actually be stable.
This implies that $(D,\Sigma_{\br}^{\prime}\cup \Sigma_{\ub}^{\prime})$ is also stable, where $\Sigma_{\br}^{\prime}$ is the divisor of ramification points over the marked points $\Sigma_{\br}$ (i.e. $\Sigma_{\br}^{\prime} = \pi^{-1}(\Sigma_{\br})_{\mathrm{red}}$) and $\Sigma_{\ub}^{\prime} = \pi^{-1}(\Sigma_{\ub})_{\mathrm{red}}$.
\end{remark}

\begin{remark}\label{rmk: s-vz comparison}
Instead of constructing $\Bb_{E,g}$ as the image of the forgetful map $\mb_{E,g} \to \overline{\mathcal{M}}_g$, one could alternatively use the stack $\overline{\mathcal{H}}_{g,G,\xi}$ of admissible double covers of genus 1 curves, as defined in \cite[\S 1.3]{schmittvanzelm}, with Galois group $G = \mathbb{Z}/2\mathbb{Z}$ and ramification $\xi = (1,\ldots,1)$. In Section~\ref{Sec: Bielleptic locus}, we denote $\overline{\mathcal{B}}_g^{\rm adm} = \overline{\mathcal{H}}_{g,\mathbb{Z}/2\mathbb{Z},\xi}$.
Recall from Section~\ref{Sec: Bielleptic locus} that $\overline{\mathcal{B}}_g^{\rm adm}$ admits a map $\Psi: \overline{\mathcal{B}}_g^{\rm adm} \to \overline{\mathcal{M}}_g$, extracting the source curve, and a map $\Phi:\overline{\mathcal{B}}_g^{\rm adm} \to \overline{\mathcal{M}}_{1,1}$, extracting the target curve with one marked point. After fixing a point $[E]=(E,O_E) \in \overline{\mathcal{M}}_{1,1}$, we may alternatively define $\overline{\mathcal{B}}_{E,g}=\Psi(\Phi^{-1}([E]))$. The fact that this definition is equivalent to our definition follows from \cite[Theorem 3.7]{schmittvanzelm} and Proposition~\ref{forgetful map is quasi-finite}.
The reason we instead construct $\mb_{E,g}$ from scratch and realize $\Bb_{E,g}$ as the image of $\mb_{E,g}$ under the forgetful map is that our construction allows us to prove that $\Bb_{E,g}$ is irreducible. %like smoothness and irreducibility. {\color{teal} Question: do we still prove/claim that $\overline{\mathcal{B}}_{E,g}$ is smooth?}

\end{remark}

\begin{remark}\label{rmk: g eq one}
    
    We take a moment to describe the points of $\overline{M}_{E,1}$, which correspond to \'{e}tale double covers of $E$. There are $4$ \'{e}tale double covers. Suppose $E[2](k)=\{O, P_1, P_2, P_3\}$, then the \'{e}tale covers of $E$ are of the following two types:
    \begin{enumerate}
        \item $f_0: E \sqcup E \to E$ is the disconnected $\Z/2\Z$-cover;
        \item $f_i: E/\langle P_i \rangle \to E$ for $1 \leq i \leq 3$, obtained as a factor of the multiplication by $2$ map on the elliptic curve $E$.
        \end{enumerate}
        Hence, $M_{E,1} \cong \mb_{E,1} \cong \mt_{E,1}$, and the coarse moduli space consists of four points.
\end{remark}
\begin{remark}\label{remark:arb_hurwitz}
A variation of the above construction where we consider stable maps to $C \times BG$ can be used to construct spaces of admissible $G$-covers of a fixed base curve $C$ for arbitrary $G$ and $C$, over a base scheme $\mathcal{S}$ such that $\#G$ is coprime to the characteristic of $\OO_S$.
\end{remark}

\subsection{Deformation theory of $\overline{M}_{E,g;n}$}\label{section:def_thy_meg}
In this section, we show that $\overline{M}_{E,g;n}$ is Deligne-Mumford and smooth and calculate its dimension.

\begin{proposition}\label{proposition:meg_proper_dm}
$\overline{M}_{E,g;n}$ is a proper Deligne-Mumford stack.
\end{proposition}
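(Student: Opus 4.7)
The plan is to deduce both properness and the Deligne--Mumford property from the general theory of twisted stable maps recalled in Theorem~\ref{theorem:kgn_exists}, and then transfer them to the quotient $\mb_{E,g;n} = \mt_{E,g;n}/(S_{2g-2}\times S_n)$. First I would observe that by construction $\mt_{E,g;n}$ is an open and closed substack of the stack $\mathcal{K}_{1,2g-2+n}^{\text{bal}}(E \times B\Z/2\Z,1)$: the locus where the stabilizers at the first $2g-2$ sections have order exactly $2$ and those at the last $n$ sections are trivial is cut out by open and closed conditions on the gerbe orders. Since openness and closedness are preserved under passage to the quotient by a finite group action, it suffices to show that $\mathcal{K}_{1,2g-2+n}^{\text{bal}}(E\times B\Z/2\Z,1)$ is a proper Deligne--Mumford stack.

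Properness is immediate from Theorem~\ref{theorem:kgn_exists}, which asserts that $\mathcal{K}_{g,n}^{\text{bal}}(\mathcal{X},d)$ is a proper algebraic stack for any proper tame Deligne--Mumford target $\mathcal{X}$; here $E\times B\Z/2\Z$ is tame because $p\neq 2$. For the Deligne--Mumford property, Theorem~\ref{theorem:kgn_exists} also tells us that the forgetful morphism
\[
\mathcal{K}_{1,2g-2+n}^{\text{bal}}(E\times B\Z/2\Z,1) \longrightarrow \mathcal{K}_{1,2g-2+n}(E,1)
\]
is of Deligne--Mumford type, so it is enough to know that the target is Deligne--Mumford. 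I would verify this by checking that every stable degree-$1$ map $f\colon C\to E$ of arithmetic genus $1$ with $2g-2+n\geq 1$ marked points has finite automorphism group: exactly one component of $C$ maps isomorphically to $E$, and any automorphism of the stable map must restrict to the identity on that component, so the remaining automorphism data lives on the rational bubbles and is controlled by the usual stability condition (at least three special points per bubble).

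Combining these pieces, $\mt_{E,g;n}$ is a proper Deligne--Mumford stack. Finally, $\mb_{E,g;n}$ is the stack quotient of $\mt_{E,g;n}$ by the finite group $S_{2g-2}\times S_n$ acting on marked points; taking a quotient of a proper Deligne--Mumford stack by a finite group yields another proper Deligne--Mumford stack, proving the proposition.

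The only potentially subtle step is checking that objects of $\mathcal{K}_{1,2g-2+n}(E,1)$ have finite automorphisms; this relies on the stability hypothesis $g+n\geq 2$, together with the fact that the degree-$1$ condition rigidifies the unique genus-$1$ component by forcing it to be identified with $E$ via the map. Once this is in hand, everything else is a formal consequence of Theorem~\ref{theorem:kgn_exists} and the behavior of properness and the Deligne--Mumford property under open/closed immersions and finite group quotients.
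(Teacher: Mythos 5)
Your proposal is correct and follows essentially the same route as the paper: reduce to $\mt_{E,g;n}$ (since $\mb_{E,g;n}$ is its quotient by the finite group $S_{2g-2}\times S_n$), apply Theorem~\ref{theorem:kgn_exists} to get properness and the Deligne--Mumford type of the forgetful morphism to $\mathcal{K}_{1,2g-2+n}(E,1)$, and then check directly that this Kontsevich space of stable maps to $E$ is Deligne--Mumford. One small remark on precision: in characteristic $p$, the Deligne--Mumford condition is that automorphism group schemes are \emph{unramified} (finite \emph{and} reduced), so the correct thing to verify is the absence of nontrivial infinitesimal automorphisms rather than merely finiteness of automorphism groups; your geometric argument (the component mapping isomorphically to $E$ is rigidified by $f$, and the rational bubbles are stable) does in fact rule out infinitesimal automorphisms, so the proof goes through — just state the conclusion in those terms.
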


\begin{proof}
It suffices to show that $\mt_{E,g;n}$ is a proper Deligne-Mumford stack. By Theorem \ref{theorem:kgn_exists}, $\mt_{E,g;n}$ is a proper algebraic stack. Moreover, by Theorem \ref{theorem:kgn_exists}, to show that $\mt_{E,g;n}$ is Deligne-Mumford, it suffices to show that $\mathcal{K}_{1,2g - 2+n}^{\text{bal}}(E,1)$ is Deligne-Mumford.

Let $(C \to \Spec (k),\Sigma_{\br} \cup \Sigma_{\ub},f: C \to E)$ be a stable map corresponding to an object of $\mathcal{K}_{1,2g - 2+n}^{\text{bal}}(E,1)(k)$. We want to show that this object has no nontrivial infinitesimal automorphisms. If $C$ has more than one component, then it is stable, and $C$ automatically has no nontrivial infinitesimal automorphisms. Otherwise, $C \cong E$, and only the trivial infinitesimal automorphism can preserve the isomorphism $f$. Thus, $\mathcal{K}_{1,2g - 2+n}^{\text{bal}}(E,1)$ has no infinitesimal automorphisms and is Deligne-Mumford.
\end{proof}

We use the deformation-obstruction theory for stable maps in \cite[\S5.3]{abramovich2002compactifying} to prove that $\overline{M}_{E,g;n}$ is smooth and compute its dimension. Throughout the following proof, if $f: X \to Y$ is a map, we will use $\mathbb{L}_{X/Y}$ to denote the relative cotangent complex. If $Y = \Spec (k)$, we will just write $\mathbb{L}_X$.

\begin{theorem}\label{theorem:meg_smooth}
$\overline{M}_{E,g;n}$ is smooth of dimension $2g - 2+n$.
\end{theorem}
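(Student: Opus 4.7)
The plan is to apply the deformation-obstruction theory for twisted stable maps developed in \cite[\S5.3]{abramovich2002compactifying}. Since $\mb_{E,g;n}=[\mt_{E,g;n}/(S_{2g-2}\times S_n)]$ is the quotient by a finite group, it suffices to prove that $\mt_{E,g;n}$, which sits as an open and closed substack of $\mathcal{K}_{1,2g-2+n}^{\text{bal}}(E\times B\Z/2\Z,1)$, is smooth of dimension $2g-2+n$.

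For a twisted stable map $f:\mathcal{C}\to \mathcal{X}=E\times B\Z/2\Z$, Abramovich--Vistoli identify the tangent and obstruction spaces at $f$ with $\Ext^1(\LL_{\mathcal{C}/\mathcal{X}},\mathcal{O}_\mathcal{C})$ and $\Ext^2(\LL_{\mathcal{C}/\mathcal{X}},\mathcal{O}_\mathcal{C})$, respectively. The crucial geometric input is that $f^*\Omega_\mathcal{X}\cong\mathcal{O}_\mathcal{C}$: the $B\Z/2\Z$ factor contributes nothing since $\Z/2\Z$ is \'etale in characteristic $p>2$, and $\Omega_E$ is trivial on an elliptic curve. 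Using the distinguished triangle $f^*\Omega_\mathcal{X}\to \Omega_\mathcal{C}(\log\Sigma)\to \LL_{\mathcal{C}/\mathcal{X}}[1]$ (with $\Sigma=\Sigma_{\mathrm{br}}\cup\Sigma_{\mathrm{ur}}$ the marked divisor equipped with log structure to hold the marked points fixed), $\LL_{\mathcal{C}/\mathcal{X}}[1]$ is identified with the two-term complex $[\mathcal{O}_\mathcal{C}\xrightarrow{df}\omega_\mathcal{C}(\Sigma)]$.

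On the open locus where $\mathcal{C}\cong E$ is smooth, $df$ is the natural inclusion $\mathcal{O}_E\hookrightarrow \mathcal{O}_E(\Sigma)$, and this complex is quasi-isomorphic to the length-$m$ skyscraper sheaf $\mathcal{O}_\Sigma$ concentrated in degree $0$, where $m=2g-2+n$. A direct computation then yields $\Ext^1(\mathcal{O}_\Sigma,\mathcal{O}_\mathcal{C})\cong k^m$ (one tangent direction per marked point, corresponding to translating it along $E$) and $\Ext^2(\mathcal{O}_\Sigma,\mathcal{O}_\mathcal{C})=0$ since $\mathcal{C}$ is a curve. This establishes the theorem on the dense open $M_{E,g;n}\subseteq \mt_{E,g;n}$, with the $m$-dimensional tangent space matching the heuristic that $M_{E,g;n}$ is essentially the configuration space of the $m$ marked points on $E$.

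The main obstacle is the nodal boundary, where $df$ vanishes identically on rational bubble components so $\LL_{\mathcal{C}/\mathcal{X}}[1]$ is no longer concentrated in degree $0$. The plan is to compute the hypercohomology globally via the normalization sequence of $\mathcal{C}$: on each rational bubble the local contribution to $\Ext^2$ vanishes by $H^1(\mathbb{P}^1,\mathcal{O})=0$; on the $E$-component the computation reduces to the smooth case above; and the balanced condition at each twisted node (Definition \ref{definition:twisted_curve}(6)) guarantees that the local contributions from the two sides match compatibly, so each node contributes exactly one smoothing parameter to the tangent count. Assembling these local pieces should yield $\dim\Ext^1=m$ and $\Ext^2=0$ globally, proving smoothness and the correct dimension.
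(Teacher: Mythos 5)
Your high-level strategy matches the paper's: invoke the deformation--obstruction theory of \cite[\S5.3]{abramovich2002compactifying}, work with the relative cotangent complex $\LL_{\cC/E\times B\Z/2\Z}$, and exploit that $\Omega_E$ is trivial. Your smooth-locus computation, modulo some looseness about degree conventions and the fact that $\cC$ is not literally $E$ but a stacky curve with $\Z/2\Z$-gerbes at the branch divisor, lands on the right answer. However, the proposal has a genuine gap exactly at the point you yourself flag as the main obstacle: the nodal boundary.

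First, a technical caveat. You propose to fold the marked points into a logarithmic cotangent sheaf $\Omega_{\cC}(\log\Sigma)$ and read off tangent and obstruction spaces as $\Ext^1$ and $\Ext^2$ of a single object. But \cite[Lemma 5.3.2]{abramovich2002compactifying}, which you cite, does \emph{not} use a log cotangent complex; it uses the plain relative cotangent complex together with a separate $H^0(\mathcal{N}_\Sigma)$ term. Switching to a log formulation is a reasonable reformulation, but you would need to justify that it produces the same deformation theory — in particular, you would have to account for the fact that at the stacky (branch) marked points the normal sheaf contribution vanishes ($H^0(\mathcal{N}_{\sigma_i})=0$ because the $\Z/2\Z$-stabilizer acts nontrivially), which is exactly what rebalances the count. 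As written, the equivalence of the two formulations is assumed rather than shown.

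The more serious gap is the boundary argument. You write that over rational bubbles ``the local contribution to $\Ext^2$ vanishes by $H^1(\bP^1,\cO)=0$,'' that on the $E$-component ``the computation reduces to the smooth case,'' and that the balanced condition at the nodes makes the local pieces ``match compatibly.'' None of this is actually carried out, and it does not obviously suffice. The vanishing of the obstruction group on a nodal twisted curve is not a purely component-wise statement: one needs to control how $\Ext^1(\Omega_\cC,\cO_\cC)$ maps onto $\Ext^1(f^*\Omega_E,\cO_\cC)$. The paper does this with a concrete, global argument: after applying Serre duality for the stacky nodal curve, the surjectivity is equivalent to the injectivity of $H^0(\cC,f^*\Omega_E\otimes\omega_\cC)\to H^0(\cC,\Omega_\cC\otimes\omega_\cC)$, which is proved by noting that (i) $\omega_\cC$ is the sheaf of $1$-forms with logarithmic poles at the nodes and opposite residues, so any nonzero global section of $f^*\Omega_E\otimes\omega_\cC$ is nonzero on every component, and (ii) on the component identified with $E$ the map $f^*\Omega_E\to\Omega_\cC$ is generically an isomorphism. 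This residue/dualizing-sheaf argument is the real content of the obstruction vanishing, and your sketch contains no substitute for it. Similarly, $\Ext^2(\Omega_\cC,\cO_\cC)=0$ is handled in the paper by Serre duality (it is dual to an $H^{-1}$ group), not by a normalization/Mayer--Vietoris argument of the kind you propose.

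In short: the plan aligns with the paper in spirit, but the proposal stops precisely where the proof becomes nontrivial. To close the gap you would need either to carry out the normalization argument carefully — including a treatment of the connecting maps across nodes of the twisted curve — or, more efficiently, to adopt the paper's route via the distinguished triangle $f^*\Omega_E\to\Omega_\cC\to\LL_{\cC/E}$, Serre duality for the stacky nodal curve, and the injectivity of the map $\psi$ on $H^0$.
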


\begin{proof}
\noindent\textbf{Obstructions:} First, we show that $\overline{M}_{E,g;n}$ is smooth by showing the obstruction groups vanish. Let $A^{\prime} \to A$ be a small extension of local Artinian $k$-algebras with $I \coloneqq \mathrm{ker}(A^{\prime} \to A)$, and let $(\mathcal{C} \to \Spec (A),\Sigma_{\br} \cup \Sigma_{\ub},f: \mathcal{C} \to E \times B\mathbb{Z}/2\mathbb{Z})$ be an object of $\overline{M}_{E,g;n}(A)$. Moreover, define $\mathcal{C}_0 \coloneqq \mathcal{C} \times_{\Spec (A)} \Spec (k)$. By \cite[Lemma 5.3.3]{abramovich2002compactifying}, the obstruction to lifting this object to $A^{\prime}$ lies in $\Ext_{\mathcal{O}_C}^2(\mathbb{L}_{\mathcal{C}/E \times B\mathbb{Z}/2\mathbb{Z}},\mathcal{O}_{\mathcal{C}_0}) \otimes_k I \cong \Ext_{\mathcal{O}_{C_0}}^2(\mathbb{L}_{\mathcal{C}_0/E \times B\mathbb{Z}/2\mathbb{Z}},\mathcal{O}_{\mathcal{C}_0}) \otimes_k I$. For brevity, we will write $\Ext$ to mean $\Ext_{\mathcal{O}_{\mathcal{C}_0}}$ for the rest of this proof.

We claim that $\Ext^2(\mathbb{L}_{\mathcal{C}_0/E \times B\mathbb{Z}/2\Z},\mathcal{O}_{\mathcal{C}_0}) \cong 0$. Because the projection $E \times B\mathbb{Z}/2\Z \to E$ is \'{e}tale, we have $\mathbb{L}_{\mathcal{C}_0/E \times B\mathbb{Z}/2\Z} \cong \mathbb{L}_{\mathcal{C}_0/E}$. By an abuse of notation, we will use $f$ to denote the map $\mathcal{C}_0 \to E$. There is a distinguished triangle of $\mathcal{O}_{\mathcal{C}_0}$-modules: $$f^*\mathbb{L}_E \to \mathbb{L}_{\mathcal{C}_0} \to \mathbb{L}_{\mathcal{C}_0/E}.$$ Because $E$ is smooth, we have $\mathbb{L}_E \simeq \Omega_E$. Because $\mathcal{C}_0$ is nodal, a local computation shows that $\mathbb{L}_{\mathcal{C}_0}$ simplifies to $\Omega_{\mathcal{C}_0}$. Thus, we can simplify the first two terms to get $$f^*\Omega_E \to \Omega_{\mathcal{C}_0} \to \mathbb{L}_{\mathcal{C}_0/E}.$$ This distinguished triangle gives us an exact sequence $$\Ext^1(\Omega_{\mathcal{C}_0},\mathcal{O}_{\mathcal{C}_0}) \to \Ext^1(f^*\Omega_E,\mathcal{O}_{\mathcal{C}_0}) \to \Ext^2(\mathbb{L}_{\mathcal{C}_0/E},\mathcal{O}_{\mathcal{C}_0}) \to \Ext^2(\Omega_{\mathcal{C}_0},\mathcal{O}_{\mathcal{C}_0}),$$ so it suffices to show that $\Ext^2(\Omega_{\mathcal{C}_0},\mathcal{O}_{\mathcal{C}_0}) \cong 0$ and that $\Ext^1(\Omega_{\mathcal{C}_0},\mathcal{O}_{\mathcal{C}_0}) \to \Ext^1(f^*\Omega_E,\mathcal{O}_{\mathcal{C}_0})$ is surjective:
\begin{itemize}
\item $\Ext^2(\Omega_{\mathcal{C}_0},\mathcal{O}_{\mathcal{C}_0}) \cong 0$: This is shown in \cite[\S3.0.3]{abramovich2003twisted}. Alternatively, this follows from Serre duality \cite[Corollary 2.30]{nironi2008grothendieck} for the stacky curve $\mathcal{C}_0$ because the $\Ext^2$ is dual to an $H^{-1}$ group, which is necessarily zero.

\item $\Ext^1(\Omega_{\mathcal{C}_0},\mathcal{O}_{\mathcal{C}_0}) \to \Ext^1(f^*\Omega_E,\mathcal{O}_{\mathcal{C}_0})$ is surjective: Let $\omega_{\mathcal{C}_0}$ be the dualizing sheaf of $\mathcal{C}_0$ in the sense of \cite{nironi2008grothendieck}. By Serre duality \cite[Corollary 2.30]{nironi2008grothendieck}, the map of $\Ext$ groups is identified with the map $H^0(\mathcal{C}_0,\Omega_{\mathcal{C}_0} \otimes \omega_{\mathcal{C}_0})^{\vee} \to H^0(\mathcal{C}_0,f^*\Omega_E \otimes \omega_{\mathcal{C}_0})^{\vee}$. 

 To prove the surjectivity of the map in the statement, it suffices to prove that the map $$\psi: H^0(\mathcal{C}_0,f^*\Omega_E \otimes \omega_{\mathcal{C}_0}) \to H^0(\mathcal{C}_0,\Omega_{\mathcal{C}_0} \otimes \omega_{\mathcal{C}_0})$$ is injective. Let $s \in H^0(\mathcal{C}_0,f^*\Omega_E \otimes \omega_{\mathcal{C}_0})$ be a nonzero section. Because the dualizing sheaf is \'{e}tale local \cite[Lemma 2.12]{nironi2008grothendieck}, we can identify $\omega_{\mathcal{C}_0}$ with the sheaf of 1-forms with logarithmic singularities whose residues at both components of each node add to 0. In particular, any section of $f^*\Omega_E \otimes \omega_{\mathcal{C}_0}$ whose restriction to some component is nonzero must be nonzero on each component. This means that $s$ must be nonzero when restricted to the component of $\mathcal{C}_0$ whose coarse space is identified with $E$. On this component, the map $f^*\Omega_E \to \Omega_{\mathcal{C}_0}$ is generically an isomorphism and hence injective, which means that the image $\psi(s)$ is nonzero. We conclude that $\psi$ is injective and that $\Ext^1(\Omega_{\mathcal{C}_0},\mathcal{O}_{\mathcal{C}_0}) \to \Ext^1(f^*\Omega_E,\mathcal{O}_{\mathcal{C}_0})$ is surjective.
\end{itemize}
Having shown that $\Ext^2(\mathbb{L}_{\mathcal{C}_0/E \times B\mathbb{Z}/2\Z},\mathcal{O}_{\mathcal{C}_0}) \cong 0$, we deduce that $\overline{M}_{E,g;n}$ is smooth. \\

\noindent\textbf{Deformations:} 
We compute the dimension of $\mb_{E,g;n}$ by computing the dimension of tangent space at every point. Let $(\mathcal{C} \to \Spec (k), \Sigma, f: \mathcal{C} \to E \times B\Z/2\Z)$ be a object of $\mb_{E,g;n}(k)$. Then by \cite[Lemma 5.3.2]{abramovich2002compactifying}, the tangent space $T$ of $\mb_{E,g;n}$ at the point representing this object sits in the following exact sequence of vector spaces:
\[0 \to \Hom(\mathbb{L}_{\mathcal{C}/E \times B\Z/2}, \OO_\mathcal{C}) \to H^0(\mathcal{C}, \mathcal{N}_{\Sigma}) \to T \to \Ext^1(\mathbb{L}_{\mathcal{C}/E \times B\Z/2}, \OO_\mathcal{C})\to 0.\]
The exact sequence stated in \cite[Lemma 5.3.2]{abramovich2002compactifying} actually only gives us the above sequence from the second term $\Hom(\mathbb{L}_{\mathcal{C}/E \times B\Z/2}, \OO_\mathcal{C})$ onwards. However, as in \cite[\S3.0.4]{abramovich2003twisted}, we extend this exact sequence so that the first term is the infinitesimal automorphism group of the data $(\mathcal{C} \to \Spec (k), \Sigma, f: \mathcal{C} \to E \times B\Z/2\Z)$. By stability, this infinitesimal automorphism group vanishes, so we get the above exact sequence starting with 0.

First notice that $\mathcal{N}_{\Sigma}=\oplus_i \mathcal{N}_{\Sigma_i}$, where $\mathcal{N}_{\Sigma_i}$ is the normal sheaf at the $i^{th}$ marked point. By \cite[3.0.4]{abramovich2003twisted}, at each $\sigma_i$ where the marked is a branching point, then $H^0(\CC, \mathcal{N}_{\Sigma_i})=0$. At each $\tau_j$ where the marked point is not a branching point, we have $H^0(\CC, \mathcal{N}_{\Sigma_j}) \cong k$. Therefore, we obtain $H^0(\CC, \mathcal{N}_{\Sigma}) \cong k^{n}$. 

Next, since $E \times B\Z/2\Z \to E$ is \'{e}tale, we have $\LL_{\CC/E \times B\Z/2}\cong \LL_{\CC/E}$. We also have the distinguished triangle of $\mathcal{O}_{\mathcal{C}}$-modules:
\[f^*\Omega_E \to \Omega_{\CC} \to \LL_{\CC/E}.\]
This gives the long exact sequence
\begin{equation}
\begin{split}
&0 \to \Hom(\mathbb{L}_{\mathcal{C}/E},\mathcal{O}_{\mathcal{C}}) \to \Hom(\Omega_{\mathcal{C}},\mathcal{O}_{\mathcal{C}}) \to \Hom(f^*\Omega_E,\mathcal{O}_{\mathcal{C}}) \\ &\quad \to \Ext^1(\mathbb{L}_{\mathcal{C}/E},\mathcal{O}_{\mathcal{C}}) \to \Ext^1(\Omega_{\mathcal{C}},\mathcal{O}_{\mathcal{C}}) \to \Ext^1(f^*\Omega_E,\mathcal{O}_{\mathcal{C}}) \to 0.
\end{split}
\label{exseq:HomExt}
\end{equation}
As above, we use $\Ext^2(\LL_{\CC/E}, \OO_C)=0$. Hence, the dimension of $T$ is
\begin{equation*}
    \begin{split}
       \dim(T)&=n-\dim(\Hom(\LL_{\CC/E}, \OO_C)) +\dim(\Ext^1(\LL_{\CC/E}, \OO_C)) \\
       &=n-\dim(\Hom(\Omega_{\mathcal{C}},\mathcal{O}_{\mathcal{C}}))+\dim(\Ext^1(\Omega_{\mathcal{C}},\mathcal{O}_{\mathcal{C}}))+\dim(\Hom(f^*\Omega_E,\mathcal{O}_{\mathcal{C}}))-\dim(\Ext^1(f^*\Omega_E,\mathcal{O}_{\mathcal{C}})),
    \end{split}
\end{equation*}
by the exact sequence \eqref{exseq:HomExt}. Since $\Omega_E$ is trivial and $C$ has genus $1$, we observe
\begin{align*} 
\Hom(f^*\Omega_E, \OO_\CC) &\cong H^0(\CC, \OO_\CC)=H^0(C, \OO_C) \cong k\\
\Ext^1(f^*\Omega_E, \OO_\CC) &\cong H^1(\CC, \OO_\CC)=H^1(C, \OO_C) \cong k.
\end{align*}
By \cite[\S3.0.4]{abramovich2003twisted}, we have
$\Ext^i(\Omega_\CC, \OO_\CC)
=H^{1-i}(\CC, \Omega_{\CC}\otimes \omega_\CC)^{\smvee}
=H^{1-i}(C, \pi_*(\Omega_{\CC}\otimes \omega_\CC))^{\smvee}$, where $\pi: \mathcal{C} \to C$ is the coarse moduli space map.
Therefore, we have $$\dim(\Hom(\Omega_{\mathcal{C}},\mathcal{O}_{\mathcal{C}}))-\dim(\Ext^1(\Omega_{\mathcal{C}},\mathcal{O}_{\mathcal{C}}))=-\chi(\pi_*(\Omega_\CC \otimes \omega_\CC)).$$
Since all of our nodes are balanced, we have $\pi_*(\Omega_\CC \otimes \omega_\CC)=\Omega_C \otimes \omega_C(\Sigma_{\rm br})$ as in \cite[\S3.0.4]{abramovich2003twisted}, where $\Sigma_{\rm br}$ is summing over all the smooth branched markings $\sigma_i$.
Therefore, $$\chi(\Omega_C \otimes \omega_C(\Sigma_{\rm br}))=3g_{C}-3+(2g-2)=2g-2.$$ Hence, $\dim(T)=n+2g-2$, as desired. 
\end{proof}

\begin{remark}
    One might wonder to what extent it is possible to generalize this theorem, by replacing $E$ by an arbitrary curve or replacing $\Z/2\Z$ by an arbitrary cyclic group. In the more general setting, the deformation theory and dimension computation should still go through, giving us smooth compactifications of the relevant moduli spaces. However, for our inductive strategy of determining the dimensions of $p$-rank strata, it is harder to assess the boundary components.
\end{remark}

\begin{lemma}\label{Lem: Meg is irr}
For $g\geq 2$, $M_{E,g}$ is irreducible. Therefore, $\mb_{E,g}$ is irreducible.
\end{lemma}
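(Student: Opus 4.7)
The plan is to realize $M_{E,g}$ as an open substack of a projective bundle over the irreducible base $\Pic^{g-1}(E) \cong E$, and to deduce irreducibility from this. First, given a smooth bielliptic cover $\pi : D \to E$ of genus $g$, the bielliptic involution decomposes $\pi_{*}\OO_{D} \cong \OO_{E} \oplus L^{-1}$ for a unique $L \in \Pic^{g-1}(E)$ (the degree is forced by Riemann--Hurwitz), and the $\OO_{E}$-algebra structure is encoded by a global section $s \in H^{0}(E, L^{\otimes 2})$ whose zero divisor is the branch locus. Sending $(\pi : D \to E)$ to $L$ then defines a morphism $\varphi : M_{E,g} \to \Pic^{g-1}(E)$.

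Next, I would realize the source of $\varphi$ as an open substack of a projective bundle over $\Pic^{g-1}(E)$. Let $\cL$ denote a (relative degree $g-1$) Poincar\'e line bundle on $E \times \Pic^{g-1}(E)$ and $p$ the projection to $\Pic^{g-1}(E)$. Since $\deg \cL^{\otimes 2} = 2g-2 > 0$ on each fiber, Riemann--Roch together with cohomology and base change shows that $\mathcal{V} \coloneqq p_{*}\cL^{\otimes 2}$ is a vector bundle of rank $2g-2$. Up to a natural $B\mu_2$-gerbe structure coming from the bielliptic involution (which arises because the automorphism $\lambda \cdot \mr{id}_{L}$ of $L$ rescales $s$ by $\lambda^{2}$, so the scheme-theoretic automorphism group of $(L,s)$ is $\mu_{2}$), the stack $M_{E,g}$ is identified with the open substack $U \subseteq \mathbb{P}(\mathcal{V})$ parametrizing pairs $(L,[s])$ for which $s$ has $2g-2$ distinct simple zeros. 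Surjectivity of $\varphi$ follows from the fact that, for each $L \in \Pic^{g-1}(E)$, sections of $L^{\otimes 2}$ with simple zeros form a nonempty dense open subset of $H^{0}(E, L^{\otimes 2})$, since $\deg L^{\otimes 2} = 2g-2 \geq 2$.

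Irreducibility of $M_{E,g}$ then follows formally: $\mathbb{P}(\mathcal{V})$ is a $\mathbb{P}^{2g-3}$-bundle over the irreducible curve $\Pic^{g-1}(E) \cong E$ and so is irreducible, its nonempty open substack $U$ is irreducible, and a $B\mu_{2}$-gerbe does not change the underlying topology. This also recovers the expected dimension $1 + (2g-3) = 2g-2$, matching Theorem~\ref{theorem:meg_smooth}. For the second assertion, $M_{E,g}$ is open and dense in $\mb_{E,g}$ by Corollary~\ref{cor: smooth locus is dense}, so $\mb_{E,g}$ inherits irreducibility.

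The main obstacle will be making the identification between $M_{E,g}$ and the open substack $U \subseteq \mathbb{P}(\mathcal{V})$ precise at the stack level rather than merely on coarse spaces, while carefully tracking the $\mu_{2}$-automorphism group contributed by the bielliptic involution. This is a routine cyclic-cover construction in families, and in any case the gerbe structure is irrelevant for the irreducibility conclusion.
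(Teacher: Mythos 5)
Your proposal is correct and follows essentially the same route as the paper's proof: the paper also passes through the Poincar\'e bundle $\mathcal{L}$ on $E \times \Pic^{g-1}(E)$, forms $\mathcal{V} = \pi_*\mathcal{L}^{\otimes 2}$, and uses the square-root construction of double covers from sections with reduced zero divisor. The one cosmetic difference is that the paper works with the open locus $X$ inside the \emph{total space} of $\mathcal{V}$ and only proves that the resulting map $X \to M_{E,g}$ is \emph{surjective} (which already suffices for irreducibility, since $X$ is manifestly irreducible), whereas you pass to the projectivization $\mathbb{P}(\mathcal{V})$ and claim a precise identification of $M_{E,g}$ with a $B\mu_2$-gerbe over an open $U \subseteq \mathbb{P}(\mathcal{V})$; the paper's route is slightly more economical because surjectivity is all that is needed and one never has to nail down the gerbe structure.
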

\begin{proof}
To show that $M_{E,g}$ is irreducible, it suffices to exhibit a surjective map from an irreducible scheme $X$.
\begin{itemize}
    \item The construction of $X$: Let $\mathcal{L}$ be the universal bundle on $E \times \Pic^{g - 1}(E)$, and consider the projection $\pi: E \times \Pic^{g - 1}(E) \to \Pic^{g - 1}(E)$. Let $\mathcal{V} \coloneqq \pi_*\mathcal{L}^{\otimes 2}$. Because $2g - 2 > 0$ and $H^1(E,\mathcal{M}) \cong 0$ for any positive degree line bundle $\mathcal{M}$, by Serre duality, cohomology and base change tells us that $\mathcal{V}$ is a locally constant sheaf on $\Pic^{g - 1}(E)$. 
    
    By abuse of notation, let $\mathcal{V}$ also denote the space of global sections of $\mathcal{V}$ (as we will do throughout this proof). We let $X \subset \mathcal{V}$ be the open subset consisting of sections whose vanishing loci are reduced. In other words, $X$ consists of pairs $(\mathcal{M},r)$, where $\mathcal{M}$ is a line bundle on $E$ of degree $g - 1$ and the zeroes of $r \in H^0(E,\mathcal{M}^{\otimes 2})$ have multiplicity 1. Because $\Pic^{g - 1}(E)$ is irreducible, $X$ is irreducible.

\item The map $\phi: X \to M_{E,g}$: By definition,  the map $\phi$ can be defined by specifying a stable map $(\mathcal{C} \to X,\Sigma,f: \mathcal{C} \to E \times B\Z/2\Z)$ such that $f$ is degree 1, all the automorphism groups of $\mathcal{C}$ above $\Sigma$ have order 2, and $\mathcal{C} \to X$ is smooth. There exists a universal section $s: E \times X \to \left(\mathcal{L}|_{E \times X}\right)^{\otimes 2}$, where $\mathcal{L}|_{E \times X}$ is the pullback of $\mathcal{L}$ along the map $E \times X \to E \times \Pic^{g - 1}(E)$. We define $D \subset \mathcal{L}|_{E \times X}$ as the preimage of $s(E \times X) \subset \left(\mathcal{L}|_{E \times X}\right)^{\otimes 2}$ along the squaring map $\mathcal{L}|_{E \times X} \to \left(\mathcal{L}|_{E \times X}\right)^{\otimes 2}$. We then let $\mathcal{C} \coloneqq [D/(\mathbb{Z}/2\Z)]$, where $\mathbb{Z}/2\Z$ acts on $D \subset \mathcal{L}|_{E \times X}$ by multiplication by $\pm 1$. The projection $D \to E \times X$ exhibits $E \times X$ as the coarse quotient $C = D/(\mathbb{Z}/2\Z)$. Let $\Sigma \subset E \times X$ be the branch divisor of the double cover $D \to C$, which is the same as the vanishing locus of the universal section $s$. Because of the requirement that sections $r$ corresponding to points $(\mathcal{M},r) \in X$ have reduced vanishing loci, $\Sigma \to X$ is \'{e}tale. Checking that $(\mathcal{C} \to C \to X,\Sigma)$ is a smooth twisted $(g - 1)$-pointed curve can be done using the description of $\mathcal{C}$ as the quotient $[D/(\mathbb{Z}/2\Z)]$ and the fact that $\Sigma \to X$ is \'{e}tale, as these facts ensure that we have the local description we want for Definition \ref{definition:twisted_curve}.

We define $f: \mathcal{C} \to E \times B\mathbb{Z}/2\Z$ by letting $\mathcal{C} \to E$ be the projection $\mathcal{C} \to C = E \times X \to E$ and letting $\mathcal{C} \to B\mathbb{Z}/2\Z$ classify the \'{e}tale double cover $D \to \mathcal{C}$. The fact that $f$ is representable follows from the fact that $D$ is a scheme. The map on coarse spaces $C \to E$ is just the projection $E \times X \to E$, so it is a stable map of degree 1. The last thing we have to check is that the points in $\mathcal{C}$ above $\Sigma$ have stabilizer $\mathbb{Z}/2\Z$. This is true because the preimage of $\Sigma$ in $D$ is precisely the intersection of $D$ with the zero section of $\mathcal{L}|_{E \times X}$, which is stabilized by the $\mathbb{Z}/2\Z$-action on $D$. We thus have a twisted stable map $(\mathcal{C} \to X,\Sigma,f: \mathcal{C} \to E \times B\mathbb{Z}/2\Z) \in M_{E,g}$, defining a map $\phi: X \to M_{E,g}$.

\item $\phi$ is surjective:
This follows from the well-known fact that double covers of a curve can all be constructed as the square root of a section of a line bundle as above, which we will explain briefly below. In the following, we reset the notation we used earlier in the proof.
Let $(\mathcal{C} \to \Spec (k),\Sigma,f: \mathcal{C} \to E \times B\mathbb{Z}/2\Z)$ be any point of $M_{E,g}$. Then $\mathcal{C}$ has coarse space identified with $E$ by $f$, and $\mathcal{C} = [D/(\mathbb{Z}/2\Z)]$, where $D \to \mathcal{C} \to E$ is the double cover classified by the map $\mathcal{C} \to B\mathbb{Z}/2\Z$. We want to show that $h: D \to E$ can be produced using the square root construction above.
Let $\mathcal{M} \coloneqq (h_*\mathcal{O}_D/\mathcal{O}_E)^{\vee}$. Because 2 is invertible, the trace map provides a splitting $h_*\mathcal{O}_D \to \mathcal{O}_E$ of the exact sequence $0 \to \mathcal{O}_E \to h_*\mathcal{O}_D \to \mathcal{M}^{\vee} \to 0$, so that we get a section $\mathcal{M}^{\vee} \to h_*\mathcal{O}_D$. This extends to a surjection of $\mathcal{O}_E$-algebras $\Sym_{\mathcal{O}_E}^{\bullet}(\mathcal{M}^{\vee}) \to h_*\mathcal{O}_D$, which provides a closed embedding $D \xhookrightarrow{} \mathcal{M}$.
The ideal $\ker\left(\Sym_{\mathcal{O}_E}^{\bullet}(\mathcal{M}^{\vee}) \to h_*\mathcal{O}_D\right)$ is preserved by the $\mathbb{Z}/2\Z$-action acting by $-1$ on $\mathcal{M}^{\vee}$, so $D$ is preserved by the $\mathbb{Z}/2\Z$-action. This means that its image under the squaring map $\mathcal{M} \to \mathcal{M}^{\otimes 2}$ maps by degree 1 to $E$ and is thus isomorphic to $E$.
Thus, we have a section $r: E \to \mathcal{M}^{\otimes 2}$ whose square root gives us the double cover $D \to E$. Because $D$ is smooth, the vanishing locus of $r$ must be reduced, and we know that a point $(\mathcal{M},r) \in X$ maps to our point $(\mathcal{C} \to \Spec (k),\Sigma,f: \mathcal{C} \to E \times B\mathbb{Z}/2\Z)$ of $M_{E,g}$. We conclude that $h$ is surjective and that $M_{E,g}$ is irreducible.
\end{itemize}
\end{proof}

\begin{corollary}\label{corollary:megn_irred}
    $\mb_{E,g;n}$ is connected. Consequently, $\mb_{E,g;n}$ is irreducible. 
\end{corollary}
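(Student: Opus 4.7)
The plan is to establish connectedness of $\mb_{E,g;n}$ for $g \geq 2$ by induction on $n$, and then to upgrade this to irreducibility using Theorem~\ref{theorem:meg_smooth}: the connected components and irreducible components of a smooth Deligne--Mumford stack coincide, so a smooth connected stack is irreducible. The base case $n = 0$ is exactly Lemma~\ref{Lem: Meg is irr}. For the inductive step with $n \geq 1$, I would construct a forgetful morphism
\[
\varphi \colon \mb_{E,g;n} \longrightarrow \mb_{E,g;n-1}
\]
that drops one of the unordered unramified marked points and performs the standard stabilization (contracting components of the base curve $C$, and correspondingly of $D$, that become unstable), in direct analogy with the universal curve morphism $\overline{\mathcal{M}}_{g,n+1} \to \overline{\mathcal{M}}_{g,n}$.

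The morphism $\varphi$ is proper, since both source and target are proper by Proposition~\ref{proposition:meg_proper_dm}, and it is surjective: given any $(D \to C, \Sigma_{\br}, \Sigma_{\ur}^{n-1}) \in \mb_{E,g;n-1}$, one produces a preimage by choosing any point in the smooth locus of $C$ disjoint from the existing $2g - 3 + n$ special points. The key step is to show that the fibers of $\varphi$ are connected. Over a point $(D \to C, \Sigma_{\br}, \Sigma_{\ur}^{n-1})$, the fiber parametrizes the location of the new unramified marked point on $C$ together with its admissible preimage on $D$, where collisions with existing special points are resolved by the standard bubbling procedure that inserts a new $\Po$-component in $C$ and the corresponding pair of unramified points in $D$. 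This identifies the fiber with a stacky modification of the universal target curve, which is the connected nodal genus-$1$ curve $C$ itself, so the fiber is connected. A proper surjective morphism with connected fibers onto a connected space has connected source, so this inductive step gives connectedness of $\mb_{E,g;n}$, which then yields irreducibility via smoothness.

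The main obstacle is making the fiber identification fully precise in the twisted-stable-maps setting of Section~\ref{section:stable_maps_dm}: one must verify that moving the new unramified point across a node of $C$, which amounts to passing through a bubbled limit in the moduli stack, indeed glues the smooth loci of the different irreducible components of $C$ together, so that the fiber is connected rather than a disjoint union indexed by these components. This should follow formally from the universal property of the moduli of balanced twisted stable maps and the deformation theory developed in \cite{abramovich2002compactifying, abramovich2003twisted}, but this verification is where most of the work lies.
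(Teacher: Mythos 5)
Your route is genuinely different from the paper's, and it leaves a real gap that you yourself flag but do not close. The paper does not induct on $n$ nor touch the compactification directly; instead it considers the forgetful morphism $\pi\colon M_{E,g;n}\to M_{E,g}$ dropping \emph{all} $n$ unramified marked points at once. Its fiber over a smooth cover with ramification locus $R$ is explicitly $\bigl((E-R)^n-\Delta_{\weak}\bigr)/S_n$, an open subvariety of $\Sym^n E$, which is visibly connected; since $M_{E,g}$ is irreducible by Lemma~\ref{Lem: Meg is irr} and the total space is smooth, $M_{E,g;n}$ is irreducible, and a dimension count (smoothness of $\mb_{E,g;n}$ forces every component to have dimension $2g-2+n$, while the boundary strata have lower dimension) shows $M_{E,g;n}$ is dense in $\mb_{E,g;n}$. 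No stabilization or bubbling analysis is needed at any point.

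Your proposal has two concrete problems. First, the map $\varphi\colon\mb_{E,g;n}\to\mb_{E,g;n-1}$ ``dropping one of the unordered unramified marked points'' does not literally exist: a $k$-point of $\mb_{E,g;n}$ carries an \emph{unordered} reduced divisor $\Sigma_{\ur}$, so there is no distinguished point to drop. One can repair this by working on the ordered space $\mt_{E,g;n}$ (where forgetting the last $\tau_j$ is well-defined and $S_{2g-2}\times S_{n-1}$-equivariant), descending to $\mt_{E,g;n}/(S_{2g-2}\times S_{n-1})$, and then invoking the finite surjection onto $\mb_{E,g;n}$, but this must be spelled out. Second, and more seriously, your argument rests on identifying the fibers of $\varphi$ over the compactification with (modifications of) the universal target curve, so that passing through bubbled limits glues the smooth loci of the various components of $C$ into one connected fiber. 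You explicitly say ``this verification is where most of the work lies,'' and indeed it is not carried out; as written, the proposal reduces the corollary to an unproved claim about the universal-curve structure of $\varphi$ in the twisted-stable-maps setting. The paper's argument buys simplicity precisely by never needing this: it only ever computes fibers over the smooth open locus, where they are transparent configuration spaces, and handles the boundary by a one-line density-via-dimension observation.
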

\begin{proof}

We show that $M_{E,g;n}$ is connected. Since it is smooth, this implies that it is irreducible.

We consider the forgetful morphism $\pi: M_{E,g;n} \to M_{E,g}$. The fiber is the configuration space, which is connected. If we let $\Delta$ denote the weak diagonal in $E^n$, and let $R$ be the branch locus of the fixed double cover, then the fiber over the double cover is $((E-R)^n -\Delta)/S_n$, which is connected.
Also, the image of $\pi$ is $M_{E,g}$, which is connected. Therefore, $M_{E,g;n}$ is connected, hence irreducible. By dimension, since $M_{E,g;n}$ is dense in $\mb_{E,g;n}$, it follows that $\mb_{E,g;n}$ is connected, hence irreducible. 
\end{proof}

\begin{proposition}\label{forgetful map is quasi-finite - appendix}
    We have the following morphism: 
    \[\mb_{E,g}^{\red} \xrightarrow{\varphi} \Mb_{g,2g-2} \xrightarrow{\pi} \Mb_{g}\]
    The first map $\varphi $ sends a cover $D \to C$ to its source curve $D$, with the $2g - 2$ ramification points as marked points.  The second map $\pi$ forgets the marked points and stabilizes the curve. Then $\pi \circ \varphi$ is quasi-finite on the smooth locus of $\mbr_{E,g}$ and generically finite on each boundary component of~$\mbr_{E,g}$.

\end{proposition}
\begin{proof}
Observe that $\pi \circ \varphi$ factors through $[\mb_{E,g}/\Aut(E)]$, and we already know that the map from $\mb_{E,g}^{\red}$ to $[\mb_{E,g}/\Aut(E)]$ is finite. Therefore, it is enough to show the statement for the map from $[\mb_{E,g}/\Aut(E)]$ to $\Mb_{g}$. Hence, we want to show that the forgetful map has finite fibers on the smooth locus and generically on each boundary component. Let $g \geq 2$.

\begin{enumerate}
    \item We first show quasifiniteness on the smooth locus. We know that $[M_{E,g}/\Aut(E)]$ parametrizes double covers $D\xrightarrow{\eta} E$ such that $D$ has genus $g$. Moreover, $D_1\xrightarrow{\eta_1} E$ and $D_2\xrightarrow{\eta_2} E$ correspond to the same point of $[M_{E,g}/\Aut(E)]$ if there are isomorphisms $f_1:D_1\to D_2$ and $f_2:E\to E$ such that $\eta_1=f_2\circ \eta_2 \circ f_1$.

Consider a curve $D$ in the image under $\pi \circ \varphi$ of the smooth locus of $\mbr_{E,g}$, and let $\eta: D \to E$ be the data of one of the preimages. We will show that it has only finitely many preimages in $[M_{E,g}/\Aut(E)]$. First, note that each double cover $\eta:D\to E$ gives us an element $\sigma\in \Aut(D)$ which is of order $2$. If we let $\eta^*$ and $\sigma^*$ to be the induced map on the function field $K(E)$ and $K(D)$ respectively, then we have that $\eta^*(K(E))=K(D)^{\sigma^*}$. Since $g\geq 2$, we know that $\Aut(D)$ is a finite group. Therefore, there are finitely many choices for $\sigma$ and hence finitely many choices for $\eta^*(K(E))$.

Now, we show that $\eta^*(K(E))$ determines the preimage. Suppose, we have two preimages $D \xrightarrow{\eta_0} E$ and $D\xrightarrow{\eta_1} E$ that satisfy $\eta_0^*(K(E))=\eta_1^*(K(E))$. Then there is $\tau\in \Aut(K(E))$, for which $\eta_1^* \circ \tau =\eta_0^*$ (this $\tau$ is obtained from the composition $K(E)\xrightarrow{\eta_0^*} \eta_0^*(K(D)) \xrightarrow{{\eta_1^{*}}^{-1}} K(E)$, which is well defined since $\eta_0^*(K(E))=\eta_1^*(K(E))$). Next, there is $f\in \Aut(E)$ for which $f^*=\tau$, so $f \circ \eta_1=\eta_0$.  This shows that $D \xrightarrow{\eta_0} E$ and $D\xrightarrow{\eta_1} E$ correspond to the same point of $[M_{E,g}/\Aut(E)]$. We conclude that the forgetful map from $[M_{E,g}/\Aut(E)]$ to $\Mb_{g}$ is quasi-finite on the smooth locus.

\item Next, consider a boundary component, either $\Du_{g_1,g_2-1}$ or $\Dr_{g_1,g_2}$. Curves in the generic part of any boundary component are the clutching of two smooth curves, except for one component of $\Du_{1,g-2}$, when it is the clutching of three smooth curves.  
Consider a curve $D \in \Mb_g$ coming from a generic point of a boundary component. Let $D'\xrightarrow{\eta_0} C_0 \xrightarrow{f_0} E$ be a preimage of $D$, where $C_0$ is the clutching of $E$ with $\Po$ (say $P_0$ of $E$ with $Q_0$ of $\Po$), and~$D$ is the stabilization of $D'$ after forgetting the ramification points. Let $D_{a}=\eta_0^{-1}(E)$ and $D_{b}=\eta_0^{-1}(\Po)$, so $g(D_{a})=g_1$ and $g(D_{b})=g_2$.
Then $D'$ is the clutching of $D_a$ and $D_b$, when $\eta_0^{-1}(P_0)$ is identified with $\eta_0^{-1}(Q_0)$. Notice that if $g_2\geq 1$, then $D=D'$.
On the other hand, if $g_2=0$, then it must be the case that $(D' \to C_0) \in \Du_{g-1,0}$. In this case, $D$ is $D_a$ with two points in $\eta_0^{-1}(P_0)$ identified. 

As in the smooth case, the double covers $\eta_0: D_a\to E$ and $\eta_0:D_b\to \Po$ determine elements of order two, $\sigma_a\in \Aut(D_a)$ and $\sigma_b\in \Aut(D_b)$ such that $\eta_0^*(K(E))=K(D_a)^{\sigma_a^*}$ and $\eta_0^*(K(\Po))=K(D_b)^{\sigma_b^*}$. For different values of $(g_1,g_2)$, we determine the possibilities for $D'$ and the map $D' \to E$. 
\begin{enumerate}
    \item If $g_1,g_2\geq 2$. Then, since a curve of genus $\geq 2$ has a finite automorphism group, we have only finitely many options for $\sigma_a$ and $\sigma_b$. We show that $\sigma_a, \sigma_b$ uniquely determine the map $D' \to C_0 \to E$. 
    
    Let $D'\xrightarrow{\eta_1} C_1\xrightarrow{f_1} E$ be another preimage of $D$ under $\pi \circ \varphi$. Suppose that we have $\eta_0^*(K(E))=\eta_1^*(K(E))$ and $\eta_0^*(K(\Po))=\eta_1^*(K(\Po))$. Then, there exist $h_a\in \Aut(E)$ and $h_b\in \Aut(\Po)$ such that $\eta_1^{*} \circ h_a^*= \eta_0^*$ and $\eta_1^{*} \circ h_b^*= \eta_0^*$, so $h_a\circ \eta_1=\eta_0 |_{D_a}$ and $h_b\circ \eta_1=\eta_0 |_{D_b}$.
    Recall that $D'$ is the clutching of $D_a$ and $D_b$, when $\eta_0^{-1}(P_0)\subseteq D_a$ is identified with $\eta_0^{-1}(Q_0)\subseteq D_b$. Since $D' \xrightarrow{\eta_1} C_1 \xrightarrow{f_1}E$ is another preimage of $D$,  we see that $D$ is also the clutching of $D_a$ and $D_b$, when $\eta_1^{-1}(P_1)$ is identified with $\eta_1^{-1}(Q_1)$. This means that $\eta_0^{-1}(P_0)=\eta_1^{-1}(P_1)$ and $\eta_0^{-1}(Q_0)=\eta_1^{-1}(Q_1)$. We see that $h_a(P_1)=P_0$ and $h_b(Q_1)=Q_0$. Thus the map $h=h_a \cup h_b$ is well defined from $C_1$ to $C_0$, it is an isomorphism and satisfies $\eta_0=h\circ \eta_1$. 
    
    Similarly, note that the map $f_0:C_0\to E$ is the identity on $E$ and maps $\Po$ to $P_0$, similarly the map $f_1:C_1\to E$ is the identity on $E$ and maps $\Po$ to $P_1$. Therefore, we have the following commutative diagram, in which $h$ and $h_a$ are isomorphisms:
    \[\begin{tikzcd}
    \arrow[d,"\eta_0"] D'  &\arrow[l,"id"] D'\arrow[d,"\eta_1"]   \\
    C_0\arrow[d,"f_0"] &\arrow[l,"h"] C_1\arrow[d,"f_1"]\\
    E &\arrow[l,"h_a"] E
    \end{tikzcd}\]
    It follows that $D'\xrightarrow{\eta_0} C_0\xrightarrow{f_0} E$ and $D'\xrightarrow{\eta_1} C_1\xrightarrow{f_1} E$ correspond to the same point of $[\mb_{E,g}/\Aut(E)]$. Since $\sigma_a^*$ and $\sigma_b^*$ determine $\eta_0^*(K(E))$ and $\eta_0^*(K(\Po))$, we see that $D$ has only finitely many preimages. 

    \item If $g_1\geq 2$ and $g_2=1$. Then $(D' \to C_0)$ must be in $\Dr_{g-1,1}$ or $\Du_{g-2,1}$. For now we consider both cases simultaneously and we distinguish between the different cases at the end. Note that there are only finitely many options for $\sigma_a$.
    Moreover, $D_b$ is an elliptic curve, so its $j$-invariant is the $j$-invariant of the cross-ratio of the four branch points of $\eta_0:D_b\to\Po$. Thus there are at most six choices for this cross-ratio. We show that $\sigma_a$ and the cross ratio uniquely determine $D' \to C_0 \to E$. 

    Let $D'\xrightarrow{\eta_1} C_1\xrightarrow{f_1} E$ be another preimage of $D$, and suppose that $\eta_0^*(K(E))=\eta_1^*(K(E))$ and $\eta_0:D_b\to\Po$ and $\eta_1:D_b\to\Po$ have the same cross-ratio of branch points. Then, similar to case~(a), there is $h_a\in \Aut(E)$ such that $\eta_1^{*} \circ h_a^*= \eta_0^*$, so $h_a\circ \eta_1=\eta_0 |_{D_a}$.
    
    Suppose $\eta_0:D_b\to \Po$ is branched at $\{Q_0,R_{0,1},R_{0,2},R_{0,3}\}$ and $\eta_1:D_b\to \Po$ is branched at $\{Q_1,R_{1,1},R_{1,2},R_{1,3}\}$. Since they have the same cross-ratio, there is a linear fractional transformation $h_b:\Po\to\Po$ that sends $\{Q_1,R_{1,1},R_{1,2},R_{1,3}\}$ to $\{Q_0,R_{0,1},R_{0,2},R_{0,3}\}$. Now, $\eta_0$ and $h_b\circ \eta_1:D_b\to \Po$ have the same branch points, so $\eta_0=h_b\circ \eta_1$. The rest of the argument is identical to case~(a), which shows that $D'\xrightarrow{\eta_0} C_0\xrightarrow{f_0} E$ and $D'\xrightarrow{\eta_1} C_1\xrightarrow{f_1} E$ correspond to the same point of $[\mb_{E,g}/\Aut(E)]$.

    Now, when $D$ is in $\Delta_{g-1,1}$, then the point on $D_b$ that is clutched with $D_a$ is one of the four branch points $\{Q,R_{1}, R_{2}, R_{3} \}$, so there are finitely many possibilities. On the other hand, when $D$ is in $\Du_{g-2,1}$, the clutching point of $D_b$ is any other point. The position of this point relative to $\{Q,R_{1},R_{2},R_{3}\}= D_b[2]$ introduces another dimension to the moduli problem. Two choices of a clutching point give the same point in $\Mb_g$ if and only if there is an automorphism of $D_b$ that exchanges the two clutching points and preserves $D_b[2]$, so again this gives a finite number of preimages in $[\overline{M}_{E,g}/\Aut(E)]$. 
    
    \item If $g_1\geq 2$ and $g_2= 0$. Then, $(D' \to C_0)$ is in $\Du_{g_1,0}$, and $D$ is $D_a$ with two points in $\eta_0^{-1}(P_0)$ identified with each other. Moreover, $g(D_b)=0$, so $D_b=\Po$. Again $\sigma_a$ has only finitely many choices.
    
    Suppose that $D'\xrightarrow{\eta_1} C_1\xrightarrow{f_1} E$ is another preimage of $D$ for which $\eta_0^*(K(E))=\eta_1^*(K(E))$. Similar to case~(a), there is $h_a\in \Aut(E)$ for which $h_a\circ \eta_1=\eta_0 |_{D_a}$. This forces $h_a(P_1)=P_0$. On the other hand, both $\eta_0 |_{D_b}$ and $\eta_1 |_{D_b}$ are double covers of $\Po$ ramified at two points, so there is $h_b\in \Aut(\Po)$ such that $\eta_0=h_b\circ \eta_1$ and $h_b(Q_1)=Q_0$. The rest of the proof is the same as in case~(a) to show that $D'\xrightarrow{\eta_0} C_0\xrightarrow{f_0} E$ and $D'\xrightarrow{\eta_1} C_1\xrightarrow{f_1} E$ correspond to the same point of $[\mb_{E,g}/\Aut(E)]$.

    \item If $g_1=1$ and $g_2\geq 2$. Then $\sigma_b$ has only finitely many choices and $\eta_0:D_a\to E$ has only $4$ choices (it is in $[\mb_{E,1}/\Aut(E)]$).
    Suppose that $D'\xrightarrow{\eta_1} C_1\xrightarrow{f_1} E$ is another preimage for which $\eta_0^*(K(\Po))=\eta_1^*(K(\Po))$ and $\eta_0,\eta_1:D_a\to E$ correspond to same point of $[\mb_{E,1}/\Aut(E)]$.
    Thus, there is $h_a\in \Aut(E)$ and $h_b\in \Aut(\Po)$ such that $\eta_0=h_a\circ \eta_1: D_a\to E$ and $\eta_0=h_b\circ \eta_1: D_b\to \Po$. The rest of the proof is identical to case~(a).

    \item If $g_1=1$ and $g_2=1$, so $D' \to E$ is in $\Dr_{1,1}$. 
    This case is identical to case~(b), except that $\eta_0:D_a\to E$ has only $4$ choices as an element of $[\mb_{E,1}/\Aut(E)]$.

    \item If $g_1=1$ and $g_2=0$, so $D' \to E$ is in $\Du_{1,0}$. This case is identical to case~(c), except that $\eta_0:D_a\to E$ has only $4$ choices as an element of $[\mb_{E,1}/\Aut(E)]$.
    
\end{enumerate}
\end{enumerate}

In all cases, we see that $D$ has only finitely many preimages in $[\mb_{E,g}/\Aut(E)]$. Thus, the map from $[\mb_{E,g}/\Aut(E)]$ to $\Mb_g$ is generically quasi-finite on each boundary divisor.
\end{proof}

\bibliographystyle{alpha}
\bibliography{biblio}

\end{document}